\newcommand{\heute}{7 May 2015}
\theoremstyle{plain}
\newtheorem{theorem}{Theorem}[section]
\newtheorem{lemma}[theorem]{Lemma}
\newtheorem{corollary}[theorem]{Corollary}
\newtheorem{proposition}[theorem]{Proposition}
\theoremstyle{remark}
\newtheorem{remark}[theorem]{Remark}
\newtheorem{notation}[theorem]{Notation}
\newtheorem*{defn}{Definition}
\newtheorem*{rk}{Remark}
\newtheorem*{bsp}{Example}
\newtheorem*{notn}{Notation}
\newcommand{\bulit}{\item[$\bullet$]}
\newcommand{\enref}[1]{\textup{(\ref{enum:#1})}}
\newcommand{\dashTwo}[1]{\textup{(\ref{two}${}'$)}}
\newcommand{\ignore}[1]{}
\newcommand{\f}[1][p]{\mathbb{F}_{#1}}
\newcommand{\qq}{\mathbb{Q}}
\newcommand{\Gro}[1]{Gr\"ob\-ner}
\newcommand{\Ann}{\operatorname{Ann}}
\newcommand{\Aut}{\operatorname{Aut}}
\newcommand{\Hom}{\operatorname{Hom}}
\newcommand{\Id}{\operatorname{Id}}
\newcommand{\eps}{\varepsilon}
\DeclareMathOperator{\Bild}{Im}
\newcommand{\N}{\mathbb{N}}
\newcommand{\Z}{\mathbb{Z}}
\newcommand{\abs}[1]{\left|#1\right|}
\newcommand{\Aa}{\mathscr{A}}
\newcommand{\zz}{\mathbb{Z}}
\newcommand{\cc}{\mathbb{C}}
\newcommand{\bb}{\mathbb{B}}
\newcommand{\cbdry}{\Delta}
\newcommand{\coySeq}[1]{cocycle sequence}
\newcommand{\cohSeq}[1]{cochain sequence}
\newcommand{\CohSeq}[1]{Cochain sequence}
\DeclareMathOperator{\pro}{pro}
\DeclareMathOperator{\inc}{inc}
\DeclareMathOperator{\mul}{mul}
\DeclareMathOperator{\con}{con}
\tikzset{sArrow/.style={decoration={markings, mark=at position .5 with {\arrow{>}}}}}
\begin{document}

\title[Cochain sequences and the Quillen category]
      {Cochain sequences and the Quillen category of a coclass family}
\author[B.~Eick]{Bettina Eick}
\address{Institut Computational Mathematics \\
         TU Braunschweig \\ 38106 Braunschweig \\ Germany }
\email{beick@tu-bs.de}
\author[D.~J. Green]{David J. Green}
\address{Institute for Mathematics \\
         Friedrich-Schiller-Universit\"at Jena \\ 07737 Jena \\ Germany}
\email{david.green@uni-jena.de}
\thanks{Green received travel assistance from DFG grant GR 1585/6-1\@.}
\date{\heute}
% \date{\today}

\begin{abstract}
We introduce the concept of an infinite cochain sequence and initiate
a theory of homological algebra for them. We show how these
sequences simplify and improve the construction of infinite coclass 
families (as introduced by Eick and Leedham-Green) and how they apply in
proving that almost all groups in such a family have equivalent Quillen 
categories. We also include some examples of infinite families of $p$-groups
from different coclass families that have equivalent Quillen categories.
\end{abstract}

\maketitle

\section{Introduction}

\noindent
Coclass theory was initiated by Leedham-Green and Newman
\cite{LeedhamGreenNewman}. The fundamental aim of this theory is to classify
and investigate finite $p$-groups using the coclass as primary invariant.
The infinite coclass families of finite $p$-groups of fixed coclass as 
defined by Eick and Leedham-Green \cite{ELG08} are considered a step towards
these aims. Their definition is based on a splitting theorem for a certain 
type of second cohomology group.

Various interesting properties of the infinite coclass families have been 
determined. For example, the automorphism groups and the Schur multiplicators
of the groups in one family can be described simultaneously for all groups 
in the family, see \cite{Eic06b,Cou} and \cite{EFe10b}. It is conjectured 
that almost all groups in an infinite coclass family have isomorphic 
mod-$p$ cohomology rings. This conjecture is still open, but it is 
underlined by computational evidence obtained by Eick and 
§King~\cite{EickKing:IsoProblem} and by our earlier result \cite{quillen} 
saying that the Quillen categories of almost all groups in an infinite 
coclass family are equivalent. The proof of the latter theorem uses a 
splitting theorem for cohomology groups.
\medskip

\noindent
In this paper we derive a generalization of the splitting theorems
obtained and used in \cite{ELG08} and \cite{quillen},
and describe the splitting at the cocycle level.
Based on this, we introduce the concept of an infinite cochain
sequence and take the first steps towards the development of a theory of
homological algebra for them.

We then show that the infinite coclass families of~\cite{ELG08} can be defined
using the infinite cochain sequence. This way of defining the families is
more explicit than the definition in \cite{ELG08}, since it is based on
cocycles rather than just cohomology classes. This difference is significant; for
example, it is useful for the investigation of the Quillen categories of 
the groups in a coclass family. Further, we use the infinite cochain 
sequences to give a new, more conceptual proof of our main theorem in 
\cite{quillen} on the Quillen categories of the groups in an infinite 
coclass family. 

In final part of this paper we give some examples of groups from different 
coclass families with equivalent Quillen categories.  Let $q = p^{\ell}$ for
a prime $p$, let $\Z_p$ denote the $p$-adic integers and consider the 
irreducible action of $C_q$ on $T = \Z_p^{(p-1)p^{\ell-1}}$ (this is unique 
up to equivalence). Then $G_q = T \rtimes C_q$ is an infinite pro-$p$-group 
of coclass $\ell$. For $\ell = 1$ it is the unique infinite pro-$p$-group 
of coclass $1$ (or of maximal class) and for $\ell > 1$ it is an interesting 
example for an infinite pro-$p$-group of coclass $\ell$. 

The main line groups associated with an infinite pro-$p$-group $G$ of
coclass $r$ are the infinitely many lower central series quotients 
$G / \gamma_i(G)$ that have coclass $r$; this infinite sequence is not
necessarily a coclass family itself, but it consists of finitely many
different coclass families. The skeleton groups associated with 
an infinite pro-$p$-group $G$ of coclass $r$ form a significantly 
larger family of groups containing the main line groups and they play
an important role in coclass theory; we refer to 
\cite[Sec. 8.4]{LeedhamGreenMcKay:book}~or 
\cite[Sec. 3]{E-LG-N-OB} for details.

\begin{theorem} \ 
\begin{enumerate}
\item \label{enum:examplesThm-1}
For a arbitrary, fixed prime $p$, the Quillen categories of almost all 
main line groups associated with $G_p$ are pairwise equivalent.
\item \label{enum:examplesThm-2}
The Quillen categories of almost all skeleton groups associated with 
$G_9$ are pairwise equivalent.
\end{enumerate}
\end{theorem}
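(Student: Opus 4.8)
The plan is to reduce both parts to the coclass-family theorem reproved in this paper — that almost all groups in a single infinite coclass family have pairwise equivalent Quillen categories — and then to reconcile the finitely many families that arise. Thus for each part I would first exhibit the groups in question as a finite union of infinite coclass families and invoke the theorem within each, and then show that the stable Quillen categories of these finitely many families all coincide.

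\textbf{Part~(1).} Write $T=\Z_p^{p-1}$ with $C_p$ acting irreducibly, so that $\End_{C_p}(T)\cong\Z_p[\zeta_p]$, and let $\pi=\zeta_p-1$ be a uniformiser of this ring. The main line groups are the lower central quotients $G_p/\gamma_{n+1}(G_p)\cong(T/\pi^nT)\rtimes C_p$ of order $p^{n+1}$ and maximal class, and the isomorphism type of $T/\pi^nT$ as a $C_p$-module is periodic in $n$ with period $p-1$; hence the main line decomposes into at most $p-1$ infinite coclass families, and the theorem gives equivalences of Quillen categories for almost all members within each. For the reconciliation I would use that the whole poset of elementary abelian $p$-subgroups of $G_p/\gamma_{n+1}$ together with the conjugation action is visibly governed by the lattice $T$: since $1+\zeta_p+\dots+\zeta_p^{p-1}=0$ in $\Z_p[\zeta_p]$, every element outside $T/\pi^nT$ has order dividing $p$, and the largest elementary abelian subgroup contained in $T/\pi^nT$, namely $\Omega_1(T/\pi^nT)$, is for $n\ge p-1$ the unique indecomposable $\f C_p$-module of dimension $p-1$, independently of $n$. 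Enumerating the elementary abelian subgroups and their fusion in these terms, one checks that the resulting category does not depend on $n$ modulo $p-1$. (Alternatively, the cochain-sequence proof of the theorem already yields equivalences between the Quillen categories of $G_p/\gamma_{n+1}$ and $G_p/\gamma_{n+2}$ for almost all $n$, with no reference to the family decomposition, and one composes these.)

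\textbf{Part~(2).} For $G_9=\Z_3^6\rtimes C_9$ the argument runs in parallel. The skeleton groups of $G_9$ decompose into finitely many infinite coclass families (see the references cited above), so the theorem applies within each; and for the reconciliation I would identify the common stable Quillen category by the same method, the fusion now being controlled by the $C_9$-action on $T/3T$ — a single Jordan block of size $6$ — and on the pertinent quotients of $T$, these data being again eventually periodic in the relevant parameter. The step I expect to be the main obstacle is precisely this explicit identification of the elementary abelian subgroups and their fusion together with the verification that it is eventually periodic: a finite but delicate computation, heaviest in part~(2) because of the subgroup $C_3<C_9$ and the larger elementary abelian subgroups; and in both parts one must keep careful track of the finitely many small-order exceptions that ``almost all'' allows.
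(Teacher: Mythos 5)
Your overall plan --- split the main line (resp.\ the skeleton groups) into finitely many coclass families, apply the coclass-family theorem within each, and then reconcile across families --- is not how the paper argues, and both of your proposed routes to the reconciliation have problems. The parenthetical alternative is false as stated: the cochain-sequence machinery and Proposition~\ref{prop:main} compare the groups $(T/p^rN).P$ for varying $r$ with $N$ \emph{fixed}; in the coclass-one case the index of the module in $T=\mathcal{O}$ therefore jumps by $p-1$ at each step, so the theorem relates $G_p/\gamma_i$ to $G_p/\gamma_{i+p-1}$, not to $G_p/\gamma_{i+1}$. Consecutive main line quotients lie in different coclass families, and the paper's theory as set up does not produce a Quillen-category equivalence between them --- one would need a $\pi$-adic rather than a $p$-adic version of the splitting theorems of Section~2, which the paper does not develop. (Also a minor inaccuracy: $T/\pi^nT$ is not periodic in $n$ as a $C_p$-module, since its order is $p^n$; what is periodic is only the coclass-family membership.)

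Your main reconciliation route --- enumerate the elementary abelians and their fusion and check that the resulting category does not depend on $n$ mod $p-1$ --- is precisely where the entire content of the statement lies, and you only say ``one checks''. Once that direct verification is carried out, the appeal to the coclass-family theorem adds nothing; in fact the direct verification \emph{is} the paper's proof, and it makes no use of the family decomposition at all. For part~(1) the paper identifies the two kinds of order-$p$ elements in $G/\gamma_i(G)$ (those of the form $v\tau^r$ with $v\in\mathcal{O}$, $1\le r\le p-1$, and the elements of $\gamma_{i-p+1}(G)/\gamma_i(G)$), computes centralizers and conjugacy, and writes down a skeleton of the Quillen category in terms manifestly independent of $i\ge p+1$. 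For part~(2) it does the analogous but considerably heavier direct classification of elementary abelians in the twisted skeleton groups $R_{j-3,\gamma,m}$ and their fusion; you do not justify that these groups even decompose into finitely many coclass families (the dependence on the twisting cocycle $\gamma$ makes this much less transparent than for the main line), and the ``finite but delicate computation'' you flag as the main obstacle is not a detail to be filled in later --- it is the proof, and it is absent from your proposal.
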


\begin{proof}
\enref{examplesThm-1}: See Section~\ref{subsection:Gp} for odd~$p$; 
for $p=2$ the main line groups are the dihedral $2$-groups, and the 
result is well known, see e.g.\@ \cite[Sec. 9]{quillen}\@.
\item \enref{examplesThm-2}: See Section~\ref{subsection:G9}\@.
\end{proof}

\begin{rk}
Theorem \ref{enum:examplesThm-1} (1) can be made more explicit: the Quillen
categories of the quotients $G_p/\gamma_i(G_p)$ are equivalent for all
$i \geq p+1$. Note that $G_3/\gamma_i(G_3)$ have isomorphic mod-$3$ 
cohomology rings for $i \in \{5,6,7\}$, but the cohomology ring for $i=4$ is different,
see~\cite{EickKing:IsoProblem}.
\end{rk}

%%%%%%%%%%%%%%%%%%%%%%%%%%%%%%%%%%%%%%%%%%%%%%%%%%%%%%%%%%%%%%%%%%%%%%%%%%%%%
\section{Infinite cochain sequences}
\label{prelim-cohom}

\subsection{Preliminaries}

We work throughout with the normalized standard resolution, 
see~\cite[p.~8]{Evens:book}\@. That is, a cochain $f \in C^n(G,M)$ is a 
map $f \colon G^n \rightarrow M$ with the additional property that 
$f(g_1,\ldots,g_n)$ is zero if any $g_i$ is the identity element.

We denote the coboundary operator by $\cbdry \colon C^n(G,M) \rightarrow 
C^{n+1}(G,M)$. Recall that the coboundary of a $2$-cochain is given by
\[
\cbdry f(g_1,g_2,g_3) = {}^{g_1} f(g_2,g_3) - f(g_1g_2,g_3) + f(g_1,g_2g_3) - 
f(g_1,g_2)
\]
and more generally the coboundary of an $n$-cochain is
\begin{align*}
\cbdry f(g_1,\ldots,g_{n+1}) & = {}^{g_1} f(g_2,\ldots,g_{n+1}) \\ 
& \quad {} + \sum_{i=1}^n (-1)^i f(g_1,\ldots,g_{i-1},g_i g_{i+1},
g_{i+2},\ldots,g_{n+1}) \\ & \quad {} + (-1)^{n+1} f(g_1,\ldots,g_n) \, .
\end{align*}
The $n$-cocycles are the elements of
\[
Z^n(G,M) = \ker\bigl(C^n(G,M) \xrightarrow{\cbdry} C^{n+1}(G,M)\bigr) \, ,
\]
and the $n$-coboundaries are the elements of
\[
B^n(G,M) = \Bild\bigl(C^{n-1}(G,M) \xrightarrow{\cbdry} C^n(G,M)\bigr) \, .
\]
Since $\cbdry^2 = 0$ it follows that $B^n(G,M) \subseteq 
Z^n(G,M)$, and we set $H^n(G,M) = Z^n(G,M)/B^n(G,M)$. Elements of $H^n(G,M)$ 
are called cohomology classes; if $f$ is an $n$-cocycle, then its cohomology 
class is $f + B^n(G,M) \in H^n(G,M)$.

\begin{remark}
\label{remark:transfer}
By transfer theory, $\abs{G} \cdot H^n(G,M) = 0$ for all $n \geq 1$: see e.g.\@ \cite[Proposition 3.6.17]{Benson:I}\@.
\end{remark}

%%%%%%%%%%%%%%%%%%%%%%%%%%%%%%%%%%%%%%%%%%%%%%%%%%%%%%%%%%%%%%%%%%%%%%%%%%%%%
\subsection{Splitting theorems}

From now on for the remainder of this section we fix the following notation.

\begin{notation}
\label{setup}
Let $G$ be a finite $p$-group with $m = \log_p(|G|)$, let $R$ be a 
commutative ring, let $M$ an $RG$-module and let $N$ be a submodule of
$M$ with $\Ann_N(p) = \{0\}$. 
\end{notation}

We need the following generalization of \cite[Theorem~7]{quillen}, which is 
itself a generalization of \cite[Theorem~18]{ELG08}. 

\begin{theorem}
\label{thm18}
We use Notation \ref{setup} and let $n \geq 1$ and $r \geq 2m$. Then there 
is a splitting
\[ H^n(G,M/p^rN) \cong H^n(G,M) \oplus H^{n+1}(G,N) \]
which is natural with respect to restriction to subgroups of~$G$.
\end{theorem}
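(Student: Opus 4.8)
The plan is to realize the splitting through the long exact cohomology sequence of a short exact sequence of coefficient modules, together with an explicit cochain-level section of the resulting connecting homomorphism. Since $\Ann_N(p) = \{0\}$, multiplication by $p^r$ is injective on $N$, so $\mu_r \colon N \to p^r N$, $x \mapsto p^r x$, is an isomorphism of $RG$-modules; I will work with the short exact sequence $0 \to p^r N \hookrightarrow M \xrightarrow{\pi} M/p^r N \to 0$.

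First I would show that the map $H^k(G, p^r N) \to H^k(G, M)$ induced by inclusion vanishes for every $k \geq 1$, provided $r \geq m$. The inclusion $p^r N \hookrightarrow M$ factors through $N$, and the composite $N \xrightarrow{\mu_r} p^r N \hookrightarrow N$ is multiplication by $p^r$; on $H^k(G,N)$ this induces multiplication by $p^r$, which is zero for $k \geq 1$ and $r \geq m$ by Remark~\ref{remark:transfer}. As $(\mu_r)_*$ is an isomorphism, $H^k(G, p^r N) \to H^k(G, N)$ is zero, hence so is $H^k(G, p^r N) \to H^k(G, M)$. Taking $k = n$ and $k = n+1$, the long exact sequence collapses to
\[
0 \to H^n(G, M) \xrightarrow{\pi_*} H^n(G, M/p^r N) \xrightarrow{\delta'} H^{n+1}(G, N) \to 0 ,
\]
where $\delta'$ is the connecting homomorphism followed by the isomorphism $(\mu_r)_*^{-1} \colon H^{n+1}(G, p^r N) \to H^{n+1}(G, N)$.

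It remains to split this sequence naturally, and this is where the hypothesis $r \geq 2m$ is used. Given a cocycle $h \in Z^{n+1}(G, N)$, transfer gives $p^m [h] = 0$, so I may pick $u \in C^n(G, N)$ with $\cbdry u = p^m h$ and set $f = p^{r-m} u \in C^n(G, N) \subseteq C^n(G, M)$; then $\cbdry f = p^r h$ has values in $p^r N$, so $\pi \circ f$ is an $n$-cocycle for $M/p^r N$, and I define $t[h] = [\pi \circ f]$. One checks at once that $\delta'(t[h]) = [h]$. For well-definedness, two choices of $u$ differ by some $z \in Z^n(G, N)$, so the two candidate values of $t[h]$ differ by the image in $H^n(G, M/p^r N)$ of $p^{r-m}[z] \in H^n(G, N)$, which is zero because $p^m$ annihilates $H^n(G,N)$ and $r - m \geq m$ --- this is precisely where $r \geq 2m$, rather than merely $r \geq m$, is needed; replacing $h$ by a cohomologous cocycle is handled similarly (and, for the corresponding choice of $u$, even leaves $\pi \circ f$ unchanged). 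Thus $H^n(G, M/p^r N) = \pi_* H^n(G,M) \oplus t\, H^{n+1}(G, N) \cong H^n(G,M) \oplus H^{n+1}(G, N)$.

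For naturality under restriction to a subgroup $U \leq G$, the maps $\pi_*$, the connecting homomorphism, and $(\mu_r)_*$ are all compatible with restriction by functoriality and the standard naturality of connecting maps, so it suffices to show $\mathrm{res}^G_U \circ t_G = t_U \circ \mathrm{res}^G_U$. Writing $m_U = \log_p |U| \leq m$, both $\mathrm{res}^G_U(t_G[h])$ and $t_U(\mathrm{res}^G_U[h])$ are represented by cochains in $p^{r-m} C^n(U, N)$ whose coboundary is $p^r \mathrm{res}^G_U(h)$; any two such cochains differ by an element of $p^{r-m} Z^n(U, N)$, whose image in $H^n(U, M/p^r N)$ vanishes since $p^{r-m}$ annihilates $H^n(U, N)$ (using $r - m \geq m \geq m_U$). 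I expect this last step to be the main obstacle: over $U$ the natural exponent is $m_U$ rather than $m$, so one must check that the bound $r \geq 2m$ still absorbs the discrepancy between the constructions over $G$ and over $U$. The collapse of the long exact sequence and the construction of $t$ itself are comparatively routine.
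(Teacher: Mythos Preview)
Your proof is correct and follows essentially the same route as the paper. The paper's own proof is little more than a citation of \cite[Theorem~7]{quillen}, using the short exact sequence $0 \to N \xrightarrow{i_r} M \to M/p^r N \to 0$ with $i_r(x)=p^r x$; your use of $0 \to p^r N \hookrightarrow M \to M/p^r N \to 0$ together with the isomorphism $\mu_r$ is the same thing in slightly different packaging. Your explicit cochain-level section $t$ is precisely what the paper spells out immediately afterwards in Proposition~\ref{prop:splittingCocycleLevel} (take $\rho=0$ there), so you have effectively merged the paper's Theorem~\ref{thm18} and the relevant part of Proposition~\ref{prop:splittingCocycleLevel} into a single argument. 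The naturality check, including the observation that over $U$ the difference of the two candidate representatives lies in $p^{r-m} Z^n(U,N)$ (using $\Ann_N(p)=0$ to cancel the $p^{r-m}$) and hence dies because $r-m \geq m \geq m_U$, is exactly the point the paper leaves implicit in its appeal to~\cite{quillen}.
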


\begin{notn}
Projection $M \twoheadrightarrow M/p^r N$ induces maps $C^n(G,M) 
\twoheadrightarrow C^n(G,M/p^r N)$ of cochain modules and $H^n(G,M) 
\rightarrow H^n(G,M/p^r N)$ of cohomology modules. We shall denote 
all three maps by $\pro_r$.
\end{notn}

\begin{proof}
Recall that $\abs{G} = p^m$ and define $i_r \colon N \rightarrow M : x 
\mapsto p^r x$. Consider the long exact sequence in group cohomology
induced by the following short exact sequence of coefficient modules
\[
\begin{CD}
0 @>>> N @>{i_r}>> M @>{\pro_r}>> M/p^r N @>>> 0 \\
\end{CD} \, .
\]
The proof of \cite[Theorem~7]{quillen} readily generalizes, showing that 
if $n \geq 0$ and $r \geq 2m$ then
\[
\begin{CD}
0 @>>> H^n(G,M) @>{\pro_r}>> H^n (G, M/p^rN) @>{\con_r}>> H^{n+1}(G,N) @>>> 0 \\
\end{CD}
\]
is a split short exact sequence, where $\con_r$ is the connecting homomorphism.
\end{proof}

\noindent
We now describe how Theorem~\ref{thm18} works at the cocycle level.

\begin{proposition}
\label{prop:splittingCocycleLevel}
We use Notation \ref{setup} and let $n \geq 1$, pick $\rho \in Z^n(G,M)$ 
and $\eta \in Z^{n+1}(G,N)$. 
\begin{enumerate}
\item
\label{enum:sCL-1}
There is a (not necessarily unique) $n$-cochain $\sigma \in C^n(G,N)$ 
such that $\cbdry(\sigma) = p^m \eta$.
\item
\label{enum:sCL-2}
For every $r \geq m$ and for every choice of $\sigma$ in~\enref{sCL-1},
the induced cochain $\pro_r(\rho + p^{r-m} \sigma)$ lies in $Z^n(G,M/p^rN)$. 
\item
\label{enum:sCL-3}
For every $r \geq 2m$ and for every choice of $\sigma$ in~\enref{sCL-1}, the cohomology class
\[
\pro_r(\rho + p^{r-m} \sigma) + B^n(G, M/p^rN) \in H^n(G, M/p^r N)
\]
is the unique class corresponding via the isomorphism
of Theorem~\ref{thm18} to 
\[
\left(\rho + B^n(G,M),\eta+B^{n+1}(G,N) \right) 
  \in H^n(G,M) \oplus H^{n+1}(G,N) \, .
\]
\end{enumerate}
\end{proposition}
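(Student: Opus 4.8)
The plan is to prove the three parts in order, tracking explicitly how the splitting of Theorem~\ref{thm18} is realized by the connecting homomorphism $\con_r$. For part \enref{sCL-1}, recall from Remark~\ref{remark:transfer} that $\abs{G} \cdot H^{n+1}(G,N) = 0$, i.e.\ $p^m \eta \in B^{n+1}(G,N)$; any choice of $\sigma \in C^n(G,N)$ with $\cbdry(\sigma) = p^m\eta$ does the job, and such a $\sigma$ exists precisely by this vanishing. I would note that $\sigma$ is well defined only up to $Z^n(G,N)$.

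For part \enref{sCL-2}, the point is to compute $\cbdry$ of the cochain $\rho + p^{r-m}\sigma$, where I silently use $i_r$ to regard $p^{r-m}\sigma \in C^n(G,M)$ once $r \ge m$. Since $\cbdry\rho = 0$ and $\cbdry(p^{r-m}\sigma) = p^{r-m}\cbdry\sigma = p^{r-m}\cdot p^m\eta = p^r\eta$, we get $\cbdry(\rho + p^{r-m}\sigma) = p^r\eta = p^r\,i_0(\eta)$, which maps to $0$ under $\pro_r \colon C^{n+1}(G,M) \to C^{n+1}(G,M/p^rN)$ because it lies in $p^rN \subseteq p^rM$ — more precisely, it lies in the image of $i_r\colon N \to M$ composed with $p^r$, hence in $\ker\pro_r$. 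Therefore $\pro_r(\rho + p^{r-m}\sigma)$ is a cocycle in $Z^n(G,M/p^rN)$, as claimed.

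For part \enref{sCL-3}, I would unwind the definition of the connecting homomorphism $\con_r$ attached to the short exact sequence $0 \to N \xrightarrow{i_r} M \xrightarrow{\pro_r} M/p^rN \to 0$. Given the cocycle $\pro_r(\rho + p^{r-m}\sigma) \in Z^n(G,M/p^rN)$, a lift to $C^n(G,M)$ is $\rho + p^{r-m}\sigma$ itself; its coboundary is $p^r\eta$, which by the computation above is $i_r(p^{r-m}\cdot p^m \cdot \text{(something)})$ — more cleanly, $p^r\eta = i_r(\eta)$ is not literally right, so I would instead observe that $p^r\eta$ lies in the image of $i_r$ via $\eta \mapsto i_r$ applied after multiplication: $p^r\eta = i_r(\eta)$ holds by definition of $i_r$ as multiplication by $p^r$ composed with the inclusion — wait, $i_r(x) = p^r x$, so indeed $i_r(\eta) = p^r\eta$ in $M$. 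Hence the connecting homomorphism sends our class to $[\eta] = \eta + B^{n+1}(G,N)$. Simultaneously, applying $\pro_r$ (the cohomology-level map) to $[\rho]$ gives $\pro_r(\rho) + B^n(G,M/p^rN)$, and since $p^{r-m}\sigma \in i_r(C^n(G,N))$ only up to the cocycle ambiguity, one checks that modulo the image of $H^n(G,M)$ under $\pro_r$ the class is unchanged — but the cleaner route is: the split exact sequence of Theorem~\ref{thm18} identifies $H^n(G,M/p^rN)$ with $H^n(G,M) \oplus H^{n+1}(G,N)$ via $(\pro_r, \con_r)$, so I need only check that our class maps to $\rho + B^n(G,M)$ in the first factor and $\eta + B^{n+1}(G,N)$ in the second. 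The second factor was just computed. For the first factor, I would use that the splitting map $s\colon H^n(G,M) \to H^n(G,M/p^rN)$ constructed in the proof of Theorem~\ref{thm18} is $\pro_r$ itself, so the first component of $[\pro_r(\rho + p^{r-m}\sigma)]$ is $[\rho]$ iff $\pro_r(p^{r-m}\sigma)$ is a coboundary in $H^n(G,M/p^rN)$ coming from the image of $i_r$ at cochain level — and indeed $p^{r-m}\sigma = i_r(p^{-m}\cdot\ldots)$ is not integral, so here I must argue more carefully: $\pro_r\circ i_r = 0$ forces $\pro_r(p^{r-m}\sigma)$ to be controlled by $\sigma \bmod p^m$. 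The \textbf{main obstacle} is precisely this bookkeeping in part \enref{sCL-3}: showing the first component is exactly $[\rho]$, which I expect requires re-examining the explicit splitting built in the proof of Theorem~\ref{thm18} (via \cite[Theorem~7]{quillen}) rather than treating it as a black box, since the statement "$r \ge 2m$" versus "$r \ge m$" in parts \enref{sCL-2} and \enref{sCL-3} signals that the extra factor of $p^m$ is used to absorb $p^{r-m}\sigma$ into the part of the sequence that $\con_r$ splits off cleanly.
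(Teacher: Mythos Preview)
Your proofs of \enref{sCL-1} and \enref{sCL-2} are correct and match the paper's argument exactly. In \enref{sCL-3} your computation of the $\con_r$-component is also correct: lifting $\pro_r(\rho+p^{r-m}\sigma)$ to $\rho+p^{r-m}\sigma$, applying $\cbdry$, and observing $p^r\eta=i_r(\eta)$ gives $[\eta]$ in $H^{n+1}(G,N)$, just as the paper does.

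The gap is in the first component, and it stems from a direction error. You write that the isomorphism of Theorem~\ref{thm18} identifies $H^n(G,M/p^rN)$ with $H^n(G,M)\oplus H^{n+1}(G,N)$ ``via $(\pro_r,\con_r)$''. But $\pro_r$ goes the wrong way: it is the \emph{section} $H^n(G,M)\to H^n(G,M/p^rN)$, not a retraction. To read off the first component of your class you need the explicit left inverse of $\pro_r$, and knowing only that $\pro_r$ is a split monomorphism does not determine it. Your subsequent attempt (``the first component is $[\rho]$ iff $\pro_r(p^{r-m}\sigma)$ is a coboundary'') is therefore not well-posed: even if $\pro_r(p^{r-m}\sigma)$ were a coboundary, that would show the class lies in the image of $\pro_r$, not that the retraction sends it to $[\rho]$ specifically.

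The missing ingredient, which the paper extracts from the proof of \cite[Theorem~7]{quillen}, is that the retraction is the composite
\[
H^n(G,M/p^rN)\xrightarrow{\pi_*}H^n(G,M/p^{r-m}N)\xrightarrow{(\pro_{r-m})^{-1}}H^n(G,M),
\]
where $\pi\colon M/p^rN\to M/p^{r-m}N$ is the further projection. Now the bookkeeping is immediate: $\pi_*\pro_r(\rho+p^{r-m}\sigma)=\pro_{r-m}(\rho+p^{r-m}\sigma)=\pro_{r-m}(\rho)$, since $p^{r-m}\sigma$ takes values in $p^{r-m}N$ and hence dies under $\pro_{r-m}$. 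This is precisely where the hypothesis $r\geq 2m$ enters: one needs $r-m\geq m$ so that $\pro_{r-m}$ is injective on cohomology and the inverse in the retraction makes sense. Your instinct that the extra factor of $p^m$ ``absorbs'' $p^{r-m}\sigma$ was right; you were just missing the concrete map that does the absorbing.
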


\begin{proof}
\enref{sCL-1}: $p^m \eta$ is a coboundary, since $p^m H^{n+1}(G,N) = 0$ by Remark~\ref{remark:transfer}\@.
\item \enref{sCL-2}: $\pro_r$ and $\cbdry$ commute, and $\cbdry(\rho + p^{r-m} \sigma) = p^r \eta$ lies in the kernel of $\pro_r$.
\item
\enref{sCL-3}: 
The proof of \cite[Theorem~7]{quillen} says that the component maps of the isomorphism $H^n(G,M/p^r N) \rightarrow H^n(G,M) \oplus H^{n+1}(G,N)$ are the connecting homomorphism
$\con_r \colon H^n (G, M/p^rN) \rightarrow H^{n+1}(G,N)$ and the map
\[
H^n(G,M/p^r N) \xrightarrow{\pi_*} H^n(G,M/p^{r-m}N) \xrightarrow{(\pro_{r-m})^{-1}} H^n(G,M) \, ,
\]
where $\pi \colon M/p^rN \rightarrow M/p^{r-m}N$ is the projection map $x+p^r N \mapsto x + p^{r-m} N$. As
\[
\pi_* \pro_r (\rho + p^{r-m} \sigma) = \pro_{r-m}(\rho + p^{r-m} \sigma) = \pro_{r-m}(\rho) \, ,
\]
the image in $H^n(G,M)$ is $\rho + B^n(G,M)$.

Recall from e.g.~the proof of \cite[Theorem 9.1.5]{LeedhamGreenMcKay:book} that $\con_r$ is constructed as the composition
\[
Z^n(G,M/p^r N) \xrightarrow{(\pro_r)^{-1}} C^n(G,M) \xrightarrow{\Delta} Z^{n+1}(G,M) \xrightarrow{(i_r)_*^{-1}} Z^{n+1}(G,N) \, ,
\]
with $i_r$ as in the proof of Theorem~\ref{thm18}\@.
So $\pro_r(\rho + p^{r-m} \sigma) \mapsto \rho + p^{r-m} \sigma \mapsto 0 + p^r \eta = i_r(\eta) \mapsto \eta$. Uniqueness follows.
\end{proof}

%%%%%%%%%%%%%%%%%%%%%%%%%%%%%%%%%%%%%%%%%%%%%%%%%%%%%%%%%%%%%%%%%%%%%%%%%%%%%
\subsection{The definition of \cohSeq.s}
\label{CohSeqs-def}

\noindent
Using the ideas of Proposition~\ref{prop:splittingCocycleLevel} we now define
infinite \cohSeq.s. 

\begin{defn}
We use Notation \ref{setup} and let $n \geq 1$ and $r_0 \geq 0$. 
We call a sequence $(\alpha_r)_{r \geq r_0}$ of cochains $\alpha_r \in 
C^n(G, M/p^r N)$ a \emph{\cohSeq.} if there are cochains $\rho \in C^n(G,M)$ 
and $\sigma \in C^n(G,N)$ and an $\omega \in \{0,1,\ldots, r_0\}$ such that
\[
\alpha_r = \pro_r(\rho + p^{r-\omega} \sigma)  \in C^n(G, M/p^r N) \quad 
      \text{for all $r \geq r_0$.}
\]
\end{defn}

Note that the \cohSeq.s defined by $(\rho, \sigma; r_0,\omega)$ and 
$(\rho', \sigma'; r'_0,\omega')$ are equal if and only if $r_0 = r'_0$ and
the cochains $\pro_r(\rho + p^{r-\omega} \sigma)$ and $\pro_r(\rho' + 
p^{r-\omega'} \sigma')$ are equal as elements of $C^n(G, M/p^r N)$ for all
$r \geq r_0$.

Often $r_0$ will be clear from the context. We then also write 
$\alpha_{\bullet}$ for $( \alpha_r \mid r \geq r_0)$ and $M/p^\bullet N$ 
for $(M/p^rN \mid r \geq r_0)$. If $\alpha_\bullet$ is induced from
$(\rho, \sigma; r_0, \omega)$, then we also write 
\[ 
\alpha_{\bullet} = \pro_{\bullet}(\rho + p^{\bullet-\omega} \sigma) \] 

\begin{notn}
Often $r_0$ and $N$ will be fixed from the context. We then denote 
$M_r = M/p^rN$, we write $M_{\bullet}$ for $(M_r \mid r \geq r_0 )$, and
we denote with $\cc^n_{r_0}(G,M_{\bullet})$ the set of all \cohSeq.s which 
start at $r_0$.
\end{notn}

%%%%%%%%%%%%%%%%%%%%%%%%%%%%%%%%%%%%%%%%%%%%%%%%%%%%%%%%%%%%%%%%%%%%%%%%%%%%%
\subsection{Homological algebra for \cohSeq.s}
\label{CohSeqs}

\noindent
Our next aim is to develop some elementary homological algebra for 
\cohSeq.s. 

\begin{notation}
\label{setup2}
We continue to use the Notation~\ref{setup}, imposing minor 
additional restrictions. We assume from now on that $R$ is a noetherian 
integral domain and $p$ a prime number, which in $R$ is neither zero nor 
a unit. Further, $M$ is a finitely generated $RG$-module which is free as 
an $R$-module. Then $\bigcap_r p^r M = \{0\}$ by Krull's 
Theorem~\cite[10.17]{AtiMac}, and $\Ann_N(p) = \{0\}$.
% If $R$ is a PID, then $N$ is a free $RG$-module too.
\end{notation}

\begin{lemma}
\label{lemma:moduleCochainSeq}
The set $\cc^n_{r_0}(G,M_{\bullet})$ of \cohSeq.s is an $R$-module.
\end{lemma}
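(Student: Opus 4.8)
The plan is to verify the $R$-module axioms directly, taking advantage of the fact that each $\cc^n_{r_0}(G,M_\bullet)$ sits inside the product $\prod_{r \geq r_0} C^n(G,M/p^rN)$, which is already an $R$-module under componentwise operations. So the real content is to show that $\cc^n_{r_0}(G,M_\bullet)$ is closed under addition and scalar multiplication, and contains the zero sequence; closure and the presence of $0$ then immediately give a submodule, hence an $R$-module.

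First I would fix notation: suppose $\alpha_\bullet$ is induced from $(\rho,\sigma;r_0,\omega)$ and $\beta_\bullet$ from $(\rho',\sigma';r_0,\omega')$, where without loss of generality both sequences start at the same $r_0$ (two \cohSeq.s with different $r_0$ are not elements of the same set $\cc^n_{r_0}$). The zero sequence is induced by $(0,0;r_0,0)$. For scalar multiplication by $c \in R$, since $\pro_r$ is $R$-linear and commutes with multiplication by powers of $p$, we have $c\alpha_r = \pro_r(c\rho + p^{r-\omega}(c\sigma))$, so $c\alpha_\bullet$ is the \cohSeq\ induced by $(c\rho, c\sigma; r_0, \omega)$.

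The main obstacle is addition, because $\alpha_\bullet$ and $\beta_\bullet$ may have different values of $\omega$; say $\omega \leq \omega'$. The naive sum gives $\alpha_r + \beta_r = \pro_r\bigl((\rho+\rho') + p^{r-\omega}\sigma + p^{r-\omega'}\sigma'\bigr)$, and the two $\sigma$-terms have different powers of $p$ in front. The fix is to rewrite $p^{r-\omega}\sigma = p^{r-\omega'}(p^{\omega'-\omega}\sigma)$, since $\omega' - \omega \geq 0$, so that
\[
\alpha_r + \beta_r = \pro_r\bigl((\rho+\rho') + p^{r-\omega'}(p^{\omega'-\omega}\sigma + \sigma')\bigr) \, ,
\]
exhibiting $\alpha_\bullet + \beta_\bullet$ as the \cohSeq\ induced by the data $(\rho+\rho', \, p^{\omega'-\omega}\sigma + \sigma'; \, r_0, \, \omega')$. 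One must check that $\omega' \in \{0,1,\ldots,r_0\}$, which holds since $\omega' \leq r_0$ by hypothesis on $\beta_\bullet$. With closure under addition and scalar multiplication established, the remaining module axioms (associativity, distributivity, etc.) are inherited from the ambient product module $\prod_{r \geq r_0} C^n(G,M/p^rN)$, so no further work is needed.
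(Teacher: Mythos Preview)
Your proof is correct and follows essentially the same approach as the paper: both arguments establish closure under addition by rewriting the sum using the larger of the two $\omega$-values (the paper writes $\ell = \max(\omega,\omega')$ rather than assuming $\omega \leq \omega'$), and both handle scalar multiplication by $(c\rho,c\sigma;r_0,\omega)$. Your framing via the ambient product module $\prod_{r \geq r_0} C^n(G,M/p^rN)$ and your explicit check that $\omega' \leq r_0$ are nice clarifications, but the substance is the same.
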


\begin{proof}
Let $\alpha_{\bullet}$ be defined by $(\rho,\sigma;r_0,\omega)$, and $\beta_{\bullet}$ by $(\rho',\sigma';r_0,\omega')$. Then
\[
\alpha_r + \beta_r = \pro_r\left(\rho + \rho' + p^{r-\ell}(p^{\ell-\omega} \sigma + p^{\ell-\omega'} \sigma')\right) \quad \text{for $\ell = \max(\omega,\omega')$,}
\]
and so $\alpha_{\bullet} + \beta_{\bullet}$ is the \cohSeq. defined by $(\rho+\rho',p^{\ell-\omega} \sigma + p^{\ell-\omega'} \sigma';r_0,\ell)$. And for $x \in R$, $x \alpha_{\bullet}$ is the \cohSeq. defined by $(x\rho;x\sigma;r_0,\omega)$.
\end{proof}

\begin{lemma}
\label{lemma:identityCochainSeq}
Let $\alpha_{\bullet} \in \cc^n_{r_0}(G,M_{\bullet})$ be the \cohSeq. defined by $(\rho;\sigma;r_0,\omega)$.
\begin{enumerate}
\item
\label{enum:iCS-1}
Either $\alpha_{\bullet} = 0$, that is $\alpha_r = 0$ for all $r \geq r_0$; or $\alpha_r \neq 0$ for all sufficiently large~$r$.
\item
\label{enum:iCS-2}
$\alpha_{\bullet} = 0$ if and only if $\rho = 0$ in $C^n(G,M)$ and $\sigma$ lies in $p^{\omega} C^n(G,N)$.
\end{enumerate}
\end{lemma}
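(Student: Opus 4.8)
The plan is to analyse when the cochain sequence $\alpha_\bullet$ vanishes by working one term at a time and then letting $r \to \infty$. First I would prove the equivalence in~\enref{iCS-2}, and then deduce~\enref{iCS-1} from it. For the ``if'' direction of~\enref{iCS-2}: if $\rho = 0$ and $\sigma = p^\omega \tau$ for some $\tau \in C^n(G,N)$, then $\rho + p^{r-\omega}\sigma = p^{r-\omega} p^\omega \tau = p^r \tau$, and this maps to $0$ under $\pro_r$ because $p^r \tau$ lies in $p^r N$ pointwise; hence $\alpha_r = \pro_r(p^r\tau) = 0$ for every $r \geq r_0$, so $\alpha_\bullet = 0$.

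For the ``only if'' direction, suppose $\alpha_r = \pro_r(\rho + p^{r-\omega}\sigma) = 0$ for all $r \geq r_0$. Unwinding the definition of $\pro_r$, this says that for every $r$ the cochain $\rho + p^{r-\omega}\sigma$ takes values in $p^r N \subseteq M$; that is, $\rho(g_1,\dots,g_n) + p^{r-\omega}\sigma(g_1,\dots,g_n) \in p^r N$ for every tuple $(g_1,\dots,g_n) \in G^n$ and every $r \geq r_0$. Fix such a tuple and write $x = \rho(\dots)$, $y = \sigma(\dots)$, so $x + p^{r-\omega} y \in p^r N$ for all $r$. Subtracting the relations for $r$ and $r+1$ gives $p^{r-\omega}y - p^{r+1-\omega}y = p^{r-\omega}y(1-p) \in p^r N + p^{r+1} N = p^r N$; since $1 - p$ is a unit in the local-at-$p$ sense here, or more directly since $R$ is a domain in which $p$ is not a unit and $N$ is $p$-torsion-free with $\bigcap_r p^r N = \{0\}$ (inherited from $\bigcap_r p^r M = \{0\}$ of Notation~\ref{setup2} together with freeness of $M$ over $R$), I want to conclude $p^{r-\omega} y \in p^r N$, i.e.\ $y \in p^\omega N$; and then, substituting back, $x \in p^r N$ for all $r$, forcing $x = 0$ by $\bigcap_r p^r N = \{0\}$. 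Carrying this out for every tuple shows $\rho = 0$ in $C^n(G,M)$ and $\sigma \in p^\omega C^n(G,N)$ (the latter because $G^n$ is finite, so finitely many divisibility conditions can be met simultaneously — here using that $C^n(G,N)$ is a finitely generated free $R$-module when $M$, hence $N$, is).

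For~\enref{iCS-1}: if $\alpha_\bullet \neq 0$, then by~\enref{iCS-2} we do \emph{not} have both $\rho = 0$ and $\sigma \in p^\omega C^n(G,N)$; I claim $\alpha_r \neq 0$ for all large $r$. If $\rho \neq 0$, pick a tuple with $x := \rho(\dots) \neq 0$; since $\bigcap_r p^r M = \{0\}$, there is $r_1$ with $x \notin p^{r_1} M$, and for $r \geq \max(r_1, \omega + r_1)$ the value $x + p^{r-\omega}y$ is congruent to $x$ modulo $p^{r_1} N$-ish and in particular lies outside $p^r N$, so $\alpha_r \neq 0$. If $\rho = 0$ but $\sigma \notin p^\omega C^n(G,N)$, pick a tuple with $y := \sigma(\dots) \notin p^\omega N$; then $p^{r-\omega} y \notin p^r N$ (using $p$-torsion-freeness of $N$), so again $\alpha_r = \pro_r(p^{r-\omega}\sigma) \neq 0$ for every $r \geq r_0$. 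Either way the conclusion holds. The main obstacle I anticipate is the little commutative-algebra step in~\enref{iCS-2}: passing cleanly from ``$x + p^{r-\omega}y \in p^r N$ for all $r$'' to ``$y \in p^\omega N$ and $x = 0$'' using only that $R$ is a noetherian domain, $p$ a nonzero nonunit, and $N$ is $p$-torsion-free and $p$-adically separated — one must be slightly careful that the relevant separation and torsion-freeness really are inherited by $N$ and by the cochain modules $C^n(G,N)$, but this follows from Notation~\ref{setup2} since $N \subseteq M$ is a submodule of a finitely generated free $R$-module and $C^n(G,N) \cong N^{(G^n \setminus \text{degenerate tuples})}$ is again such a module.
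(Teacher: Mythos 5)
Your overall strategy is sound and the statement is true, but there is a real gap in rigor at the crucial step, and the route you chose is more complicated than necessary. The paper's own proof goes in the opposite direction: it proves the contrapositive of the ``only if'' part directly, splitting into the case $\rho \neq 0$ (then $\pro_k(\rho)\neq 0$ for some $k$ by Krull separability of $M$, giving $\alpha_r\neq 0$ for $r\geq k+\omega$) and the case $\rho=0$ with $\sigma\notin p^\omega C^n(G,N)$ (then torsion-freeness gives $\alpha_r\neq 0$ for every $r$), which yields both \enref{iCS-1} and \enref{iCS-2} at once in two lines. Your forward argument can be made to work, but you need to repair it.

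The gap is in the step from $p^{r-\omega}(1-p)y \in p^r N$ to $y\in p^\omega N$. You write ``since $1-p$ is a unit in the local-at-$p$ sense here, or more directly \dots\ I want to conclude,'' but neither justification is valid as stated: $R$ is only assumed to be a noetherian domain, not local, so $1-p$ need not be a unit in $R$, and your ``more directly'' reasoning (torsion-freeness plus $p$-adic separation) does not by itself turn $(1-p)y\in p^\omega N$ into $y\in p^\omega N$. The implication \emph{is} true, but for a reason you have to spell out: $1-p$ is a unit in $R/p^\omega R$, with inverse $1+p+\cdots+p^{\omega-1}$, so from $(1-p)y\in p^\omega N$ one gets $y = (1+p+\cdots+p^{\omega-1})(1-p)y + p^\omega y \in p^\omega N$. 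Without this identity the step is unjustified. Incidentally, the subtraction maneuver is an unnecessary complication: from $x + p^{r-\omega}y \in p^r N$ and $p^{r-\omega}y\in p^{r-\omega}M$ you get $x \in p^{r-\omega}M$ for every $r\geq r_0$, hence $x=0$ by Krull's Theorem applied to $M$; then the single relation $p^{r_0-\omega}y\in p^{r_0}N$ cancels (torsion-freeness) to $y\in p^\omega N$, with no appeal to $1-p$ at all. Finally, a small point: a submodule $N$ of a finitely generated free module over a noetherian domain need not itself be free, so your parenthetical ``$C^n(G,N)$ is a finitely generated free $R$-module when $M$, hence $N$, is'' is false as stated; fortunately it is also unnecessary, since $p^\omega C^n(G,N)$ equals the set of cochains with pointwise values in $p^\omega N$ simply because cochains are functions and $N$ is $p$-torsion-free.
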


\begin{proof}
If $\rho = 0$ and $\sigma$ is not divisible by $p^{\omega}$, then $p^{r-\omega} \sigma$ is not divisible by $p^r$, and so $\alpha_r$ is non-zero for all~$r$.
If $\rho \neq 0$ then there is $k$ such  that $\pro_k(\rho) \neq 0$ in $C^n(G,M/p^k N)$, and hence $\alpha_r \neq 0$ for all $r \geq k+\omega$.
\end{proof}

\begin{notn}
Let $\alpha_{\bullet} \in \cc^n_{r_0}(G,M_{\bullet})$. We define the 
\emph{level} of $\alpha_{\bullet}$ to be the smallest value of $\omega$ 
such that $\alpha_{\bullet}$ is defined by $(\rho,\sigma;r_0,\omega)$ for 
some $\rho,\sigma$.
\end{notn}

\begin{rk}
By definition, the \cohSeq. defined by $(\rho,\sigma;r_0,\omega)$ has level 
at most $\omega$. Note that $(\rho,\sigma;r_0,\omega)$ and 
$(\rho,p\sigma;r_0,\omega+1)$ define the same \cohSeq.. Thus the level of 
the \cohSeq. defined by $(\rho, \sigma; r_0, \omega)$ can be strictly 
smaller than $\omega$.
\end{rk}

\begin{defn}
Define $\cc^n_{r_0}(G,M_{\bullet}) \stackrel{\cbdry}{\rightarrow} 
\cc^{n+1}_{r_0}(G,M_{\bullet})$ by
$(\cbdry \alpha)_r := \cbdry(\alpha_r)$. That is, if 
$\alpha_{\bullet}$ is defined by $(\rho,\sigma;r_0,\omega)$, then 
$\Delta(\alpha_{\bullet})$ is defined by 
$(\Delta(\rho),\Delta(\sigma);r_0,\omega)$. 
Further, write $\zz^n_{r_0}(G,M_{\bullet}) = \ker(\cbdry)$ and 
$\bb^{n+1}_{r_0}(G,M_{\bullet}) = \Bild(\cbdry)$. 
\end{defn}

\noindent
The map $\cbdry$ is $R$-linear and satisfies $\cbdry^2 = 0$. Thus 
$\bb^n_{r_0}(G,M_{\bullet}) \subseteq \zz^n_{r_0}(G,M_{\bullet})$.

\begin{rk}
By Lemma~\ref{lemma:identityCochainSeq} we have $\cbdry(\alpha_{\bullet}) = 0$ if and only if $\cbdry(\rho) = 0$ and $\cbdry(\sigma)$ is divisible by $p^{\omega}$. So we may rephrase Proposition~\ref{prop:splittingCocycleLevel} as follows:
\end{rk}

\begin{corollary}
\label{coroll:new-sCL}
Let $n \geq 1$ and $r_0 \geq 2m$. For every $\bar{\rho} \in H^n(G,M)$ and every $\bar{\eta} \in H^{n+1}(G,N)$ there is a \coySeq. $\alpha_{\bullet} \in \zz^n_{r_0}(G,M_{\bullet})$ of level at most~$m$ such that for every $r \geq r_0$ the cohomology class $\alpha_r + B^n(G,M_r) \in H^n(G,M_r)$ corresponds under the isomorphism of Theorem~\ref{thm18} to $(\bar{\rho},\bar{\eta}) \in H^n(G,M) \oplus H^{n+1}(G,N)$.
\qed
\end{corollary}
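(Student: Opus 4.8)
The plan is to unwind the definitions and reduce everything to Proposition~\ref{prop:splittingCocycleLevel}, which already does the real work. The statement to be proved (Corollary~\ref{coroll:new-sCL}) asks, given cohomology classes $\bar\rho \in H^n(G,M)$ and $\bar\eta \in H^{n+1}(G,N)$, for a cochain sequence $\alpha_\bullet \in \zz^n_{r_0}(G,M_\bullet)$ of level at most $m$ realising the pair $(\bar\rho,\bar\eta)$ under the splitting of Theorem~\ref{thm18} in each degree $r \geq r_0$.

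First I would choose cocycle representatives $\rho \in Z^n(G,M)$ of $\bar\rho$ and $\eta \in Z^{n+1}(G,N)$ of $\bar\eta$. Then I would invoke Proposition~\ref{prop:splittingCocycleLevel}\enref{sCL-1} to obtain an $n$-cochain $\sigma \in C^n(G,N)$ with $\cbdry(\sigma) = p^m\eta$. Using this data I would define $\alpha_\bullet$ to be the cochain sequence determined by $(\rho,\sigma;r_0,m)$; note this requires $r_0 \geq m$, which holds since $r_0 \geq 2m$ and $m \geq 0$ (the case $m=0$, i.e.\ $G$ trivial, being vacuous since then $H^{n+1}(G,N)=0$ forces $\bar\eta=0$, or can simply be absorbed). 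By construction $\alpha_r = \pro_r(\rho + p^{r-m}\sigma)$, which is exactly the cochain appearing in Proposition~\ref{prop:splittingCocycleLevel}\enref{sCL-2}--\enref{sCL-3}.

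The remaining points are then read off directly. That $\alpha_\bullet \in \zz^n_{r_0}(G,M_\bullet)$, i.e.\ $\cbdry(\alpha_r)=0$ for all $r \geq r_0$, is Proposition~\ref{prop:splittingCocycleLevel}\enref{sCL-2} (here we need $r_0 \geq m$). That $\alpha_r + B^n(G,M_r)$ corresponds under the isomorphism of Theorem~\ref{thm18} to $(\bar\rho,\bar\eta)$ for every $r \geq r_0$ is Proposition~\ref{prop:splittingCocycleLevel}\enref{sCL-3}, which applies because $r_0 \geq 2m$. The level bound is immediate from the definition of level together with the fact that $\alpha_\bullet$ is defined by $(\rho,\sigma;r_0,m)$, so its level is at most $m$.

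There is essentially no obstacle: the only thing to check carefully is the bookkeeping on the constants, namely that the hypothesis $r_0 \geq 2m$ is strong enough to feed both parts \enref{sCL-2} and \enref{sCL-3} of Proposition~\ref{prop:splittingCocycleLevel} (the former needs $r \geq m$, the latter $r \geq 2m$), and that the choice $\omega = m$ is consistent with $\omega \in \{0,1,\ldots,r_0\}$ as demanded by the definition of a cochain sequence. Both are clear. The corollary is therefore just a repackaging of Proposition~\ref{prop:splittingCocycleLevel} in the language of cochain sequences, and the proof is a two-line citation.
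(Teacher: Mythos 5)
Correct, and this is exactly the intended argument: the paper marks the corollary with \qed precisely because it is a direct repackaging of Proposition~\ref{prop:splittingCocycleLevel}, setting $\omega=m$, using part~\enref{sCL-1} to produce $\sigma$, part~\enref{sCL-2} for the cocycle condition, and part~\enref{sCL-3} for the identification under Theorem~\ref{thm18}. Your bookkeeping on $r_0\geq 2m\geq m$ and on the level bound matches the remark preceding the corollary in the paper.
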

 
\begin{lemma}
\label{lemma:coboundarySequence}
Let $n \geq 1$. Suppose  that $\alpha_{\bullet} \in \zz^n_{r_0}(G,M_{\bullet})$ has level $\omega \leq r_0-m$. The following statements are equivalent:
\begin{enumerate}
\item
\label{enum:coboundarySequence-1}
$\alpha_{r_1} \in B^n(G,M_{r_1})$ for some value $r_1 \geq r_0$~of $r$.
\item
\label{enum:coboundarySequence-2}
$\alpha_r \in B^n(G,M_r)$ for every~$r \geq r_0$.
\item
\label{enum:coboundarySequence-3}
$\alpha_{\bullet} \in \bb^n_{r_0}(G,M_{\bullet})$.
\item
\label{enum:coboundarySequence-4}
$\alpha_{\bullet} = \cbdry(\beta_{\bullet})$ for some $\beta_{\bullet}\in \cc^{n-1}_{r_0}(G,M_{\bullet})$ of level at most $\omega + m$.
\end{enumerate}
\end{lemma}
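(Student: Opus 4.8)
The plan is to prove the cycle of implications
$\enref{coboundarySequence-4} \Rightarrow \enref{coboundarySequence-3} \Rightarrow \enref{coboundarySequence-1} \Rightarrow \enref{coboundarySequence-2} \Rightarrow \enref{coboundarySequence-4}$,
of which only the last is substantive. The first two implications are immediate: $\enref{coboundarySequence-4}$ exhibits $\alpha_\bullet$ as a coboundary in the \cohSeq. complex, hence $\enref{coboundarySequence-3}$ holds; and $\enref{coboundarySequence-3}$ says $\alpha_\bullet = \cbdry(\beta_\bullet)$ for some $\beta_\bullet$, so $\alpha_r = \cbdry(\beta_r) \in B^n(G,M_r)$ for each $r$, giving $\enref{coboundarySequence-1}$ (and in fact $\enref{coboundarySequence-2}$).

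First I would dispose of $\enref{coboundarySequence-1} \Rightarrow \enref{coboundarySequence-2}$. Write $\alpha_\bullet$ via $(\rho,\sigma;r_0,\omega)$ with $\omega$ the level, so $\alpha_r = \pro_r(\rho + p^{r-\omega}\sigma)$, and $\cbdry(\alpha_\bullet)=0$ means $\cbdry(\rho)=0$ and $\cbdry(\sigma) = p^\omega \tau$ for some $\tau \in C^{n+1}(G,N)$ (using the Remark after the definition of $\zz^n$). From $\alpha_{r_1} \in B^n(G,M_{r_1})$ I want to deduce $\rho \in B^n(G,M)$, and then that each $\alpha_r$ is a coboundary. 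The key point is to lift the coboundary relation: if $\pro_{r_1}(\rho + p^{r_1-\omega}\sigma) = \cbdry(\beta)$ for some $\beta \in C^{n-1}(G,M_{r_1})$, choose a lift $\hat\beta \in C^{n-1}(G,M)$ of $\beta$; then $\rho + p^{r_1-\omega}\sigma - \cbdry(\hat\beta)$ has image zero in $M_{r_1}=M/p^{r_1}N$, i.e. it lies in $p^{r_1}C^n(G,N)$. Since $r_1 \geq r_0 \geq \omega + m$, this shows $\rho = \cbdry(\hat\beta) + p^\omega\cdot(\text{something in } C^n(G,N))$; applying $\pro_\bullet$ and comparing with $\alpha_\bullet$ then expresses $\alpha_\bullet$ in the form needed for $\enref{coboundarySequence-2}$ and $\enref{coboundarySequence-4}$ simultaneously.

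This brings me to the main obstacle, which is really $\enref{coboundarySequence-2}\Rightarrow\enref{coboundarySequence-4}$: one must produce a \emph{single} sequence $\beta_\bullet$ of level $\leq \omega+m$ whose coboundary is $\alpha_\bullet$, not merely coboundaries $\beta_r$ for each $r$. The strategy is to split the problem using $\cbdry(\rho)=0$ and $\cbdry(\sigma)=p^\omega\tau$. From the computation above, $\rho = \cbdry(\hat\beta) + p^\omega\gamma$ for some $\gamma \in C^n(G,N)$ with $\cbdry(\gamma)=\tau$ (one checks $\cbdry(p^\omega\gamma) = \cbdry(\rho)-\cbdry^2(\hat\beta)=0$, and since $\Ann_N(p)=\{0\}$ and $p^\omega\cbdry(\gamma)=\cbdry(\sigma)=p^\omega\tau$, we get $\cbdry(\gamma)=\tau$). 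Now $p^m\eta$-type reasoning: by Remark~\ref{remark:transfer}, $p^m\gamma$ — wait, rather, $\cbdry(\gamma)=\tau$ already holds, but I need a preimage of $\sigma$ under $\cbdry$ up to the right power. Set $\delta := \sigma - p^\omega\gamma \in C^n(G,N)$; then $\cbdry(\delta) = p^\omega\tau - p^\omega\tau = 0$, so $\delta \in Z^n(G,N)$, and by Remark~\ref{remark:transfer} there is $\mu \in C^{n-1}(G,N)$ with $\cbdry(\mu) = p^m\delta$. Then I claim $\beta_\bullet := \pro_\bullet(\hat\beta + p^{\bullet-\omega}\gamma + p^{\bullet-\omega-m}\mu)$ does the job: it is a \cohSeq. of level $\leq \omega+m$ (its "$\rho$-part" is $\hat\beta$, its "$\sigma$-part" combines $\gamma$ and $\mu$ after adjusting powers), it is well-defined since $r_0 \geq \omega+m$, and
\[
\cbdry(\beta_\bullet) = \pro_\bullet\bigl(\cbdry(\hat\beta) + p^{\bullet-\omega}\cbdry(\gamma) + p^{\bullet-\omega-m}\cbdry(\mu)\bigr)
= \pro_\bullet\bigl(\cbdry(\hat\beta) + p^{\bullet-\omega}\tau + p^{\bullet-\omega}\delta\bigr).
\]
Since $\cbdry(\hat\beta) = \rho - p^\omega\gamma$ and $p^{\bullet-\omega}(\tau+\delta) = p^{\bullet-\omega}\cbdry(\gamma) + p^{\bullet-\omega}(\sigma - p^\omega\gamma)$, everything collapses — the $\cbdry(\gamma)$ terms cancel against stray pieces — to $\pro_\bullet(\rho + p^{\bullet-\omega}\sigma) = \alpha_\bullet$, after double-checking the bookkeeping of the exponents, which is the one genuinely fiddly computation. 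The hypothesis $\omega \leq r_0 - m$ is exactly what guarantees $\beta_\bullet$ starts at $r_0$ with a nonnegative exponent on $\mu$, and the standing assumption $\Ann_N(p)=\{0\}$ is what lets me cancel powers of $p$ freely when solving $\cbdry(\gamma)=\tau$.
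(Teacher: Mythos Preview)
Your overall strategy --- lift the coboundary relation at $r_1$ to $M$, extract a cocycle in~$N$, kill it using $p^m H^n(G,N)=0$, and assemble $\beta_\bullet$ --- is exactly the paper's, but the execution contains genuine errors.

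From the lift you obtain $\rho - \cbdry(\hat\beta) = p^{r_1}\nu - p^{r_1-\omega}\sigma$ for some $\nu \in C^n(G,N)$; this lies in $p^{r_1-\omega}C^n(G,N)$, not in $p^{\omega}C^n(G,N)$, and $r_1-\omega$ need not exceed~$\omega$. More seriously, whatever exponent $k$ you use in $\rho = \cbdry(\hat\beta) + p^k\gamma$, applying $\cbdry$ yields $p^k\cbdry(\gamma)=\cbdry(\rho)=0$, hence $\cbdry(\gamma)=0$ --- not $\tau$. Your line ``$p^\omega\cbdry(\gamma)=\cbdry(\sigma)=p^\omega\tau$'' is unjustified (indeed it would force $\cbdry(\sigma)=0$, contradicting the generic case $\tau\neq 0$). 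Consequently $\cbdry(\delta)=\cbdry(\sigma)-p^\omega\cbdry(\gamma)=p^\omega\tau\neq 0$ in general, and the construction of~$\mu$ fails. Finally, your formula $\beta_\bullet = \pro_\bullet(\hat\beta + p^{\bullet-\omega}\gamma + p^{\bullet-\omega-m}\mu)$ is ill-typed: $\gamma \in C^n(G,N)$ cannot be added to $\hat\beta,\mu \in C^{n-1}$.

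The paper avoids all of this with one clean move: rather than introducing an auxiliary $\gamma$, it invokes Lemma~\ref{lemma:identityCochainSeq} to replace $\sigma$ by $\sigma - p^\omega\nu$ without changing $\alpha_\bullet$. After this replacement the lifted relation reads $p^{r_1-\omega}\sigma = \cbdry(\hat\beta) - \rho$, so $\sigma$ \emph{itself} is a cocycle in~$N$ (by regularity of~$p$). Now choose $\chi\in C^{n-1}(G,N)$ with $\cbdry(\chi)=p^m\sigma$, set $\lambda = \hat\beta - p^{r_1-\omega-m}\chi$ (the exponent is $\geq 0$ since $r_1-\omega\geq m$), and then $\rho=\cbdry(\lambda)$ and $\beta_\bullet = \pro_\bullet(\lambda + p^{\bullet-\omega-m}\chi)$ works directly. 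Your detour through $\gamma$ and $\delta$ is unnecessary, and is precisely where the algebra went wrong.
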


\begin{proof}
The implications \enref{coboundarySequence-4}${} \Rightarrow {}$\enref{coboundarySequence-3}${} \Rightarrow {}$\enref{coboundarySequence-2}${} \Rightarrow {}$\enref{coboundarySequence-1} are clear. \quad
\enref{coboundarySequence-1}${}\Rightarrow{}$\enref{coboundarySequence-4}:
Let $\alpha_{\bullet} = \pro_{\bullet}(\rho + p^{\bullet-\omega} \sigma)$.
Since $\alpha_{r_1} \in B^n(G,M_{r_1})$ there are $\phi \in C^{n-1}(G,M)$ and $\psi \in C^n(G,N)$ such that $\rho + p^{r_1-\omega} \sigma = \cbdry(\phi) + p^{r_1} \psi$, and hence
\[
p^{r_1-\omega} (\sigma - p^{\omega} \psi) = \cbdry(\phi) - \rho \, .
\]
By Lemma~\ref{lemma:identityCochainSeq} we may replace $\sigma$ by $\sigma - p^{\omega} \psi$ without altering $\alpha_{\bullet}$. Hence
\[
p^{r_1-\omega} \sigma = \cbdry(\phi) - \rho \, .
\]
Now, the right hand side is a cocycle, since $\alpha_{\bullet} \in Z^n(G,M_{\bullet})$ means that $\rho$ is a cocycle. Hence the left hand side lies in $Z^n(G,N)$. So $\sigma \in Z^n(G,N)$ by regularity of $p$, and therefore since $p^m H^n(G,N) = 0$ there is $\chi \in C^{n-1}(G,N)$ with $\cbdry(\chi) = p^m \sigma$.
Hence $\rho = \cbdry (\lambda)$ for $\lambda = \phi - p^{r_1-\omega-m} \chi \in C^{n-1}(G,M)$.
So $\alpha_{\bullet} = \cbdry(\beta_{\bullet})$ for $\beta_{\bullet} = \pro_{\bullet}(\lambda + p^{\bullet-\omega-m} \chi)$.
\end{proof}

\noindent
The next result will be needed in the proof of Lemma~\ref{lemma:enoughElabs}\@.

\begin{lemma}
\label{lemma:cohSeqsDense}
Let $n \geq 1$ and $r_1 \geq r_0 \geq 2m$. For each $z \in Z^n(G,M_{r_1})$ there is an $\alpha_{\bullet} \in \zz^n_{r_0}(G,M_{\bullet})$ of level at most $m$ with $\alpha_{r_1} = z$.
\end{lemma}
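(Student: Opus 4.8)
The plan is to lift the cocycle $z$ to a cochain on $M$, repair it into an honest cocycle by a correction term supplied by Proposition~\ref{prop:splittingCocycleLevel}\enref{sCL-1}, and then read off the desired cocycle sequence; the splitting isomorphism of Theorem~\ref{thm18} is not needed directly.

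First I would choose a lift $\tilde z \in C^n(G,M)$ of $z$ along the surjection $\pro_{r_1}\colon C^n(G,M)\twoheadrightarrow C^n(G,M/p^{r_1}N)$. Since $\pro_{r_1}$ commutes with $\Delta$ and $\Delta z=0$, the cochain $\Delta\tilde z$ lies in the kernel of $\pro_{r_1}$ on $(n{+}1)$-cochains, which consists of the cochains taking values in $p^{r_1}N = \Bild(i_{r_1})$, with $i_{r_1}\colon N\to M$, $x\mapsto p^{r_1}x$ as in the proof of Theorem~\ref{thm18}. Because $\Ann_N(p)=\{0\}$, the map $i_{r_1}$ is injective, so $\Delta\tilde z = i_{r_1}(\eta)$ for a unique $\eta\in C^{n+1}(G,N)$, and applying $\Delta$ once more together with injectivity of $i_{r_1}$ gives $\Delta\eta=0$, i.e.\ $\eta\in Z^{n+1}(G,N)$.

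Next, Proposition~\ref{prop:splittingCocycleLevel}\enref{sCL-1} produces $\sigma\in C^n(G,N)$ with $\Delta\sigma=p^m\eta$. Put $\rho:=\tilde z-p^{r_1-m}\sigma\in C^n(G,M)$; the exponent is nonnegative since $r_1\geq 2m\geq m$. Then $\Delta\rho = i_{r_1}(\eta)-p^{r_1-m}\Delta\sigma = p^{r_1}\eta-p^{r_1}\eta=0$ in $C^{n+1}(G,M)$, so $\rho\in Z^n(G,M)$. I then set $\alpha_\bullet:=\pro_\bullet(\rho+p^{\bullet-m}\sigma)$, the cochain sequence attached to $(\rho,\sigma;r_0,m)$; this is allowed since $0\le m\le r_0$, and by construction its level is at most $m$. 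To see $\alpha_\bullet\in\zz^n_{r_0}(G,M_\bullet)$, note that $\Delta\alpha_\bullet$ is the sequence attached to $(\Delta\rho,\Delta\sigma;r_0,m)=(0,p^m\eta;r_0,m)$, whose $r$-th term equals $\pro_r(p^r\eta)=0$ because $p^r\eta$ takes values in $p^rN=\ker(M\twoheadrightarrow M/p^rN)$. Finally $\alpha_{r_1}=\pro_{r_1}(\rho+p^{r_1-m}\sigma)=\pro_{r_1}(\tilde z)=z$, as required.

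The only delicate point is the bookkeeping in the second step: one must identify $\ker(\pro_{r_1})$ on cochains with the cochains valued in $p^{r_1}N$ and check that $i_{r_1}(\eta)$ and $p^{r_1-m}\Delta\sigma$ agree on the nose in $C^{n+1}(G,M)$; the rest is formal. An alternative organisation is to take $\beta_\bullet$ of level $\leq m$ from Corollary~\ref{coroll:new-sCL} representing the class of $z$ under Theorem~\ref{thm18}, observe that $z-\beta_{r_1}\in B^n(G,M_{r_1})$, lift a bounding $(n{-}1)$-cochain to $M$ and add to $\beta_\bullet$ the resulting level-$0$ coboundary sequence, invoking Lemma~\ref{lemma:moduleCochainSeq}; but the direct construction above is more self-contained and incidentally uses only $r_0\geq m$.
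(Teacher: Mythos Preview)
Your proof is correct. Interestingly, the paper's own proof is precisely the ``alternative organisation'' you sketch at the end: it invokes Theorem~\ref{thm18} to identify the class of $z$ with a pair $\xi\in H^n(G,M)\oplus H^{n+1}(G,N)$, takes $\beta_\bullet$ of level $\leq m$ from Corollary~\ref{coroll:new-sCL} representing $\xi$, and then adjusts $\rho$ by the coboundary of a lifted $(n{-}1)$-cochain to hit $z$ on the nose at $r_1$.

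Your direct route is genuinely more elementary: it bypasses the splitting isomorphism entirely, using only that $p^m H^{n+1}(G,N)=0$ (via Proposition~\ref{prop:splittingCocycleLevel}\enref{sCL-1}). As you observe, this gives the stronger conclusion that $r_0\geq m$ suffices, whereas the paper's argument needs $r_0\geq 2m$ because Theorem~\ref{thm18} and Corollary~\ref{coroll:new-sCL} do. The paper's approach has the mild conceptual advantage of making visible which cohomology class $\alpha_\bullet$ represents, but for the bare existence statement your construction is cleaner.
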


\begin{proof}
Let $\xi$ be the element of $H^n(G,M) \oplus H^{n+1}(G,N)$ corresponding to $z+B^n(G,M_{r_1}) \in H^n(G,M_{r_1})$ under the isomorphism of Theorem~\ref{thm18}\@. By Corollary~\ref{coroll:new-sCL} there is some $\beta_{\bullet} = \pro_{\bullet}(\rho + p^{\bullet-m} \sigma) \in \zz^n_{r_0}(G,M_{\bullet})$ such that $\beta_r + B^n(G,M_r)$ corresponds to $\xi$ for every $r \geq r_0$. Hence $z - \beta_{r_1} \in B^n(G,M_{r_1})$. Pick $\lambda \in C^{n-1}(G,M_{r_1})$ with $z = \cbdry(\lambda) + \beta_{r_1}$, and choose $\bar{\lambda} \in C^{n-1}(G,M)$ with $\pro_{r_1}(\bar{\lambda}) = \lambda$. For $\rho' = \rho + \cbdry(\bar{\lambda}) \in Z^n(G,M)$ we then have $z = \alpha_{r_1}$ for $\alpha_{\bullet} = \pro_{\bullet}(\rho' + p^{\bullet-m} \sigma)$.
\end{proof}

%%%%%%%%%%%%%%%%%%%%%%%%%%%%%%%%%%%%%%%%%%%%%%%%%%%%%%%%%%%%%%%%%%%%%%%%%%%%%
% begin of revision
%%%%%%%%%%%%%%%%%%%%%%%%%%%%%%%%%%%%%%%%%%%%%%%%%%%%%%%%%%%%%%%%%%%%%%%%%%%%%
\section{Coclass families of \texorpdfstring{$p$}{p}-groups}
\label{coclass}

\noindent
Coclass families are certain infinite families of finite $p$-groups of 
fixed coclass. Their construction has been introduced in \cite{ELG08} 
based on a version of Theorem \ref{thm18}. Here we exhibit a construction 
based on Proposition \ref{prop:splittingCocycleLevel}. The construction 
differs from \cite{ELG08} in that it uses cocycles rather than their 
corresponding cocycle classes and thus is slightly more explicit. This 
difference will be essential in our later applications. 

Every coclass family of $p$-groups of coclass $r$ is associated with an 
infinite pro-$p$-group $S$ of coclass $r$. The structure of the infinite 
pro-$p$-groups of finite coclass is well investigated.
For example, it is known that for each such group $S$ there exists natural 
numbers $l$ and $d$ so that the $l$-th term of the lower central series 
$\gamma_l(S)$ satisfies that $\gamma_l(S) \cong \Z_p^d$, where $\Z_p$ 
denotes the $p$-adic numbers, and $S/\gamma_l(S)$ is a finite $p$-group 
of coclass $r$. The integer $d$ is an invariant of $S$ called the 
{\em dimension} of $S$. The integer $l$ is not an invariant; in fact, 
each integer larger than $l$ can be used instead of $l$. The subgroup
$\gamma_l(S)$ is often denoted by $T$ and called a {\em translation 
subgroup} of $S$. Its subgroup series defined by $T_0 = T$ and $T_{i+1} 
= [T_i, S]$ satisfies $[T_i : T_{i+1}] = p$ for $i \in \N_0$. Thus
the series $T = T_0 > T_1 > \ldots$ is the unique series of $S$-normal
subgroups in $T$, and $T$ is called a {\em uniserial} $S$-module. We
refer to \cite{LeedhamGreenMcKay:book} for many more details on the 
structure of the infinite pro-$p$-groups of coclass $r$.

Given $S$ and $l$, we write $S_i = S / \gamma_{l+i}(S)$ for $i \in \N_0$.
Then $S_0, S_1, \ldots$ is an infinite sequence of finite $p$-groups of 
coclass $r$. This sequence is called the {\em main line} associated with 
$S$. The main line is not necessarily a coclass family itself, but it 
always consists of $d$ coclass families and finitely many other groups. 
More precisely, there exists an integer $h \geq l$ so that the $d$ infinite
sequences $(S_{h+i}, S_{h+i+d}, S_{h+i+2d}, \ldots)$ for $0 \leq i < d$ are 
coclass families. Note that the group $S$ can be viewed as an extension of 
$S_{h+i+jd}$ by its natural module $T_{h+i+jd}$ for each $h, i$ and $j$
and the group $S_{h+i+jd+k}$ can be viewed as an extension of $S_{h+i+jd}$
by its natural module $T_{h+i+jd}/T_{h+i+jd+k}$ for each $h,i,j$ and $k$.

For each coclass family $(G_0, G_1, \ldots)$ associated with the 
infinite pro-$p$-group $S$ there exists an integer $k$ so that each group
$G_j$ is a certain extension of $S_{h+i+jd}$ with its natural module 
$T_{h+i+jd} / T_{h+i+jd+k}$. The extensions can be choosen so that 
the main line group $S_{h+i+jd+1}$ is not a quotient of $G_j$. In this case 
the integer $k$ is an invariant of the coclass family called its 
{\em distance} to the main line.

To describe the groups in a coclass family explicitly, it is more convenient
to use a different type of extension construction. Instead of describing
a group $G_j$ in a coclass family as extension of an associated main line 
group $S_{h+i+jd}$ by its natural module of fixed size $p^k$, we describe 
each $G_j$ as an extension of a fixed main line group $S_{\ell}$ for some 
suitable $\ell$ by a module of variable size $M_j := T_{\ell} / T_{\ell+jd}$.
It is not difficult to observe that $T_{\ell + jd} = p^j T_{\ell}$ and thus
the group $M_j$ is isomorphic to a direct product of $d$ copies of cyclic 
groups of order $p^j$.

We now use Proposition~\ref{prop:splittingCocycleLevel} to exhibit a complete
construction for a coclass family $(G_0, G_1, \ldots)$ associated with the
infinite pro-$p$-group $S$ of coclass $r$. For this purpose let 
$m = \log_p(S_\ell) = r+\ell-1$ and let $j \geq 3m +1$. Let $\rho \in 
Z^2(S_\ell, T_\ell)$ so that $S$ is an extension of $S_\ell$ with $T_\ell$
via $\rho$.

\begin{defn}
There exists $\eta \in Z^3(S_\ell, T_\ell)$ so that $G_j$ is an extension
of $S_\ell$ by $M_j$ via $\tau_j$ where 
$\tau_{\bullet} = pro_{\bullet}(\rho + p^{\bullet - m} \sigma)$
and $\cbdry(\sigma) = p^m \eta$.
\end{defn}

The definition of coclass families asserts that for each coclass family 
there exists an $\eta$ yielding this family. It may happen that different
cocycles $\eta_1, \eta_2$ yield coclass families with pairwise isomorphic
groups; for example, this is the case if $\eta_1 \equiv \eta_2 \bmod 
B^3(S_\ell, T_\ell)$. We note that every $\eta \in Z^2(S_\ell, M_j)$ 
yields a coclass family via the above construction.
 
The significance of coclass families is underlined by the fact that for 
$(p,r) = (2,r)$ or $(p,r) = (3,1)$ all but finitely many $p$-groups of 
coclass $r$ are contained in a coclass family. 

%%%%%%%%%%%%%%%%%%%%%%%%%%%%%%%%%%%%%%%%%%%%%%%%%%%%%%%%%%%%%%%%%%%%%%%%%%%%%
% end of revision
%%%%%%%%%%%%%%%%%%%%%%%%%%%%%%%%%%%%%%%%%%%%%%%%%%%%%%%%%%%%%%%%%%%%%%%%%%%%%

\section{\CohSeq.s and elementary abelians}

\noindent
We now apply the results of Section~\ref{CohSeqs} to the coclass family $G_{\bullet}$ of Section~\ref{coclass}\@.  In the language of Notation~\ref{setup2} this means that $M = T$. It would be natural to take $N=T$ as well, but for technical reasons\footnote{It simplifies Remark~\ref{rk:firstW} and especially Lemma~\ref{lemma:intermediate} if $M/p^r N$ and $M/N$ have the same elementary abelian subgroups.} we shall actually take $N = pT$. Hence $M_{\bullet} = M/p^{\bullet}N = T/p^{\bullet+1} T$, and $G_{\bullet+1} = M_{\bullet} . P$ with extension cocycle $\tau_{\bullet} \in \zz^2_{r_0}(P,M_{\bullet})$\@.
\medskip

\noindent
Recall that if $N \trianglelefteq G$ and $U \leq G/N$, then a \emph{lift} of $U$ is a subgroup $\bar{U} \leq G$ such that the projection map $G \rightarrow G/N$ maps $\bar{U}$ isomorphically to~$U$.

Suppose that we are given $H \leq G_{r+1}$. Setting $K := H \cap M_r$ and $Q := HM_r/M_r$, we see that $H$ is an extension $H = K .Q$, with $Q \leq P$, and $K$ a $Q$-submodule of~$M_r$.  If $K$ has a complement $C$~in $H$ -- which is certainly the case if $H$ is elementary abelian --, then $C \leq G_{r+1}$ is a lift of $Q$.

%%%%%%%%%%%%%%%%%%%%%%%%%%%%%%%%%%%%%%%%%%%%

\subsection{Extension theory}
We recall some details of extension theory, see e.g.\@ \cite[\S3.7]{Benson:I}\@.
Let $G$ be a finite group and $M$ a left $\zz G$-module. Recall that every group extension $\Gamma = M.G$ can be constructed using a 2-cocycle $\tau \in Z^2(G,M)$: the underlying set is $M \times G$, with multiplication
\[
(t_1,g_1)(t_2,g_2) = (t_1 + {}^{g_1}t_2 + \tau(g_1,g_2), g_1g_2) \, .
\]
Associativity is equivalent to the cocycle condition. The extension splits as a semidirect product $\Gamma = M \rtimes G$ if and only $\tau \in B^2(G,M)$. If $\tau = \cbdry(f)$ then $G(f) = \left\{ \left(-f(g), g\right) \mid g \in G \right\} \leq \Gamma$ is a lift of $G$, and every lift arises thus.

\begin{lemma}
\label{lemma:preUsplit}
If $f,f' \in C^1(G,M)$ satisfy $\cbdry(f) = \tau =  \cbdry(f')$ then $f'-f \in Z^1(G,M)$ and moreover
\[
f'-f \in B^1(G,M) \; \Longleftrightarrow \; \text{$G(f)$ and $G(f')$ are conjugate by an element of~$M$.}
\]
\end{lemma}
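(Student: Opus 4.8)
The plan is to unwind the explicit description of lifts given just before the statement. Recall that $G(f) = \{(-f(g),g) \mid g \in G\}$ and $G(f') = \{(-f'(g),g) \mid g \in G\}$; the first task is to verify that $f'-f$ is a $1$-cocycle. This is immediate: applying $\cbdry$ to both $f$ and $f'$ gives $\tau$, so $\cbdry(f'-f) = 0$, hence $f'-f \in Z^1(G,M)$. (Here I use that $\cbdry$ is $R$-linear, which the excerpt has already recorded.)

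For the equivalence, I would set $h := f'-f \in Z^1(G,M)$ and compute what conjugation by an element $(t,1) \in M \le \Gamma$ does to $G(f)$. Using the multiplication law $(t_1,g_1)(t_2,g_2) = (t_1 + {}^{g_1}t_2 + \tau(g_1,g_2), g_1g_2)$ — and noting $\tau(g,1) = \tau(1,g) = 0$ by normalization — one computes $(t,1)(-f(g),g)(-t,1) = (t - {}^{g}t - f(g), g)$. So conjugation by $(t,1)$ carries $G(f)$ to $G(f'')$ where $f''(g) = f(g) - (t - {}^{g}t) = f(g) + \cbdry_0(-t)(g)$, i.e. $f'' - f = \cbdry(t)$ as a $1$-coboundary (where $t$ is viewed as a $0$-cochain, its coboundary being $g \mapsto {}^{g}t - t$, up to the sign convention in play). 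Thus $G(f)$ and $G(f')$ are $M$-conjugate if and only if there is $t \in M$ with $f' - f = \cbdry(t)$ up to sign, which is exactly the condition $f'-f \in B^1(G,M)$.

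One direction is now the routine verification above; for the converse, if $f'-f = \cbdry(t)$ for a suitable $t \in M$, then the same computation shows $(t,1)$ (or $(-t,1)$, depending on signs) conjugates $G(f)$ to $G(f')$. I would present the sign bookkeeping carefully but briefly, since the only subtlety is matching the normalized-standard-resolution coboundary convention $\cbdry f(g) = {}^{g}t - t$ for $0$-cochains with the $\tau$-twisted multiplication; everything else is a one-line group-theoretic computation.

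The main obstacle, such as it is, is purely the sign/convention matching: one must be careful that "lift via $f$" uses $(-f(g),g)$ and not $(f(g),g)$, and that the coboundary of the $0$-cochain $t$ is ${}^{g}t-t$; getting these consistent is what makes the statement come out as literally $f'-f \in B^1(G,M)$ rather than $-(f'-f) \in B^1$ (which is the same subgroup anyway, so even a sign slip is harmless). There is no real mathematical difficulty here — it is a direct unpacking of the extension-theoretic dictionary recalled in the preceding paragraph.
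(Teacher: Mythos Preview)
Your proof is correct: the direct conjugation computation $(t,1)(-f(g),g)(-t,1) = (t - {}^g t - f(g), g)$ shows that conjugation by $(t,1)$ carries $G(f)$ to $G(f + \cbdry(t))$, which gives exactly the claimed equivalence. The paper does not spell this out but simply cites \cite[Proposition~3.7.2]{Benson:I} (noting that the proof there shows conjugation by $M$ realizes every coboundary), so your argument is the explicit unpacking of that citation and follows the same underlying idea.
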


\begin{proof}
This is Proposition 3.7.2 of~\cite{Benson:I}\@. Observe from the proof of that result that conjugation by elements of~$M$ induces every coboundary.
\end{proof}

\subsection{Lifting elementary abelians}

\begin{lemma}
\label{lemma:Usplit}
Let $Q \leq P$ and suppose $r_0 \geq 2m$.
Then the three following statements are equivalent:
\begin{enumerate}
\item
\label{enum:Usplit-1}
$Q$ has has a lift $\bar{Q}_r \leq G_{r+1}$ for all~$r \geq r_0$.
\item
\label{enum:Usplit-2}
$Q$ has has a lift $\bar{Q}_r \leq G_{r+1}$ for at least one~$r \geq r_0$.
\item
\label{enum:Usplit-3}
$\tau_{\bullet} \vert_Q = \cbdry(f_{\bullet})$ for some \cohSeq. $f_{\bullet} \in \cc^1_{r_0}(Q,M_{\bullet})$ of level at most~$m$.
\end{enumerate}
\end{lemma}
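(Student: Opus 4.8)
The plan is to prove the cycle of implications
\enref{Usplit-1}${}\Rightarrow{}$\enref{Usplit-2}${}\Rightarrow{}$\enref{Usplit-3}${}\Rightarrow{}$\enref{Usplit-1},
using the dictionary between lifts and coboundary decompositions from Lemma~\ref{lemma:preUsplit} together with the homological algebra of \cohSeq.s developed in Section~\ref{CohSeqs}. The implication \enref{Usplit-1}${}\Rightarrow{}$\enref{Usplit-2} is trivial, so there are really two things to do.

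For \enref{Usplit-2}${}\Rightarrow{}$\enref{Usplit-3}: suppose $Q$ has a lift $\bar Q_{r_1}\leq G_{r_1+1}$ for some $r_1\geq r_0$. By the extension theory recalled above (with $\Gamma = G_{r_1+1}=M_{r_1}.P$ and extension cocycle $\tau_{r_1}$), the restriction $\tau_\bullet\vert_Q$ is a \cohSeq. in $\zz^2_{r_0}(Q,M_\bullet)$ (restriction commutes with $\cbdry$ and with $\pro_\bullet$), and the existence of the lift $\bar Q_{r_1}$ says precisely that $\tau_{r_1}\vert_Q\in B^2(Q,M_{r_1})$. Now I would like to apply Lemma~\ref{lemma:coboundarySequence} to $\alpha_\bullet=\tau_\bullet\vert_Q$; its hypothesis is that the level $\omega$ of $\alpha_\bullet$ satisfies $\omega\leq r_0-m$. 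Here the subtlety is that $\tau_\bullet$ is defined on $P$ by $(\rho,\sigma;r_0,m)$ with level $\leq m$, and restriction to $Q$ cannot increase the level, so $\omega\leq m\leq r_0-m$ because $r_0\geq 2m$. Hence Lemma~\ref{lemma:coboundarySequence}\enref{coboundarySequence-1}${}\Rightarrow{}$\enref{coboundarySequence-4} applies and yields $\beta_\bullet\in\cc^1_{r_0}(Q,M_\bullet)$ of level at most $\omega+m\leq 2m$ with $\tau_\bullet\vert_Q=\cbdry(\beta_\bullet)$. This is almost \enref{Usplit-3}, but the claimed level bound there is $m$, not $2m$; I will need to argue that the level can be taken to be $\leq m$. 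The cleanest route is to go through cohomology: $\tau_\bullet\vert_Q$ represents a class in $H^2(Q,M_\bullet)$ that is a coboundary at $r_1$, hence (by the naturality of Theorem~\ref{thm18} for restriction, exactly as in Lemma~\ref{lemma:coboundarySequence}) is zero for all $r$; applying Corollary~\ref{coroll:new-sCL} with $(\bar\rho,\bar\eta)=(0,0)$ — or rather re-running the proof of Lemma~\ref{lemma:coboundarySequence} starting from $\sigma\vert_Q\in Z^2(Q,N)$ and using $p^mH^2(Q,N)=0$ directly — produces a primitive $\beta_\bullet$ of level at most $m$. I expect this to need a short extra paragraph; it is the technical heart of the proof.

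For \enref{Usplit-3}${}\Rightarrow{}$\enref{Usplit-1}: suppose $\tau_\bullet\vert_Q=\cbdry(f_\bullet)$ with $f_\bullet\in\cc^1_{r_0}(Q,M_\bullet)$. Then for each $r\geq r_0$ we have $\tau_r\vert_Q=\cbdry(f_r)$ in $C^2(Q,M_r)$, so by extension theory applied to $G_{r+1}=M_r.P$ the subset $Q(f_r)=\{(-f_r(g),g)\mid g\in Q\}$ is a subgroup of $G_{r+1}$ mapping isomorphically onto $Q$, i.e.\ a lift $\bar Q_r$. This direction is essentially a direct translation and should be short.

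The main obstacle is the level bookkeeping in \enref{Usplit-2}${}\Rightarrow{}$\enref{Usplit-3}: Lemma~\ref{lemma:coboundarySequence} only gives a primitive of level $\leq\omega+m$, whereas the statement demands level $\leq m$, so I must either sharpen that step or (preferably) invoke the cohomological splitting of Theorem~\ref{thm18}/Corollary~\ref{coroll:new-sCL} to manufacture a primitive \cohSeq. of level at most $m$ directly, after first using naturality of the splitting under restriction to see that the relevant cohomology classes vanish simultaneously for all $r\geq r_0$. Everything else is a routine application of the extension-theoretic dictionary and the results already proved in Sections~\ref{CohSeqs} and the extension theory subsection.
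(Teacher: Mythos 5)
Your core argument matches the paper's proof exactly: you use the extension-theoretic dictionary (the subgroup of $G_{r+1}$ lying above $M_r$ with quotient~$Q$ is $M_r.Q$ with class $\tau_r\vert_Q$, and $Q$ lifts if and only if $\tau_r\vert_Q\in B^2(Q,M_r)$) and then apply Lemma~\ref{lemma:coboundarySequence}, which simultaneously handles ``some $r$'' $\Leftrightarrow$ ``all $r$'' $\Leftrightarrow$ ``coboundary sequence'' and produces the explicit primitive $f_\bullet$. Your level bookkeeping is also correct as far as it goes: since $\tau_\bullet$ is given by $(\rho,\sigma;r_0,m)$ and restriction to a subgroup cannot increase the level, $\omega\le m\le r_0-m$, so Lemma~\ref{lemma:coboundarySequence} applies and delivers a primitive of level at most $\omega+m\le 2m$.

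The worry about ``$m$ versus $2m$'' that occupies the last part of your write-up is chasing a misprint rather than a genuine gap. The bound in statement~\enref{Usplit-3} should read \emph{level at most $2m$}, not~$m$: that is what the argument via Lemma~\ref{lemma:coboundarySequence} naturally gives, and it is precisely what the paper uses downstream — the set $\mathcal{E}$ is defined using $f_\bullet$ of level at most $2m$, and the proof of Lemma~\ref{lemma:enoughElabs} explicitly cites the present lemma to obtain an $f_\bullet$ of level at most~$2m$. Moreover the patches you sketch would not yield level~$m$ and are not needed: invoking Corollary~\ref{coroll:new-sCL} with $(\bar\rho,\bar\eta)=(0,0)$ merely produces a cocycle sequence of level $\le m$ representing the zero class, which is not a primitive of $\tau_\bullet\vert_Q$; and re-running the proof of Lemma~\ref{lemma:coboundarySequence} still ends with $\beta_\bullet=\pro_\bullet(\lambda+p^{\bullet-\omega-m}\chi)$, so the exponent on~$p$ forces the level bound $\omega+m$, which cannot be improved to $m$ without knowing $\omega=0$, and there is no reason for $\tau_\bullet\vert_Q$ to have level~$0$. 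Your first argument, with the corrected bound $2m$, is already the complete proof.
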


\begin{proof}
Let $H_r \leq G_{r+1}$ be the subgroup with $M_r \leq H_r$ and $H_r/M_r = Q$. Then $H_r = M_r . Q$ with extension class $\tau_r \vert_Q$, and $Q$ has a lift $\bar{Q}_r \leq G_{r+1}$ if and only if $\tau_r\vert_Q$ lies in $B^2(Q,M_r)$. Now apply Lemma~\ref{lemma:coboundarySequence}\@.
\end{proof}

\noindent
Now suppose that $E \leq G_{r+1}$ is elementary abelian. Setting $K = E \cap M_r$ and $U = EM_r / M_r \leq P$ as above, we have $E = K \times \bar{U}$ for a lift $\bar{U} \leq G_{r+1}$ of~$U$. Recall that $\bar{U} = U(\phi)$ for some $\phi \in C^1(U,M_r)$ with $\cbdry(\phi) = \tau_r\vert_U$.

\begin{notn}
We shall need to refer to several different maps between cohomology modules. Let $L \subseteq M$ be a submodule.
\begin{itemize}
\item
Inclusion $L \hookrightarrow M$ induces $H^n(G,L) \xrightarrow{\inc} H^n(G,M)$.
% \item
% The map $H^n(G,M) \xrightarrow{\pro_L} H^n(G,M/L)$ induced by projection $M % \twoheadrightarrow M/L$.
\item
$\mul^r \colon M/L \rightarrow M/p^r L$, $x + L \mapsto p^r x + p^r L$ induces $H^n(G,M/L) \xrightarrow{\mul^r} H^n(G,M/p^r L)$. 
\end{itemize}
Note that $\mul^r \circ \mul^s = \mul^{r+s}$, and $\pro_r(\rho + p^{r-m} \sigma) = \pro_r(\rho) + \mul^{r-m} \pro_m(\sigma)$.
\end{notn}

\begin{remark}
\label{rk:firstW}
Since $K \leq M_r = T/p^{r+1}T$ is elementary abelian, it follows that
\[
K \leq \Omega_1(M_r) = p^rT/p^{r+1}T \xrightarrow[\cong]{(\mul^r)^{-1}} T/pT \, .
\]
So $K = \mul^r(W)$ for some $W \leq T/pT$. Since $E$ is abelian, we have $[\bar{U},K]=1$, which is equivalent to $W \leq (T/pT)^U$.
\end{remark}

\begin{notn}
$\mathcal{E}$ is the set of all triples $(U,f_{\bullet},W)$ with $U \leq P$ an elementary abelian; $f_{\bullet} \in \cc^1_{r_0}(U,M_{\bullet})$ a \cohSeq. of level at most $2m$ such that $\cbdry(f_{\bullet}) = \tau_{\bullet}\vert_U$; and $W \leq (T/pT)^U$.
\end{notn}

\begin{lemma}
\label{lemma:enoughElabs}
Suppose that $r \geq r_0 \geq 2m$. Every elementary abelian $E \leq G_{r+1}$ has the form $E = E_r(U,f_{\bullet},W) := \mul^r(W) \times U(f_r)$ for some $(U,f_{\bullet},W) \in \mathcal{E}$.
\end{lemma}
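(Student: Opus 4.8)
The plan is to start from an arbitrary elementary abelian $E \leq G_{r+1}$ and reverse-engineer a triple $(U, f_\bullet, W) \in \mathcal{E}$ witnessing $E = E_r(U,f_\bullet,W)$. First I would set $K := E \cap M_r$ and $U := EM_r/M_r \leq P$, exactly as in the discussion preceding the lemma. Since $E$ is elementary abelian, $K$ is a $U$-submodule of $M_r$ admitting a complement, so $E = K \times \bar U$ for some lift $\bar U \leq G_{r+1}$ of $U$, and by the extension theory recalled above $\bar U = U(\phi)$ for some $\phi \in C^1(U,M_r)$ with $\cbdry(\phi) = \tau_r\vert_U$. This already identifies the ``$U(f_r)$'' part once I know $f_\bullet$ restricts correctly at level $r$; and by Remark~\ref{rk:firstW} we get $K = \mul^r(W)$ for a unique $W \leq (T/pT)^U$, which handles the first factor. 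So the content is: (a) $U$ is elementary abelian in $P$, (b) $W$ lies in the fixed points $(T/pT)^U$, and (c) there is a \cohSeq{} $f_\bullet \in \cc^1_{r_0}(U,M_\bullet)$ of level at most $2m$ with $\cbdry(f_\bullet) = \tau_\bullet\vert_U$ and $f_r = \phi$ (or at least $U(f_r) = \bar U$, i.e.\ $f_r \equiv \phi$ modulo something inducing conjugation, which by Lemma~\ref{lemma:preUsplit} means $f_r - \phi \in B^1$ — but in fact I want $f_r = \phi$ on the nose to get literal equality $E = E_r(U,f_\bullet,W)$).

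Items (a) and (b) are immediate: $U$ is a quotient of a subgroup of the elementary abelian... no, rather $U \leq P$ need not a priori be elementary abelian, but $U \cong \bar U \leq E$ is a subgroup of an elementary abelian group, hence elementary abelian. And (b) is precisely the last sentence of Remark~\ref{rk:firstW}. The real work is (c): producing the full \cohSeq{} $f_\bullet$ with prescribed value $f_r = \phi$ at one index. Here is where I would invoke Lemma~\ref{lemma:cohSeqsDense}, applied with the group $U$ in place of $G$ and the $2$-cocycle $\tau_r\vert_U \in Z^2(U,M_r)$: since $r \geq r_0 \geq 2m$ and $\tau_\bullet\vert_U \in \zz^2_{r_0}(U,M_\bullet)$, that lemma furnishes some $g_\bullet \in \zz^2_{r_0}(U,M_\bullet)$ of level at most $m$ with $g_r = \tau_r\vert_U$. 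But I want a \emph{$1$-cochain} sequence $f_\bullet$ with $\cbdry(f_\bullet) = \tau_\bullet\vert_U$, so instead the right tool is Lemma~\ref{lemma:Usplit} together with Lemma~\ref{lemma:coboundarySequence}: the existence of the lift $\bar U$ at index $r$ gives, via Lemma~\ref{lemma:Usplit}, a \cohSeq{} $f'_\bullet \in \cc^1_{r_0}(U,M_\bullet)$ of level at most $m$ with $\cbdry(f'_\bullet) = \tau_\bullet\vert_U$. Then $f'_r$ and $\phi$ are two $1$-cochains with the same coboundary $\tau_r\vert_U$, so $\phi - f'_r \in Z^1(U,M_r)$. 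I would then lift $\phi - f'_r$ to a \cohSeq{} in $\zz^1_{r_0}(U,M_\bullet)$ — this is again an application of Lemma~\ref{lemma:cohSeqsDense}, now in degree $n=1$ — obtaining some $h_\bullet$ of level at most $m$ with $h_r = \phi - f'_r$, and set $f_\bullet := f'_\bullet + h_\bullet$. This is a \cohSeq{} (by Lemma~\ref{lemma:moduleCochainSeq}) of level at most $\max(m,m) \leq 2m$, with $\cbdry(f_\bullet) = \tau_\bullet\vert_U + \cbdry(h_\bullet) = \tau_\bullet\vert_U$ (since $h_\bullet$ is a cocycle sequence), and $f_r = f'_r + (\phi - f'_r) = \phi$.

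With $f_r = \phi$ we get $U(f_r) = U(\phi) = \bar U$, hence $E_r(U,f_\bullet,W) = \mul^r(W) \times U(f_r) = K \times \bar U = E$, and $(U,f_\bullet,W) \in \mathcal{E}$ by (a), (b), and the level bound of at most $2m$. I expect the main obstacle to be bookkeeping around levels: one must check that the two separate applications of Lemma~\ref{lemma:cohSeqsDense} (via Lemma~\ref{lemma:Usplit} once, and directly once) each deliver level at most $m$, so that the sum has level at most $2m$ as required by the definition of $\mathcal{E}$ — this is exactly why $\mathcal{E}$ is defined with the bound $2m$ rather than $m$. A secondary point to verify carefully is that $\tau_\bullet\vert_U$ genuinely lies in $\zz^2_{r_0}(U,M_\bullet)$, i.e.\ restriction of a \cohSeq{} to a subgroup is again a \cohSeq{} and restriction commutes with $\cbdry$; this follows formally from the definitions since restriction acts on the defining cochains $\rho,\sigma$, but it should be noted. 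Everything else is a routine assembly of results already in hand.
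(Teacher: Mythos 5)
Your proposal is correct and follows essentially the same route as the paper: identify $K=\mul^r(W)$ and $\bar U=U(\phi)$ as in the preceding discussion, use Lemma~\ref{lemma:Usplit} (direction \enref{Usplit-2}${}\Rightarrow{}$\enref{Usplit-3}) to get a \cohSeq{} $f'_\bullet$ with $\cbdry(f'_\bullet)=\tau_\bullet\vert_U$, note $\phi-f'_r\in Z^1(U,M_r)$, correct by a cocycle sequence $h_\bullet$ from Lemma~\ref{lemma:cohSeqsDense}, and take $f_\bullet=f'_\bullet+h_\bullet$. Your brief aside about first attempting Lemma~\ref{lemma:cohSeqsDense} on $\tau_r\vert_U$ directly, and your explicit checks that $U$ is elementary abelian and that restriction preserves \cohSeq.s, are reasonable observations but do not change the argument, which matches the paper's.
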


\begin{proof}
We saw above that $E = \mul^r(W) \times U(\phi)$ with $U \leq P$ elementary abelian; $W \leq (T/pT)^U$; and $\phi \in C^1(U,M_r)$ with $\cbdry(\phi) = \tau_r \vert_U$. As $U(\phi)$ is a lift of $U$ in $G_{r+1}$, there is $f_{\bullet} \in \cc^1_{r_0}(U,M_{\bullet})$ of level at most $2m$ with $\cbdry(f_{\bullet}) = \tau_{\bullet}\vert_U$ by  Lemma~\ref{lemma:Usplit}\@. Hence $\phi - f_r \in Z^1(U,M_r)$, so by Lemma~\ref{lemma:cohSeqsDense} there is $z_{\bullet} \in \zz^1_{r_0}(U,M_{\bullet})$ of level at most $m$ with $z_r = \phi - f_r$. So $(U, f_{\bullet} + z_{\bullet}, W) \in \mathcal{E}$, and $E = \mul^r(W) \times U(f_r+z_r)$.
\end{proof}

%%%%%%%%%%%%%%%%%%%%%%%%%%%%%%%%%%%%%%%%%%%%%%%%%%%%%%%%%%%%%%%%%%%%%%%%%%%%%
\section{Change of module}
\label{section:change}
\noindent
The following technical lemma is required in the proofs of Proposition~\ref{prop:maps} and Lemma~\ref{lemma:subgroups}\@. We revert to Notation~\ref{setup2}\@, and consider the case of two submodules $L,N \subseteq M$ satisfying the condition $pL \subseteq N \subseteq L$.

\begin{bsp}
If $(U,f_{\bullet},W) \in \mathcal{E}$, then $W \leq T/pT$, and so $W = L/pT$ for some $pT \subseteq L \subseteq T$. Hence $pL \subseteq N \subseteq L$, since $N = pT$.
\end{bsp}

\noindent
We shall investigate the cohomology maps induced by the short exact sequence
\[
0 \rightarrow L/N \xrightarrow{\mul^r} M/p^rN \rightarrow M/p^rL \rightarrow 0 \, .
\]
As we now have to distinguish between two different projection maps, we shall denote them by $M \xrightarrow{\pro^N_r} M/p^r N$ and $M \xrightarrow{\pro^L_r} M/p^r L$.

\begin{lemma}
\label{lemma:intermediate}
Suppose the $RG$-submodule $L \subseteq M$ satisfies $pL \subseteq N \subseteq L$.
\begin{enumerate}
\item
\label{enum:intermediate-1}
Assume $r_0 \geq 1$.
Then $j_* \circ i_* = 0$ for the chain maps
\[
C^*(G,L/N) \stackrel{i_*}{\longrightarrow} \cc^*_{r_0}(G,M/p^{\bullet} N) \stackrel{j_*}{\longrightarrow} \cc^*_{r_0}(G,M/p^{\bullet} L)
\]
given by $i_n(c)_r = \mul^r(c)$ and $j_n(\alpha_{\bullet})_r = \alpha_r + p^r L$.
\item
\label{enum:intermediate-2}
Suppose that $\alpha_{\bullet} \in \zz^n_{r_0}(G,M/p^{\bullet} N)$ satisfies $j_n(\alpha_{\bullet}) \in \bb^n_{r_0}(G,M/p^{\bullet} L)$.
If $\alpha_{\bullet}$ has level $\omega \leq r_0-m$, then
\[
\alpha_{\bullet} = i_n(c)_{\bullet} + \cbdry(\pro^N_{\bullet}(\kappa + p^{\bullet -(\omega+m)} \lambda))
\]
for some $c \in Z^n(G,L/N)$, $\kappa \in C^{n-1}(G,M)$ and $\lambda \in C^{n-1}(G,L)$.
\end{enumerate}
\end{lemma}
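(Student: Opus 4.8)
The plan is to prove part \enref{intermediate-1} by a direct computation at the cochain level, and then to prove part \enref{intermediate-2} by chasing the long exact cohomology sequence attached to the short exact sequence $0 \to L/N \xrightarrow{\mul^r} M/p^rN \to M/p^rL \to 0$, combined with the \cohSeq. machinery of Section~\ref{CohSeqs}.

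For \enref{intermediate-1}: first I would check that $i_n$ and $j_n$ actually land where claimed. For $i_n$, given $c \in C^n(G,L/N)$, the sequence $r \mapsto \mul^r(c)$ is the \cohSeq. $\pro^N_\bullet(p^{\bullet}\tilde{c})$ where $\tilde{c}$ is any lift of $c$ to $C^n(G,M)$ -- here one uses $pL \subseteq N$, so that multiplication by $p^r$ carries $L/N$ into $M/p^rN$ coherently; this shows $i_n(c)_\bullet \in \cc^n_{r_0}(G,M/p^{\bullet}N)$. For $j_n$, reduction mod $p^rL$ applied term-by-term to $\pro^N_\bullet(\rho + p^{\bullet-\omega}\sigma)$ gives $\pro^L_\bullet(\rho + p^{\bullet-\omega}\sigma)$, which is again a \cohSeq.. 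Both maps visibly commute with $\cbdry$, so they are chain maps. The composite $j_n \circ i_n$ sends $c$ to the sequence whose $r$-th term is $\mul^r(c)$ reduced mod $p^rL$; but $\mul^r(c) = p^r\tilde c + p^r N \in M/p^rN$ maps to $p^r \tilde c + p^r L = 0$ in $M/p^rL$, so $j_n \circ i_n = 0$. This step is routine bookkeeping; the only subtlety is keeping the two projection maps $\pro^N_r$ and $\pro^L_r$ straight.

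For \enref{intermediate-2}, the strategy is: the hypothesis says $j_n(\alpha_\bullet)$ is a coboundary in the $M/p^\bullet L$ world, so by Lemma~\ref{lemma:coboundarySequence} (applicable since $\omega \le r_0 - m$) each $\alpha_r + p^rL \in B^n(G,M/p^rL)$; equivalently, in the long exact sequence for the fixed $r$, the image of $\alpha_r + B^n(G,M/p^rN)$ in $H^n(G,M/p^rL)$ vanishes, so $\alpha_r + B^n(G,M/p^rN)$ comes from $H^n(G,L/N)$ via $(\mul^r)_*$. One wants to realise this simultaneously for all $r$ by a single $c \in Z^n(G,L/N)$ and then measure the difference $\alpha_\bullet - i_n(c)_\bullet$, which will be a \coySeq. that is a coboundary termwise, hence a genuine coboundary $\cbdry(\beta_\bullet)$ by Lemma~\ref{lemma:coboundarySequence} again -- and chasing the level bound there ($\le \omega + m$) together with the explicit form $\beta_\bullet = \pro^N_\bullet(\kappa + p^{\bullet-(\omega+m)}\lambda)$ of a level-$(\omega+m)$ \cohSeq. with $\kappa \in C^{n-1}(G,M)$, $\lambda \in C^{n-1}(G,L)$ (note $N \subseteq L$, so the ``$N$-part'' cochain may be taken in $C^{n-1}(G,L)$) yields exactly the stated decomposition. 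Concretely: write $\alpha_\bullet = \pro^N_\bullet(\rho + p^{\bullet-\omega}\sigma)$ with $\rho \in Z^n(G,M)$, $\sigma \in C^n(G,N)$; from $\alpha_{r_1}+p^{r_1}L \in B^n(G,M/p^{r_1}L)$ for one (hence, by the lemma, every) $r_1$ one extracts $\phi \in C^{n-1}(G,M)$ and $\psi \in C^n(G,L)$ with $\rho + p^{r_1-\omega}\sigma = \cbdry(\phi) + p^{r_1}\psi$ in $C^n(G,M)$, so that $\rho - \cbdry(\phi) = p^{r_1-\omega}\sigma - p^{r_1}\psi = p^{r_1-\omega}(\sigma - p^\omega\psi)$ lies in $p^{r_1-\omega}C^n(G,L)$ and is a cocycle (since $\rho$ is); dividing by $p^{r_1-\omega}$ using regularity of $p$ and freeness gives a cocycle in $Z^n(G,L)$, and the image $c$ of $\sigma - p^\omega\psi$ in $C^n(G,L/N)$ will be the required class, with the remaining terms assembled into $\cbdry(\pro^N_\bullet(\kappa + p^{\bullet-(\omega+m)}\lambda))$ after one more application of $p^m H^n(G,-)=0$ (Remark~\ref{remark:transfer}) to kill the $L$-cocycle up to $p^m$, exactly as in the proof of Lemma~\ref{lemma:coboundarySequence}.

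The main obstacle I anticipate is the level bookkeeping: one must verify that the error term really can be written with ``offset'' $\omega + m$ and not something worse, and that splitting off $c$ does not force a larger offset. This is handled by mimicking the \enref{coboundarySequence-1}$\Rightarrow$\enref{coboundarySequence-4} argument of Lemma~\ref{lemma:coboundarySequence} closely -- the factor $p^m$ there comes precisely from $p^m H^{n-1}(G,L) = 0$ (transfer), and that is the source of the ``$+m$'' in the offset here. A secondary nuisance is ensuring $c$ is a well-defined \emph{cocycle} in $Z^n(G,L/N)$ rather than merely a cochain: this follows because $\cbdry(\sigma - p^\omega\psi)$ is divisible by $p^\omega$ (as $\alpha_\bullet \in \zz^n$ forces $\cbdry\sigma$ divisible by $p^\omega$) and in fact by a higher power after the reductions above, so its image mod $N$ vanishes once $r_0 \geq 1$ gives enough room; I would spell this out carefully since it is the one place where the hypothesis $pL \subseteq N$ (rather than just $N \subseteq L$) is genuinely used.
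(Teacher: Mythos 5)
Part~\enref{intermediate-1} of your argument is fine and essentially identical to the paper's. The gap is in part~\enref{intermediate-2}, specifically in your extraction of~$c$ and in working from a single value $r_1$~of~$r$.

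Your concrete step reads: from $\rho + p^{r_1-\omega}\sigma = \cbdry(\phi) + p^{r_1}\psi$ (with $\phi \in C^{n-1}(G,M)$, $\psi \in C^n(G,L)$) you form $\sigma - p^\omega\psi \in Z^n(G,L)$ and declare~$c$ to be its image in $C^n(G,L/N)$. But when $\omega \geq 1$ that image is automatically zero: $\sigma$ already takes values in~$N$, and $p^\omega\psi$ takes values in $p^\omega L \subseteq N$ because $pL \subseteq N$. The correct~$c$ in the paper's proof is $\pro^N_0(\bar{c})$ for $\bar{c} \in C^n(G,L)$ with $p^m\sigma = \cbdry(\lambda) + p^{\omega+m}\bar{c}$; there is no reason for $\bar{c}$ to take values in~$N$, so $c$ is genuinely nonzero in general. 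A related problem is that the single-$r_1$ equation only gives $\rho - \cbdry(\phi) \in p^{r_1-\omega}C^n(G,L)$, whereas the decomposition you want requires $\rho$ itself to be \emph{exactly} a coboundary $\cbdry(\kappa)$ of a cochain that fits into the stated $\beta_\bullet$; that exactness cannot be recovered from one value of~$r$. The remedy — which is what the paper does — is to use the full \cohSeq. identity rather than a pointwise one: invoke Lemma~\ref{lemma:coboundarySequence}\enref{coboundarySequence-4} applied to $j_n(\alpha_\bullet) \in \bb^n_{r_0}(G,M/p^\bullet L)$ to write $j_n(\alpha_\bullet) = \cbdry(\gamma_\bullet)$ with $\gamma_\bullet = \pro^L_\bullet(\kappa + p^{\bullet-(\omega+m)}\lambda)$ explicitly, then apply Lemma~\ref{lemma:identityCochainSeq} to the vanishing difference $j_n(\alpha_\bullet) - \cbdry(\gamma_\bullet)$ to deduce simultaneously $\rho = \cbdry(\kappa)$ and $p^m\sigma - \cbdry(\lambda) = p^{\omega+m}\bar{c}$ for some $\bar{c} \in C^n(G,L)$; then $c := \pro^N_0(\bar{c})$ is a cocycle and the decomposition follows. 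Finally, your closing remark misplaces where $pL \subseteq N$ is essential: it is needed in part~\enref{intermediate-1}, to see that $i_n(c)_\bullet = \pro^N_\bullet(0 + p^{\bullet - 1}\cdot p\bar{c})$ is a genuine \cohSeq. of level one (i.e.\ that $p\bar{c}$ is $N$-valued), whereas the fact that $c$ is a cocycle comes from $\alpha_\bullet \in \zz^n$ and regularity of $p$ alone.
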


\begin{proof}
\enref{intermediate-1}:
Pick $\bar{c} \in C^n(G,L)$ with $\pro^L_0(\bar{c}) = c$, then $p\bar{c} \in C^n(G,N)$ and
\[
i_n(c)_{\bullet} = \pro^N_{\bullet}(0 + p^{\bullet-1} \cdot p\bar{c}) \in \cc^n_{r_0}(G,M/p^{\bullet} N) \, , \quad \text{with level $1 \leq r_0$.}
\]
If $\alpha_{\bullet} = \pro^N_{\bullet}(\rho + p^{\bullet - \omega} \sigma) \in \cc^n_{r_0}(G,M/p^{\bullet} N)$,
then $j_n(\alpha_{\bullet}) = \pro^L_{\bullet}(\rho + p^{\bullet-\omega} \sigma)$.
\\
Clearly $i_*,j_*$ are chain maps. And $j_n i_n = 0$, since $c$ takes values in $L$.

\enref{intermediate-2}:
Let $\alpha_{\bullet} = \pro^N_{\bullet}(\rho + p^{\bullet - \omega} \sigma)$, so $j_n(\alpha_{\bullet}) = \pro^L_{\bullet}(\rho + p^{\bullet-\omega} \sigma)$. By Lemma~\ref{lemma:coboundarySequence} we have $j_n(\alpha_{\bullet}) = \cbdry(\gamma_{\bullet})$ for some $\gamma_{\bullet} \in \cc^{n-1}_{r_0}(G,M/p^{\bullet} L)$ of the form
\[
\gamma_{\bullet} = \pro^L_{\bullet}(\kappa + p^{\bullet-\omega-m} \lambda) \quad
\text{with $\kappa \in C^{n-1}(G,M)$, $\lambda \in C^{n-1}(G,L)$.}
\]
Applying Lemma~\ref{lemma:identityCochainSeq} we have $\rho = \cbdry(\kappa)$, and
\[
p^m \sigma = \cbdry(\lambda)  + p^{\omega + m} \bar{c} \quad \text{for some $\bar{c} \in C^n(G,L)$.}
\]
From $\alpha_{\bullet} \in \zz^n_{r_0}(G,M/p^{\bullet} N)$ it follows that $\cbdry(\sigma)$ takes values in $p^{\omega}N$, and so $\cbdry(\bar{c})$ takes values in $N$. 
So $c := \pro^N_0(\bar{c})$ lies in $Z^n(G, L/N)$, and $i_n(c)_r = \pro^N_r(p^r \bar{c})$. Hence
\begin{align*}
\alpha_{\bullet} & = \pro^N_{\bullet}(\rho + p^{\bullet - \omega} \sigma)
\\ & = \pro^N_{\bullet}(\cbdry(\kappa + p^{\bullet - (\omega + m)} \lambda) + p^{\bullet} \bar{c})
\\ & = i_n(c) + \cbdry (\pro_{\bullet}(\kappa + p^{\bullet - (\omega + m)} \lambda)) \, .
\qedhere
\end{align*}
\end{proof}

\ignore{
\begin{notn}
Write $L_{\omega} \cc^n_{r_0}(G,M_{\bullet})$ for the submodule of \cohSeq.s of level at most~$\omega$.
$L_{\omega} \cc^*_{r_0}(G,M_{\bullet})$ is a chain complex, since $\cbdry(L_{\omega} \cc^n_{r_0}(G,M_{\bullet})) \subseteq L_{\omega} \cc^{n+1}_{r_0}(G,M_{\bullet})$.
\end{notn}
}

%%%%%%%%%%%%%%%%%%%%%%%%%%%%%%%%%%%%%%%%%%%%%%%%%%%%%%%%%%%%%%%%%%%%%%%%%%%%%
\section{Morphisms in the Quillen category}

\begin{notn}
Consider the triple $(U,f_{\bullet},W) \in \mathcal{E}$. Recall from Section~\ref{prelim-cohom} that $G_{r+1}$ has underlying set $M_r \times P$, and that $U(f_r) = \{(-f_r(u),u) \mid u \in U \}$. So the subgroup $E_r(U,f_{\bullet},W) \leq G_{r+1}$ of Lemma~\ref{lemma:enoughElabs} is
\[
E_r(U,f_{\bullet},W) =  \{(p^r w - f_r(u), u) \mid u \in U, w \in W \} \, .
\]
Let $j_r^f \colon W \times U \rightarrow E_r(U,f_{\bullet},W)$ be the isomorphism $(w,u) \mapsto (p^r w - f_r(u), u)$.
\end{notn}

\begin{proposition}
\label{prop:maps}
Suppose $r_0 \geq 3m$ and $m \geq 1$.
For $(U,f_{\bullet},W), (U',f'_{\bullet},W') \in \mathcal{E}$ the set of isomorphisms $W \times U \rightarrow W' \times U'$ of the form
\[
\begin{CD}
W \times U @>{j_r^f}>> E_r(U,f_{\bullet},W) @>{\text{conjugation in $G_{r+1}$}}>> E_r(U',f'_{\bullet},W') @>{(j_r^{f'})^{-1}}>> W' \times U'
\end{CD}
\]
is independent of~$r$.
\end{proposition}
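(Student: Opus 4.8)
The plan is to compute the composite isomorphism $W \times U \to W' \times U'$ explicitly in terms of the conjugating element of $G_{r+1}$ and then show that the resulting formula does not actually involve $r$. First I would fix $r \geq r_0$ and an element $g = (t,x) \in G_{r+1}$ such that conjugation by $g$ carries $E_r(U,f_{\bullet},W)$ onto $E_r(U',f'_{\bullet},W')$. Writing the generic element of $E_r(U,f_{\bullet},W)$ as $(p^r w - f_r(u), u)$ and conjugating by $(t,x)$ using the multiplication law $(t_1,g_1)(t_2,g_2) = (t_1 + {}^{g_1}t_2 + \tau_r(g_1,g_2), g_1g_2)$ from Section~\ref{prelim-cohom}, I get an element of $G_{r+1}$ whose $P$-component is $\leftexp{x}{u} := x u x^{-1}$ (so $x$ normalizes $U$ and conjugation induces an isomorphism $U \to U'$ independent of $r$, since $U,U' \leq P$), and whose $M_r$-component is a sum of terms: $p^r \cdot \leftexp{x}{w}$, a term coming from $-\leftexp{x}{f_r(u)}$, and cocycle-correction terms involving $\tau_r$, $t$ and the conjugation. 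Matching this against the generic element $(p^r w' - f'_r(u'), u')$ of the target should force $w' = \leftexp{x}{w} + (\text{something in } T/pT)$ and $u' = \leftexp{x}{u}$.

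The key point is that $g = (t,x)$ itself may depend on $r$, but only its "shadow" data should matter. Concretely, I would argue that $x \in P$ can be chosen independent of $r$ (it is determined up to $M_r$ by the requirement that it conjugate $U$ to $U'$ and is compatible with the $W$'s), and that the $M_r$-component $t$ enters the final formula only through controlled reductions. The multiplication by $p^r$ in the identification $j_r^f$ is exactly what strips away the $r$-dependence: the $W$-component of the image is read off after dividing by $p^r$, i.e. after applying $(\mul^r)^{-1}$ to the $\Omega_1$-part, so a term like $p^r \leftexp{x}{w}$ contributes $\leftexp{x}{w}$ regardless of $r$, while any term of $t$ that is divisible by $p^r$ contributes a fixed element of $T/pT$ and any term not divisible by $p^r$ would violate the requirement that conjugation preserve the elementary abelian subgroups. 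To make "independent of $r$" precise I would use the \cohSeq.\ formalism: since $f_{\bullet}, f'_{\bullet}$ are cohain sequences of bounded level and $\tau_{\bullet} \in \zz^2_{r_0}(P,M_{\bullet})$, the relevant cocycle identities hold simultaneously for all $r \geq r_0$, and Lemma~\ref{lemma:coboundarySequence} together with Lemma~\ref{lemma:intermediate} (applied with $L/pT = W$, $W'$ as in the Example) lets me promote an equality valid at one level $r$ to an equality of cochain sequences. This is where the hypothesis $r_0 \geq 3m$ is used: it leaves enough room ($\omega \leq r_0 - m$, etc.) to apply those lemmas to the difference sequences.

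The main obstacle I anticipate is bookkeeping the three sources of the $M_r$-component after conjugation — the conjugate of $-f_r(u)$, the $\tau_r$-correction terms, and the contribution of $t$ — and separating, within $t$, the part that is forced to lie in $p^r T$ (hence descends to a fixed element of $T/pT$ after $(\mul^r)^{-1}$) from the part lying in $T/p^{r+1}T$ that pairs with the $U$-conjugation. I would handle this by first treating the "pure $W$" sub-case ($U = U' = 1$), where the statement reduces to: conjugation in $G_{r+1}$ carries $\mul^r(W)$ to $\mul^r(W')$ iff $\leftexp{x}{W} = W'$ (as subgroups of $T/pT$), and the induced map $W \to W'$ is $w \mapsto \leftexp{x}{w}$, visibly independent of $r$; this follows from Remark~\ref{rk:firstW} and the fact that $M_r \leq G_{r+1}$ acts trivially on $\Omega_1(M_r)$. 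Then I would treat the "pure $U$" sub-case ($W = W' = 1$), where $E_r = U(f_r)$ and $E_r' = U'(f'_r)$, using Lemma~\ref{lemma:preUsplit} and Lemma~\ref{lemma:Usplit}: a conjugation $U(f_r) \to U'(f'_r)$ in $G_{r+1}$ amounts to an element $x \in N_P(U,U')$ together with the statement that $f_r$ and a transported version of $f'_r$ differ by a coboundary in $C^1(U,M_r)$ that is induced by an element of $M_r$, and the cochain-sequence version of this (valid for all $r$ by Lemma~\ref{lemma:coboundarySequence}) gives $r$-independence of the resulting bijection $U \to U'$, which is just $u \mapsto \leftexp{x}{u}$ on underlying sets and carries $f_r$-data to $f'_r$-data coherently in $r$. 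Finally I would assemble the general case from the direct product decomposition $E_r(U,f_{\bullet},W) = \mul^r(W) \times U(f_r)$, noting that a general conjugating element respects this decomposition up to the abelian structure, and conclude that the composite $W \times U \to W' \times U'$ is $(w,u) \mapsto (\leftexp{x}{w} + \delta, \leftexp{x}{u})$ for a fixed $\delta \in W'$ and fixed $x$, all independent of $r$.
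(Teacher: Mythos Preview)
Your overall strategy---compute the conjugation explicitly and show the resulting formula is $r$-free, using Lemmas~\ref{lemma:coboundarySequence} and~\ref{lemma:intermediate} to promote a one-$r$ identity to all~$r$---is the same as the paper's. But the decomposition into ``pure $W$'' and ``pure $U$'' sub-cases followed by assembly does not work, and your claimed final formula is wrong.

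The issue is a cross term you have not accounted for. Conjugation by $(t,x)$ sends $(p^r w - f_r(u),u)$ to an element whose image in $W' \times U'$ under $(j_r^{f'})^{-1}$ has the form
\[
(w,u) \longmapsto \bigl({}^x w - \pi({}^x u),\, {}^x u\bigr)
\]
for some $\pi \in Z^1({}^xU, {}^xW)$, \emph{not} $(w,u) \mapsto ({}^x w + \delta, {}^x u)$ with a constant $\delta$. Even $M_r$-conjugation alone (so $x=1$, your ``pure'' setting) already produces maps $(w,u) \mapsto (w - z(u),u)$ with $z \in Z^1(U,W)$; see Lemma~\ref{lemma:AutT}. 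So the $W$-component of the image genuinely depends on $u$, and the direct-product splitting $E_r = \mul^r(W) \times U(f_r)$ is not respected by conjugation in any way that lets you treat the factors independently.

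What the paper does instead is fix $x$ and the target cocycle $\pi$, and ask for which $r$ there exists $t \in M_r$ realising that particular $F$. The condition turns out to be $p^r\pi - \chi_r \in B^1({}^xU,M_r)$, where $\chi_r$ is an explicit $1$-cocycle built from $f_r$, $f'_r$, and the values $\tau_r(x,-)$, $\tau_r(-,x)$ (this is Lemma~\ref{lemma:chi}, which you do not mention and which does the bookkeeping you anticipated as the ``main obstacle''). One checks that $\chi_\bullet$ is a \coySeq.\ of level at most $2m$; then Lemma~\ref{lemma:coboundarySequence} (with $L = {}^x\bar W$) makes the \emph{necessary} condition on $\chi_\bullet$ $r$-independent, and Lemma~\ref{lemma:intermediate} produces a specific $\pi$ that works for all $r$. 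Finally Lemma~\ref{lemma:AutT} shows that the remaining freedom (the $M_r$-automorphisms) is also $r$-independent. Your proposal has the right lemmas queued up but is missing the $\chi_\bullet$ step and has the wrong shape for $F$; once you correct both, you will essentially be reproducing the paper's argument.
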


\noindent
For the proof we need two lemmas. Observe that $\mul^{r-r_0}$ embeds $M_{r_0} = T/p^{r_0+1} T$ in~$G_{r+1}$ as $p^{r-r_0} T/p^{r+1}T \leq M_r$.

\begin{lemma}
\label{lemma:AutT}
Suppose that $m \geq 1$ and $r_0 \geq 2m$.
Let $(U,f_{\bullet},W) \in \mathcal{E}$. 
\begin{enumerate}
\item
\label{enum:AutT-3}
$\Aut_{M_r}(E_r(U,f_{\bullet},W)) = \Aut_{\mul^{r-r_0}(M_{r_0})}(E_r(U,f_{\bullet},W))$.
\item
\label{enum:AutT-4}
The subgroup
\[
\mathcal{N} = \{ x \in M_{r_0} \mid \mul^{r-r_0}(x) \in N_{G_{r+1}}(E_r(U,f_{\bullet},W)) \}
\]
depends on neither $r$~nor $f_{\bullet}$. Nor does the action of $\mathcal{N}$ on $W \times U$ obtained by using $j^f_r$ to identify $E_r(U,f_{\bullet},W)$ with $W \times U$.
\end{enumerate}
\end{lemma}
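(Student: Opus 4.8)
The goal is to prove Lemma~\ref{lemma:AutT}, so I would work directly with the explicit description of $E_r(U,f_{\bullet},W)$ inside $G_{r+1}$ and track how conjugation by an element of $M_r$ acts. Recall $G_{r+1}$ has underlying set $M_r \times P$ with the cocycle multiplication, and $M_r$ sits inside as $\{(t,1) \mid t \in M_r\}$, which is an abelian normal subgroup. For $x \in M_r$ the conjugation $(t,1)(p^rw - f_r(u), u)(t,1)^{-1}$ computes in $M_r \times P$ to $(p^rw - f_r(u) + t - {}^{u}t, u)$; the point is that $t - {}^{u}t = (1-u)t$ depends only on the image of $t$ in $M_r/(\text{stuff killed by }1-u\text{ on all }u\in U)$, and more importantly that $(1-u)t$ already lies in a small fixed piece. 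This is where the uniserial structure and the hypothesis $r_0 \geq 2m$ enter: the submodule $\sum_{u\in U}(1-u)M_r$ is contained in $p^{\text{something}}M_r$, in fact (since $m = \log_p|S_\ell|$ bounds the nilpotency) one gets $(1-u)M_r \subseteq \mul^{r-m}(M_m) \subseteq \mul^{r-r_0}(M_{r_0})$ when $r_0 \geq 2m$.

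\textbf{Part \enref{AutT-3}.} The inclusion $\Aut_{\mul^{r-r_0}(M_{r_0})}(E_r) \subseteq \Aut_{M_r}(E_r)$ is immediate. For the reverse, suppose $x \in M_r$ induces an automorphism of $E_r(U,f_{\bullet},W)$ by conjugation. For each generator $(p^r w - f_r(u), u)$ the conjugate differs from it by $(1-u)x$ in the $M_r$-coordinate, and $(1-u)x \in \mul^{r-r_0}(M_{r_0})$ by the uniserial bound above (using $r_0 \geq 2m$, $m \geq 1$); write $x = x' + x''$ with $x'' \in \mul^{r-r_0}(M_{r_0})$ and $x'$ a lift of a fixed set of coset representatives, so $(1-u)x = (1-u)x''$ plus a term that... — actually the cleanest formulation is: conjugation by $x$ and conjugation by any $x$-translate that differs by something annihilating all $1-u$ give the same automorphism, and one checks every such automorphism is already realized by an element of $\mul^{r-r_0}(M_{r_0})$ because the "effective part" $x \mapsto ((1-u)x)_{u \in U}$ factors through $M_r \twoheadrightarrow M_r/\bigcap_u \ker(1-u)$, whose image lies inside $\mul^{r-r_0}(M_{r_0})$ since $(1-u)M_r$ does. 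I would spell this out by choosing, for each $x$, an $\tilde{x} \in \mul^{r-r_0}(M_{r_0})$ with $(1-u)\tilde{x} = (1-u)x$ for all $u \in U$ (possible because $(1-u)M_r \subseteq \mul^{r-r_0}(M_{r_0})$ and $\mul^{r-r_0}$ restricted there is surjective onto that piece), and noting $\tilde{x}$ and $x$ induce the same conjugation on $E_r$.

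\textbf{Part \enref{AutT-4}.} For $x \in M_{r_0}$, membership $\mul^{r-r_0}(x) \in N_{G_{r+1}}(E_r(U,f_{\bullet},W))$ means: for each $u \in U$, $w \in W$ there are $u' \in U$, $w' \in W$ with $(1-u)\mul^{r-r_0}(x) = (p^rw' - p^rw) - (f_r(u') - f_r(u))$ in $M_r$ and $u' = u$ (comparing $P$-coordinates forces $u'=u$ since $U$ embeds), hence $w' = w$, hence the condition is simply $(1-u)\mul^{r-r_0}(x) \in p^r T/p^{r+1}T = \Omega_1(M_r)$ for all $u \in U$ — wait, more precisely $(1-u)\mul^{r-r_0}(x)$ must lie in $\mul^r(T/pT)$ and be of the form $p^r(w'-w) = 0$... let me restate: the $M_r$-coordinate change must keep us in $E_r$, and since the $P$-coordinate is unchanged the change must be $0$, i.e. $(1-u)\mul^{r-r_0}(x) = 0$ in $M_r$ for all $u \in U$. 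But $\mul^{r-r_0}$ is injective on the relevant range and commutes with the $P$-action, so this says $(1-u)x$ lies in $\ker(\mul^{r-r_0}) = p^{r_0+1-(r-r_0)}\cdots$ — the upshot is that the condition on $x$ becomes independent of $r$ once $r \geq r_0$ because $\mul^{r-r_0}$ is injective on $M_{r_0}$ (as $M_{r_0} = T/p^{r_0+1}T$ and $\mul^{r-r_0}$ sends it isomorphically onto $p^{r-r_0}T/p^{r+1}T$ provided $r-r_0 \leq r_0 + 1$; for larger $r$ one iterates, but injectivity holds because $T$ is $p$-torsion-free). Crucially the condition $(1-u)x = 0$ (for all $u \in U$) mentions neither $f_{\bullet}$ nor $r$, so $\mathcal{N}$ is as claimed. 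Finally, for the action: identifying $E_r$ with $W \times U$ via $j^f_r$, conjugation by $\mul^{r-r_0}(x)$ sends $(w,u) \mapsto (w,u)$ modified in the $M_r$-coordinate by $(1-u)\mul^{r-r_0}(x)$; decomposing $(1-u)\mul^{r-r_0}(x) = \mul^r(w_x(u))$ for a well-defined $w_x(u) \in T/pT$ (it lies in $\Omega_1$, hence in the image of $\mul^r$, by the $r_0 \geq 2m$ uniserial estimate), one reads off that $(w,u) \mapsto (w + w_x(u), u)$, and $w_x(u)$ depends only on $x$ and $u$ — not on $f_{\bullet}$ (the $f_r$ terms cancel) nor on $r$ (since $w_x(u)$ is extracted through the fixed isomorphism $(\mul^r)^{-1}$ and $(1-u)\mul^{r-r_0}(x) = \mul^{r-r_0}((1-u)x)$, with $(1-u)x \in M_{r_0}$ independent of $r$, landing in $\mul^{r-r_0}(\Omega_1(M_{r_0})) = \mul^r(\Omega_1(M_0))$-ish after the identification). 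I would make the last identification precise using the relation $\mul^r \circ \mul^s = \mul^{r+s}$ and the fact that for $v$ in the top layer, $\mul^{r-r_0}(\mul^{r_0}(v)) = \mul^r(v)$.

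\textbf{Main obstacle.} The substantive point is the uniserial/nilpotency estimate that $(1-u)M_r \subseteq \mul^{r-r_0}(M_{r_0})$ (equivalently $(1-u)T \subseteq p^{r_0}T$-type containment scaled down), which is what makes everything collapse into the bottom layers where $\mul$-maps are isomorphisms and $r$ drops out; getting the exact exponent right — that $r_0 \geq 2m$ (and not just $r_0 \geq m$) is what is needed, because the "effective part" $(1-u)x$ must land inside $\mul^{r-r_0}(M_{r_0})$ and we need $r - r_0 \geq m$ for that — is the one place where the hypotheses must be used with care. Everything else is bookkeeping in the explicit cocycle model of $G_{r+1}$, checking that $P$-coordinates match (forcing $u' = u$), that $W$-components are forced to agree, and that the $f_r$-terms cancel in all conjugation computations.
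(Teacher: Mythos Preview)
Your approach has a genuine error at its core. The claim that $(1-u)M_r \subseteq \mul^{r-r_0}(M_{r_0})$ for all $u \in U$ --- your ``uniserial/nilpotency bound'' --- is false. Unwinding the definitions, this would say $(1-u)T \subseteq p^{r-r_0}T$ for every $r \geq r_0$, hence $(1-u)T = 0$, i.e.\ $U$ acts trivially on~$T$. But it typically does not: already in the maximal class example of Section~\ref{subsection:Gp}, $\tau$ acts on $T = \mathcal{O}$ as multiplication by $\theta$, so $(1-\tau)T = (\theta-1)\mathcal{O}$ has index~$p$ in~$T$. The integer $m = \log_p\abs{P}$ bounds the order of~$P$, not the depth of $(1-u)T$ inside~$T$.

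This error propagates into part~\enref{AutT-4}. You argue that $u' = u$ forces $w' = w$ and hence that normalizing means $(1-u)\,\mul^{r-r_0}(x) = 0$. That step is wrong: the conjugate of $(p^r w - f_r(u),u)$ must only land in $E_r$, so the condition is $(1-u)\,\mul^{r-r_0}(x) \in \mul^r(W)$, not $=0$. Your description of $\mathcal{N}$ would make it equal to $(M_{r_0})^U$ and in particular independent of~$W$, which it is not (the lemma only asserts independence of $r$ and $f_{\bullet}$). You half-notice this later when you write $(1-u)\,\mul^{r-r_0}(x) = \mul^r(w_x(u))$, but that contradicts your own earlier conclusion.

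The paper's mechanism is entirely different and does not bound $(1-u)t$ for arbitrary~$t$. Instead: if $t$ normalizes $E_r$ then $\cbdry(t)$ takes values in $p^r\bar{W}$, so $\cbdry(t) \in p^r Z^1(U,\bar{W})$. Now the transfer bound $p^m H^1(U,\bar{W}) = 0$ (Remark~\ref{remark:transfer}) gives $p^r Z^1(U,\bar{W}) \subseteq p^{r-m} B^1(U,\bar{W})$, so $\cbdry(t) = \cbdry(p^{r-m}\bar{v})$ for some $\bar{v} \in \bar{W}$; the element $\pro_r(p^{r-m}\bar{v}) \in \mul^{r-r_0}(M_{r_0})$ then has the same conjugation action as~$t$. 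For~\enref{AutT-4} one checks directly that the normalizing condition on $v \in M_{r_0}$ reads $\cbdry(v) = \mul^{r_0}(z)$ for some $z \in Z^1(U,W)$, with resulting action $(w,u) \mapsto (w - z(u),u)$ --- visibly independent of $r$ and~$f_{\bullet}$. The hypothesis $r_0 \geq 2m$ is used to ensure $r - m \geq r_0 - m \geq m \geq 0$ and that $p^{r_0-m}\bar{v}$ makes sense in $M_{r_0}$, not for any bound on $(1-u)T$.
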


\noindent
Write $\bar{W}$ for the module $pT \subseteq \bar{W} \subseteq T$ with $W = \bar{W}/pT$.

\begin{proof}
\enref{AutT-3}:
Conjugation by $t \in T$ fixes $M_r U(f_r)/M_r$ and $W$ pointwise, and is described by $\cbdry(t) \in B^1(U,M_r)$:
\[
{}^{(t,0)}(p^r w - f_r(u), u) = (p^r w - \cbdry(t)(u) - f_r(u), u) \, .
\]
If $t$ normalizes $\mul^r(W) \times U(f_r) \leq G_{r+1}$ then $\cbdry(t)$ must take values in $p^r \bar{W}$. Hence $\cbdry(t) \in p^r Z^1(U,\bar{W}) \subseteq p^{r-m} B^1(U,\bar{W})$. So there is $\bar{v} \in \bar{W}$ such that $\cbdry(t) = p^{r-m} \cbdry (\bar{v})$, and $\pro_r(p^{r-m} \bar{v}) = \mul^{r-r_0} \pro_{r_0}(p^{r_0-m} \bar{v}) \in \mul^{r-r_0}(M_{r_0})$ has the same conjugation action as $t$.

\enref{AutT-4}: Conversely if $v = \bar{v} + p^{r_0+1} T \in M_{r_0}$ then $\mul^{r-r_0} (v)$ normalizes $\mul^r(W) \times U(f_r)$ if and only if $\cbdry(p^{r-r_0} \bar{v}) \in Z^1(U,T)$ takes values in $p^r \bar{W}$, that is if $\cbdry(v) = \mul^{r_0}(z)$ for some $z \in Z^1(U,W)$. And the action on $W \times U$ is then $(w,u) \mapsto (w - z(u),u)$.
\end{proof}

\begin{lemma}
\label{lemma:chi}
Suppose $r_0 \geq 2m$ and $g \in P$.
Let $(U,f_{\bullet},W), ({}^gU,f'_{\bullet},{}^gW) \in \mathcal{E}$. Define $\chi_r \in C^1({}^gU,M_r)$ by
\[
\chi_r(v) = ({}^gf_r)(v) - \tau_r(g,v^g) + \tau_r(v,g) - f'_r(v) \, .
\]
Then $\chi_{\bullet} \in \zz^1_{r_0}({}^gU,M_{\bullet})$ is a \coySeq. of level at most $2m$.
\end{lemma}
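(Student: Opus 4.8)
The statement to establish is Lemma~\ref{lemma:chi}: with $(U,f_{\bullet},W)$ and $({}^gU,f'_{\bullet},{}^gW)$ in $\mathcal{E}$, the cochain $\chi_r(v) = ({}^gf_r)(v) - \tau_r(g,v^g) + \tau_r(v,g) - f'_r(v)$ defines a cochain sequence $\chi_{\bullet} \in \zz^1_{r_0}({}^gU,M_{\bullet})$ of level at most $2m$. There are really two things to check: that $\chi_{\bullet}$ \emph{is} a cochain sequence (i.e. arises from some $(\rho,\sigma;r_0,\omega)$) of the claimed level, and that each $\chi_r$ is a cocycle. My plan is to treat the cocycle condition first, since it is the conceptual heart, and then assemble the cochain-sequence bookkeeping.

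\textbf{Step 1: $\chi_r$ is a $1$-cocycle.} The natural approach is to recognise $\chi_r$ as a difference of two $1$-cochains on ${}^gU$ each of which has coboundary equal to $\tau_r\vert_{{}^gU}$; then $\chi_r \in Z^1({}^gU,M_r)$ by Lemma~\ref{lemma:preUsplit}. On one side, $f'_r$ satisfies $\cbdry(f'_r) = \tau_r\vert_{{}^gU}$ by the definition of $\mathcal{E}$. On the other side, the cochain $v \mapsto ({}^gf_r)(v) - \tau_r(g,v^g) + \tau_r(v,g)$ should be exactly the cochain describing the conjugate lift ${}^g(U(f_r))$ of ${}^gU$: conjugating the lift $U(f_r) \leq G_{r+1}$ by the element $(0,g)$ produces a lift of ${}^gU$, and writing out the group multiplication $(t_1,g_1)(t_2,g_2) = (t_1 + {}^{g_1}t_2 + \tau(g_1,g_2), g_1g_2)$ from the extension-theory recollection, one computes ${}^{(0,g)}(-f_r(u),u) = (-({}^gf_r(v)) + \tau_r(g,v^g) - \tau_r(v,g)\,,\,v)$ where $v = u^g$; hence this lift is ${}^gU(\psi_r)$ for $\psi_r(v) = ({}^gf_r)(v) - \tau_r(g,v^g) + \tau_r(v,g)$, and lifts have coboundary equal to the restriction of the extension cocycle, so $\cbdry(\psi_r) = \tau_r\vert_{{}^gU}$. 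Therefore $\chi_r = \psi_r - f'_r$ is a cocycle. The one point requiring care here is getting the cocycle-index conventions and the $g$ versus $g^{-1}$ bookkeeping exactly right in the conjugation computation; this is the routine calculation I would not grind through but would verify.

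\textbf{Step 2: $\chi_{\bullet}$ is a cochain sequence of level $\leq 2m$.} Here I would exploit linearity of the construction (Lemma~\ref{lemma:moduleCochainSeq}) together with compatibility of everything with $\pro_r$. Write $\tau_{\bullet} = \pro_{\bullet}(\rho + p^{\bullet-m}\sigma)$, $f_{\bullet} = \pro_{\bullet}(a + p^{\bullet-\omega}b)$ and $f'_{\bullet} = \pro_{\bullet}(a' + p^{\bullet-\omega'}b')$ with $\omega,\omega' \leq 2m$ and the data in the appropriate cochain modules over $U$, ${}^gU$. Since $g$-conjugation ${}^g(-)$ and the maps $v \mapsto \rho(g,v^g)$, $v \mapsto \rho(v,g)$ are all $R$-linear maps of cochains that commute with $\pro_r$, each of the four summands defining $\chi_r$ is itself the $r$-th term of a cochain sequence: ${}^gf_{\bullet}$ from ${}^g(a+p^{\bullet-\omega}b)$, the two $\tau$-terms from applying the respective linear map to $(\rho + p^{\bullet-m}\sigma)$ (level $\leq m$), and $f'_{\bullet}$ at level $\leq 2m$. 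By Lemma~\ref{lemma:moduleCochainSeq} their alternating sum $\chi_{\bullet}$ is a cochain sequence, and its level is at most the maximum of the four levels, hence at most $2m$. Combined with Step~1, $\cbdry(\chi_{\bullet}) = 0$ (each $\chi_r$ being a cocycle), so $\chi_{\bullet} \in \zz^1_{r_0}({}^gU,M_{\bullet})$.

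\textbf{Main obstacle.} The genuine content is Step~1 --- specifically, correctly identifying $v \mapsto ({}^gf_r)(v) - \tau_r(g,v^g) + \tau_r(v,g)$ as the cochain of the conjugate lift ${}^g(U(f_r))$, so that Lemma~\ref{lemma:preUsplit} can be invoked to conclude $\chi_r$ is a cocycle. Everything else is formal: the cochain-sequence and level claims follow mechanically from $R$-linearity and $\pro_r$-compatibility of the four operations involved. I expect the write-up to be short once the conjugation formula in the extension $G_{r+1} = M_r.P$ is pinned down with the sign and index conventions of the paper's normalized standard resolution.
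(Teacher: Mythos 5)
Your proof is correct, but it routes Step~1 differently from the paper. The paper shows $\chi_r \in Z^1({}^gU,M_r)$ by a bare\nobreakdash-hands computation: it expands $\cbdry(\chi_r)(v_1,v_2)$ using $\cbdry(f'_r)=\tau_r\vert_{{}^gU}$ and $\cbdry({}^gf_r)={}^g(\tau_r\vert_U)$, then recognises the resulting eight-term sum as $\cbdry(\tau_r)(g,v_1^g,v_2^g) - \cbdry(\tau_r)(v_1,g,v_2^g) + \cbdry(\tau_r)(v_1,v_2,g)$, which vanishes because $\tau_r$ is a cocycle. You instead identify $\psi_r(v) = ({}^gf_r)(v) - \tau_r(g,v^g) + \tau_r(v,g)$ as the cochain of the conjugate lift ${}^{(0,g)}U(f_r) = ({}^gU)(\psi_r)$, deduce $\cbdry(\psi_r)=\tau_r\vert_{{}^gU}$ from the extension-theoretic fact that lifts correspond to cochains with coboundary equal to the extension cocycle, and conclude $\chi_r = \psi_r - f'_r \in Z^1$. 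I checked the conjugation computation you deferred (using the cocycle identity at $(v,g,g^{-1})$ to absorb the $\tau_r(g,g^{-1})$ term coming from $(0,g)^{-1}$), and it does produce your claimed formula --- though note a small convention slip: the second coordinate of ${}^{(0,g)}(-f_r(u),u)$ is $v = {}^gu$, not $v = u^g$; you want $u = v^g$, which is what actually appears inside $\tau_r(g,\cdot)$. Your Step~2, using $R$-linearity and $\pro_r$-equivariance of each of the four building operations together with Lemma~\ref{lemma:moduleCochainSeq} to bound the level by $\max(2m,m,m,2m)=2m$, is the same observation the paper compresses into ``Everything is of level at most $2m$.'' Net comparison: your Step~1 is more conceptual and explains \emph{why} $\chi_r$ is a cocycle (it measures the defect between two lifts of ${}^gU$), at the cost of a conjugation computation that is roughly as long as the paper's direct expansion of $\cbdry(\chi_r)$; the paper's route is shorter to write down but more opaque.
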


\begin{proof}
For $c \in C^n(H,M)$ we of course define ${}^gc \in C^n({}^gH,M)$ by
\[
({}^gc)(k_1,\ldots,k_n) = {}^g c(k_1^g,\ldots,k_n^g) \, .
\]
Everything is of level at most $2m$. Since $\cbdry(f'_{\bullet}) = \tau_{\bullet}\vert_{{}^gU}$ and $\cbdry({}^gf_{\bullet}) = {}^g (\tau_{\bullet}\vert_U)$ we have
\begin{align*}
\cbdry(\chi_r)(v_1,v_2) & = {}^g \tau_r(v_1^g, v_2^g) - \tau_r(g,v_2^g) + \tau_r(g,v_1^g v_2^g) - \tau_r(g,v_1^g) \\
& \qquad {} + \tau_r(v_2,g) - \tau_r(v_1v_2,g) + \tau_r(v_1,g) - \tau_r(v_1,v_2) \\
& =  \cbdry(\tau_r)(g,v_1^g,v_2^g) - \cbdry (\tau_r)(v_1,g,v_2^g) + \cbdry(\tau_r)(v_1,v_2,g) = 0 \, .
\qedhere
\end{align*}
\end{proof}

\begin{proof}[Proof of Proposition~\ref{prop:maps}]
If ${}^{(t,g)}E_r(U,f_{\bullet},W) = E_r(U',f'_{\bullet},W')$ then ${}^gU = U'$ and ${}^gW = W'$. So we assume that $g \in P$ is fixed, and consider which $t \in M_r$ satisfy ${}^{(t,g)}E_r(U,f_{\bullet},W) = E_r({}^gU,f'_{\bullet},{}^gW)$, and which isomorphisms arise in this way.

The map $F \colon W \times U \rightarrow {}^g W \times {}^gU$ given by $j_r^f$, then conjugation by $(t,g)$, and then $(j_r^{f'})^{-1}$ must have the form $F(w,u) = ({}^gw - \pi({}^gu), {}^g u)$ for some $\pi \in Z^1({}^gU,{}^gW)$. So we consider $g,\pi$ to be fixed and ask for which values of~$r$ there is $t + p^{r+1}T \in M_r$ realising this~$F$.
The condition on $t,i$ can be phrased thus:
\[
(t,g) (p^r w - f_r(u), u) = (p^r \cdot {}^gw - p^r \pi({}^gu) - f'_r({}^gu), {}^gu) (t,g)
\]
Equality in ${}^g U$ is immediate. We are left with the following condition in~$M_r$:
\[
t + p^r\cdot {}^g w - {}^g(f_r(u)) + \tau_r(g,u) = p^r \left({}^gw - \pi({}^gu)\right) - f'_r({}^gu) + {}^{({}^gu)} t + \tau_r({}^gu, g) \, .
\]
So with $\chi_r$ as in Lemma~\ref{lemma:chi} we have
\begin{align*}
p^r \pi({}^g u) & = ({}^g f)({}^g u) - \tau_r(g,u) + \tau_r({}^g u,g) - f'_r({}^g u) + \cbdry(t)({}^g u) \\ & = (\chi_r + \cbdry(t))({}^g u) \, .
\end{align*}
That is, $F$ is realisable for this~$r$ if and only if
\begin{equation}
\label{eqn:pi}
p^r \pi - \chi_r \in B^1({}^g U, M_r) \, .
\end{equation}
Since $\pi$ takes values in ${}^g W = {}^g \bar{W} / pT$, a necessary condition for any such $F$ to be realisable is that
\begin{equation}
\label{eqn:necCondn}
\chi_r + p^r \cdot {}^g \bar{W} \in B^1({}^g U, T/p^r \cdot {}^g \bar{W}) \, .
\end{equation}
Since $\chi_{\bullet} \in \zz^1_{r_0}({}^gU,M_{\bullet})$ has level at most $2m$ and $r_0 \geq 2m+m$, we deduce from Lemma~\ref{lemma:coboundarySequence} that~\eqref{eqn:necCondn} is either satisfied for all~$r_0$, or for none.

If~\eqref{eqn:necCondn} is satisfied then we apply Lemma~\ref{lemma:intermediate} with $G = {}^gU$, $\alpha_{\bullet} = \chi_{\bullet}$ and $L = {}^g\bar{W}$, hence $L/N = {}^g W$. Note that $\chi_{\bullet}$ has level at most $2m \leq r_0-m$. We conclude that there are $c \in Z^1({}^gU,{}^gW)$, $\kappa \in C^0({}^gU, T)$ and $\lambda \in C^0({}^gU,{}^g\bar{W})$ with $\chi_{\bullet} = \mul^{\bullet} (c) + \cbdry(\pro_{\bullet}(\kappa + p^{\bullet-3m}\lambda))$.
We conclude that if we take $\pi = c$ then Eqn.~\eqref{eqn:pi} is solvable for all~$r$. That is, this one map $F \colon W \times U \rightarrow {}^g W \times {}^gU$ is independent of~$r$. But all other maps for this value of~$g$ correspond to a $M_r$-automorphism of $U \times W$ followed by~$F$, and we saw in Lemma~\ref{lemma:AutT} that these isomorphisms are independent of~$r$ too. 
\end{proof}

\begin{corollary}
\label{coroll:conjugacySequence}
Suppose $e \geq 3m$ and $m \geq 1$.
For $(U,f_{\bullet},W), (U',f'_{\bullet},W') \in \mathcal{E}$ the following statements are equivalent:
\begin{enumerate}
\item
$E_r(U,f_{\bullet},W)$ and $E_r(U',f'_{\bullet},W')$ are $G_{r+1}$-conjugate for some~$r$.
\item
$E_r(U,f_{\bullet},W)$ and $E_r(U',f'_{\bullet},W')$ are $G_{r+1}$-conjugate for every~$r$.
\end{enumerate}
\end{corollary}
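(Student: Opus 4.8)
\textbf{Proof proposal for Corollary~\ref{coroll:conjugacySequence}.}

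The plan is to deduce this from Proposition~\ref{prop:maps}, the Orbit–Stabilizer phenomenon being the whole content. First I would note that $E_r(U,f_{\bullet},W)$ and $E_r(U',f'_{\bullet},W')$ are $G_{r+1}$-conjugate if and only if there is some $(t,g) \in G_{r+1}$ with ${}^{(t,g)}E_r(U,f_{\bullet},W) = E_r(U',f'_{\bullet},W')$; as observed at the start of the proof of Proposition~\ref{prop:maps}, such a $(t,g)$ forces ${}^gU = U'$ and ${}^gW = W'$. So the pair $(U,W)$ and $(U',W')$ must lie in the same $P$-orbit, a condition visibly independent of $r$; if they are not $P$-conjugate then neither (1) nor (2) holds and we are done. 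Assuming they are, fix $g \in P$ with ${}^gU = U'$, ${}^gW = W'$; then $E_r(U,f_{\bullet},W)$ and $E_r(U',f'_{\bullet},W')$ are $G_{r+1}$-conjugate if and only if they are conjugate by an element of the form $(t,g)$ with $t \in M_r$, i.e.\ if and only if the set of isomorphisms $W \times U \to W' \times U'$ constructed in Proposition~\ref{prop:maps} (for this fixed $g$) is nonempty.

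Now I invoke Proposition~\ref{prop:maps} directly: it asserts that this set of isomorphisms is \emph{independent of $r$} (as a subset of the isomorphisms $W \times U \to W' \times U'$), provided $r_0 \geq 3m$ and $m \geq 1$ — which holds here since $e \geq 3m$ and we take $r_0 = e$. In particular the set is nonempty for one value of $r \geq e$ if and only if it is nonempty for every value of $r \geq e$. Translating back, $E_r(U,f_{\bullet},W)$ and $E_r(U',f'_{\bullet},W')$ are conjugate by some $(t,g)$ for one $r$ iff for all $r$. Ranging over the finitely many $g$ in a set of $P$-orbit representatives (or simply over all $g \in P$ with ${}^gU = U'$, ${}^gW = W'$), and using that a union of $r$-independent sets is $r$-independent, gives the equivalence of (1) and (2).

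The only mild subtlety — and the step I expect to be the main obstacle — is making precise the claim that ``$G_{r+1}$-conjugacy of the two subgroups $\Leftrightarrow$ the Proposition's isomorphism set is nonempty for some $g$''. One direction is immediate: a conjugating $(t,g)$ produces, via $j_r^f$ and $(j_r^{f'})^{-1}$, an isomorphism in the set for that $g$. Conversely, one must check that an isomorphism in the set arises from an \emph{actual} conjugating element $(t,g) \in G_{r+1}$, not merely from an abstract isomorphism; but this is exactly how the set is defined in the statement of Proposition~\ref{prop:maps} (the isomorphisms are, by construction, induced by conjugation in $G_{r+1}$), so no extra work is needed. Everything else is bookkeeping: identifying $r_0$ with $e$, and observing that ${}^gU, {}^gW$ are determined by $g$ alone while the residual freedom ($t \in M_r$) is precisely what Proposition~\ref{prop:maps} controls.
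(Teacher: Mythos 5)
Your proposal is correct and takes essentially the same approach as the paper, which simply observes that $G_{r+1}$-conjugacy is equivalent to non-emptiness of the isomorphism set in Proposition~\ref{prop:maps}, and that set is $r$-independent. The intermediate reduction to a fixed $g \in P$ and subsequent union over $g$ is unnecessary at this level, since the set of isomorphisms in the \emph{statement} of Proposition~\ref{prop:maps} already ranges over all conjugating elements of $G_{r+1}$ (the fixing of $g$ happens only inside that proposition's proof), but it does no harm.
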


\begin{proof}
They are $G_{r+1}$-conjugate if and only if the set of isomorphisms in Proposition~\ref{prop:maps} is non-empty. But this set does not depend on~$r$.
\end{proof}

%%%%%%%%%%%%%%%%%%%%%%%%%%%%%%%%%%%%%%%%%%%%%%%%%%%%%%%%%%%%%%%%%%%%%%%%%%%%%
\section{Wrapping up the main theorem}

\begin{lemma}
\label{lemma:subgroups}
Suppose $r_0 \geq 2m$.
Let $(U,f_{\bullet},W) \in \mathcal{E}$. For each $V \leq W \times U$ there is $(U',f'_{\bullet},W') \in \mathcal{E}$ such that $\forall \, r \::\:  j^f_r(V) = E_r(U',f'_{\bullet},W')$\@. Moreover the map
\[
\kappa^V \colon W' \times U' \xrightarrow{j^{f'}_r} E_r(U',f'_{\bullet},W') \hookrightarrow E_r(U,f_{\bullet},W) \xrightarrow{\left(j^{f}_r\right)^{-1}} W \times U
\]
has image $V$ and is independent of~$r$.
\end{lemma}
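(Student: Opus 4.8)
The goal is to realise an arbitrary subgroup $V \leq W \times U$ as a member $E_r(U',f'_\bullet,W')$ of the same shape, uniformly in $r$, and to check that the resulting inclusion map, transported to $W' \times U'$, does not depend on $r$. The first step is to decompose $V$ in the two ``directions'' present in $W \times U$. Write $U' = VW/W$ for the projection of $V$ to $U$ (this is an elementary abelian subgroup of $P$, since $U$ is), and $W' = V \cap W$ for its intersection with the central-type factor $W$; note $W' \leq W \leq (T/pT)^U$, and $W' \leq (T/pT)^{U'}$ a fortiori. Then $V$ is an extension of $U'$ by $W'$ inside $W \times U$, and since $W'$ has a complement in $V$ (as $V$ is, like everything in sight, abelian and we are splitting off a direct factor of the ambient $W \times U$), we may write $V = W' \times C$ for a lift $C \leq W \times U$ of $U'$. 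Such a lift is of the form $C = \{(-\phi(u),u) \mid u \in U'\}$ for some $\phi \in C^1(U',W) \subseteq C^1(U',M_\bullet$-style coefficients$)$ whose coboundary is the relevant restriction of $\tau$; concretely, because $C \leq W \times U$ and $\tau_r\vert_U$ restricted further to $U'$ controls the extension, $\phi$ records the "twisting" of $C$.

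The second step is to convert the single cochain $\phi$ into a cochain \emph{sequence} $f'_\bullet$ of level at most $2m$ with $\cbdry(f'_\bullet) = \tau_\bullet\vert_{U'}$ and $f'_r$ matching $\phi$ up to the inclusion $W \hookrightarrow M_r$ at each level; this is exactly the kind of statement supplied by Lemma~\ref{lemma:Usplit} (to get \emph{some} cochain sequence splitting $\tau_\bullet\vert_{U'}$, using that $U' \leq P$ lifts — which it does, since $U'$ sits inside $W \times U$ mapping to a lift of $U$) together with Lemma~\ref{lemma:cohSeqsDense} (to adjust it by a cocycle sequence of level at most $m$ so that at one level, hence by Lemma~\ref{lemma:identityCochainSeq} at every level, it agrees with the prescribed $\phi$). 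Tracking levels: $f'_\bullet$ has level $\leq 2m$, the correction has level $\leq m$, so the sum still has level $\leq 2m$, and $(U',f'_\bullet,W') \in \mathcal{E}$. By construction $j^f_r(V) = j^f_r(W' \times C) = \{(p^r w - f_r(u'), u') \mid u' \in U', w \in W'\}$ after identifying, via the direct-product structure, the $W'$-part with $\mul^r(W')$ inside $M_r$ and the $C$-part with $U(f'_r)$-type elements; this is precisely $E_r(U',f'_\bullet,W')$. One has to be a little careful that the element $f_r$ (the outer cochain sequence) and $f'_r$ (the inner one) interact correctly — the point is that $j^f_r$ is an \emph{isomorphism} $W \times U \to E_r(U,f_\bullet,W)$ sending the abelian subgroup $W' \times C$ isomorphically onto the subgroup of $E_r$ which, read off in $G_{r+1}$-coordinates, is visibly of the required form $E_r(U',f'_\bullet,W')$.

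The third step is the independence of $\kappa^V$ from $r$. Here I would simply feed the two triples $(U,f_\bullet,W)$ and $(U',f'_\bullet,W')$ into Proposition~\ref{prop:maps} (whose hypotheses $r_0 \geq 3m$, $m \geq 1$ we assume): $\kappa^V$ is the composite $j^{f'}_r$, then the inclusion $E_r(U',f'_\bullet,W') \hookrightarrow E_r(U,f_\bullet,W)$, then $(j^f_r)^{-1}$, and the inclusion is a very special case of ``conjugation in $G_{r+1}$'' — conjugation by the identity element. So $\kappa^V$ lies in the set of isomorphisms $W' \times U' \to W \times U$ described in Proposition~\ref{prop:maps}, which that proposition asserts is independent of $r$; since for each $r$ this particular element of the set is the honest inclusion-induced map with image $V$, and the set as a whole is $r$-independent, $\kappa^V$ is the same map for all $r$. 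Strictly, Proposition~\ref{prop:maps} is stated for conjugation between two fixed $E_r$'s with a common conjugating element varying in $M_r \rtimes P$; taking the $P$-part to be trivial and the $M_r$-part to be $0$ recovers exactly the inclusion, so no extra argument is needed — one only has to note that $\kappa^V$ does have image $V$, which is immediate from Step~2.

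\textbf{Main obstacle.} The genuinely fiddly point is Step~2, in particular the bookkeeping that shows the level of $f'_\bullet$ stays within $2m$ after the correction from Lemma~\ref{lemma:cohSeqsDense}, and that the prescribed first-level value $\phi$ can indeed be matched by a cochain sequence (rather than merely matched in cohomology) — this is where Lemma~\ref{lemma:cohSeqsDense}, which produces a cocycle sequence hitting a prescribed cocycle at one level, is essential, and one must check its hypothesis $r_1 \geq r_0 \geq 2m$ is met. Everything else is formal once the right earlier lemmas are identified.
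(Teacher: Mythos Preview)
Your initial decomposition --- $W' = V \cap W$, $U'$ the image of $V$ in $U$, and $V = \{(w'-\phi(u),u) \mid w' \in W',\, u \in U'\}$ for some $\phi \in Z^1(U',W)$ --- is correct and matches the paper. But both subsequent steps contain genuine gaps.

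\textbf{Constructing $f'_\bullet$.} Your plan is to obtain \emph{some} cochain sequence $\tilde f_\bullet$ with $\cbdry(\tilde f_\bullet)=\tau_\bullet|_{U'}$ via Lemma~\ref{lemma:Usplit}, then correct it using Lemma~\ref{lemma:cohSeqsDense} so that it hits the required value at one level $r_1$, and finally invoke Lemma~\ref{lemma:identityCochainSeq} to deduce agreement at every level. That last inference is invalid: Lemma~\ref{lemma:identityCochainSeq} says a nonzero cochain sequence is nonzero for all \emph{sufficiently large} $r$, not that a cochain sequence vanishing at a single $r_1$ vanishes identically. So the difference $\delta_\bullet := f'_\bullet - f_\bullet|_{U'} - \mul^\bullet(\phi)$ may be zero at $r_1$ yet nonzero for larger $r$, and then $j^f_r(V) \neq E_r(U',f'_\bullet,W')$ there. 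The paper sidesteps this entirely: it sets $f'_\bullet = f_\bullet|_{U'} + i_1(\phi)_\bullet$ directly, where $i_1(\phi)_r = \mul^r(\phi)$ is, by Lemma~\ref{lemma:intermediate}\,\enref{intermediate-1}, a cochain sequence of level~$1$. A one-line computation then gives $j^f_r(V) = E_r(U',f'_\bullet,W')$ at every $r$, with no matching-and-extending needed.

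\textbf{Independence of $\kappa^V$.} You propose to feed the two triples into Proposition~\ref{prop:maps}, arguing that the inclusion $E_r(U',f'_\bullet,W') \hookrightarrow E_r(U,f_\bullet,W)$ is ``conjugation by the identity''. But conjugation by the identity is the identity automorphism of $E_r(U',f'_\bullet,W')$; it does not map into the (generally strictly larger) group $E_r(U,f_\bullet,W)$. Proposition~\ref{prop:maps} is about the set of \emph{isomorphisms} $W\times U \to W'\times U'$ arising from conjugation; when the two groups have different orders this set is empty for every $r$, which is $r$-independent but tells you nothing about the inclusion. Indeed, the present lemma is exactly the ingredient that Corollary~\ref{coroll:injMaps} uses to upgrade Proposition~\ref{prop:maps} from conjugation isomorphisms to arbitrary Quillen-category morphisms, so invoking that proposition here is logically backwards. (Your appeal would also need $r_0 \geq 3m$, stronger than the stated $r_0 \geq 2m$.) The paper's argument is instead a direct computation: with the explicit $f'_\bullet = f_\bullet|_{U'} + i_1(\phi)_\bullet$ one finds $\kappa^V(w,u) = (w - \phi(u),u)$, which visibly does not involve~$r$.
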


\begin{proof}
Take $W' = V \cap W$ and $U' = \{u \in U \mid uW \in VW/W\} \leq U$. Then $V/W' \cong U'$ and $W'$ is a direct factor of~$V$, so there is $c \in Z^1(U',W)$ with $V = \{(w-c(u),u) \mid w \in W', u \in U' \}$. Then
\[
j^f_r(V) = \{ (p^{i+e-1}w - p^{i+e-1}c(u)-f_r(u),u) \mid w \in W', u \in U' \} \, .
\]
Done with $f'_{\bullet} = f_{\bullet}\vert_{U'} + i_1(c)_{\bullet}$ in the terminology of Lemma~\ref{lemma:intermediate}\@. In particular, $\kappa^V(w,u) = (w-c(u),u)$.
\end{proof}

\begin{corollary}
\label{coroll:injMaps}
Suppose $r_0 \geq 3m$ and $m \geq 1$.
For $(U,f_{\bullet},W), (U',f'_{\bullet},W') \in \mathcal{E}$ the set of homomorphisms $W \times U \rightarrow W' \times U'$ of the form
\[
\begin{CD}
W \times U @>{j_r^f}>> E_r(U,f_{\bullet},W) @>{\text{morphism in $\Aa_p(G_{r+1})$}}>> E_r(U',f'_{\bullet},W') @>{(j_r^{f'})^{-1}}>> W' \times U'
\end{CD}
\]
is independent of~$r$.
\end{corollary}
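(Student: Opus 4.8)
The plan is to deduce Corollary~\ref{coroll:injMaps} from Proposition~\ref{prop:maps} and Lemma~\ref{lemma:subgroups} by recalling that every morphism in the Quillen category $\Aa_p(G_{r+1})$ factors as an isomorphism onto the image followed by an inclusion of subgroups. So first I would recall that a morphism $E_r(U,f_{\bullet},W) \to E_r(U',f'_{\bullet},W')$ in $\Aa_p(G_{r+1})$ is, by definition, a homomorphism induced by conjugation in $G_{r+1}$; it need not be injective in general, but any such morphism factors through its image $E'' := $ (the image), which is again an elementary abelian subgroup of $G_{r+1}$ contained in $E_r(U',f'_{\bullet},W')$. Thus the morphism is the composite of a surjective morphism $E_r(U,f_{\bullet},W) \twoheadrightarrow E''$ induced by conjugation, followed by the inclusion $E'' \hookrightarrow E_r(U',f'_{\bullet},W')$.

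Second, I would analyze the two pieces. For the inclusion part: $E''$ is a subgroup of $E_r(U',f'_{\bullet},W')$, so writing $E_r(U',f'_{\bullet},W') \cong W' \times U'$ via $j^{f'}_r$, the subgroup $E''$ corresponds to some $V \leq W' \times U'$, and by Lemma~\ref{lemma:subgroups} there is a triple $(U'',f''_{\bullet},W'') \in \mathcal{E}$ with $j^{f'}_r(V) = E_r(U'',f''_{\bullet},W'')$ for all $r$, and the induced inclusion map $\kappa^V \colon W'' \times U'' \to W' \times U'$ is independent of $r$. Crucially the triple $(U'',f''_{\bullet},W'')$ and the set $V$ do not depend on $r$: this follows because $E''$ is the image of conjugation by a fixed group element $(t,g)$, but to make that claim independent of $r$ I would instead argue that the possible images are controlled combinatorially — the pair $(U'', W'')$ is determined by $(U, W, g)$ and the relevant class of $c$'s, exactly as in the proof of Proposition~\ref{prop:maps}. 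For the surjective part: a surjection $E_r(U,f_{\bullet},W) \twoheadrightarrow E''$ induced by conjugation is in particular an isomorphism from $E_r(U,f_{\bullet},W)$ onto $E_r(U'',f''_{\bullet},W'')$ induced by conjugation in $G_{r+1}$ (since both sides are elementary abelian of the same order once we fix the image), so Proposition~\ref{prop:maps} applies directly: the set of such isomorphisms $W \times U \to W'' \times U''$ is independent of $r$.

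Third, I would assemble: the set of homomorphisms $W \times U \to W' \times U'$ arising from $\Aa_p(G_{r+1})$-morphisms is the union, over all triples $(U'',f''_{\bullet},W'') \in \mathcal{E}$ that arise as subgroup-triples of $(U',f'_{\bullet},W')$ via Lemma~\ref{lemma:subgroups}, of the sets $\{ \kappa^V \circ \phi : \phi \text{ an isomorphism } W\times U \to W'' \times U'' \text{ from Proposition~\ref{prop:maps}} \}$. The index set (the finitely many relevant $V$, equivalently the finitely many relevant triples $(U'',f''_{\bullet},W'')$) is independent of $r$ because it is a purely combinatorial datum attached to $W' \times U'$; each $\kappa^V$ is independent of $r$ by Lemma~\ref{lemma:subgroups}; and each inner set of isomorphisms is independent of $r$ by Proposition~\ref{prop:maps}. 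Hence the whole set is independent of $r$.

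The main obstacle I expect is bookkeeping the factorization cleanly: one must check that a conjugation-induced morphism in $\Aa_p(G_{r+1})$ really does factor, \emph{within the framework of the $E_r(-,-,-)$ parametrization}, as (a map of the form covered by Lemma~\ref{lemma:subgroups}) $\circ$ (a map of the form covered by Proposition~\ref{prop:maps}), with the intermediate triple $(U'',f''_{\bullet},W'')$ chosen uniformly in $r$. Concretely, given the conjugating element $(t,g) \in G_{r+1}$, one decomposes its action: $g \in P$ determines $U'' = $ (the image of $U$ under the relevant restriction) and $W''$, and then the image subgroup $E''$ is cut out inside $E_r({}^gU, \cdot, {}^gW)$ as a $V$ that depends only on $g$ and the class data, not on $r$ — this is exactly the content already extracted in the proof of Proposition~\ref{prop:maps}, where Eqn.~\eqref{eqn:pi} and the application of Lemma~\ref{lemma:intermediate} show the relevant cocycle data stabilizes. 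Once that uniform factorization is in hand, the conclusion is formal.
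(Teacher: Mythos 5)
Your approach is the same as the paper's: factor each Quillen-category morphism as an isomorphism onto its image $V \leq W' \times U'$ followed by the inclusion, apply Lemma~\ref{lemma:subgroups} to the inclusion and Proposition~\ref{prop:maps} to the isomorphism, and take the union over $V$. One misstatement to flag: a morphism in $\Aa_p(G_{r+1})$ \emph{is} always injective (it is conjugation by a fixed element of $G_{r+1}$, hence the restriction of an automorphism, followed by a subgroup inclusion), so ``it need not be injective in general'' is false; this is precisely what makes the factorization work, and the paper's proof opens with ``every such map is an isomorphism to some $V$.'' Also, the ``main obstacle'' you anticipate at the end is not actually an issue: one does not need to choose the intermediate triple $(U'',f''_{\bullet},W'')$ or trace a fixed conjugating element $(t,g)$ uniformly across different $r$ --- the claim is only that the \emph{set} of homomorphisms is independent of $r$, and this follows because the indexing set of subgroups $V \leq W' \times U'$ is fixed (it lives in a group that does not depend on $r$), $\kappa^V$ is $r$-independent by Lemma~\ref{lemma:subgroups}, and the set $\mathcal{I}_V$ of isomorphisms with the given image is $r$-independent by Proposition~\ref{prop:maps}.
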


\begin{proof}
Every such map is an isomorphism to some $V \leq W' \times U'$. Lemma~\ref{lemma:subgroups}: For some $(U'',f''_{\bullet},W'')$ have $j^{f'}_r(V) = E_r(U'',f''_{\bullet},W'')$ for all~$r$.
Proposition~\ref{prop:maps}: The set $\mathcal{I}_V$ of isomorphisms of the form
\[
\begin{CD}
W \times U @>{j_r^f}>> E_r(U,f_{\bullet},W) @>{\text{conjugation in $G_{r+1}$}}>> E_r(U'',f''_{\bullet},W'') @>{(j_r^{f''})^{-1}}>> W'' \times U''
\end{CD}
\]
is independent of~$r$. But $\phi \mapsto \kappa^V \circ \phi$ is a bijection from $\mathcal{I}_V$ to the set of homomorphisms of the form
\[
\begin{CD}
W \times U @>{j_r^f}>> E_r(U,f_{\bullet},W) @>{\text{morphism in $\Aa_p(G_{r+1})$}}>> E_r(U',f'_{\bullet},W') @>{(j_r^{f'})^{-1}}>> W' \times U'
\end{CD}
\]
whose image is~$V$.
\end{proof}

\begin{proposition}
\label{prop:main}
Suppose $e \geq 3m$ and $m \geq 1$. Choose a subset $\mathcal{E}_0 \subseteq \mathcal{E}$ such that for every conjugacy class $C$ of elementary abelian subgroups in $G_{r_0+1}$ there is exactly one $(U,f_{\bullet},W) \in \mathcal{E}_0$ such that $E_{r_0}(U,f_{\bullet},W)$ lies in~$C$. Define $\mathcal{C}_r$ to be the full subcategory of the Quillen category $\Aa_p(G_{r+1})$ on the $E_r(U,f_{\bullet},W)$ with $(U,f_{\bullet},W)$ in $\mathcal{E}_0$. Then:
\begin{enumerate}
\item
\label{enum:propMain-1}
$\mathcal{C}_r$ is a skeleton of~$\Aa_p(G_{r+1})$ for every~$r \geq r_0$.
\item
\label{enum:propMain-2}
The categories $\mathcal{C}_r$ are all isomorphic to each other.
\end{enumerate}
Hence the Quillen categories $\Aa_p(G_{r+1})$ are all equivalent to each other.
\end{proposition}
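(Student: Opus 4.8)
The plan is to assemble the pieces already developed in Sections~6 and~7. The set $\mathcal{E}$ parametrises all elementary abelian subgroups of $G_{r+1}$ uniformly in $r$ (Lemma~\ref{lemma:enoughElabs}), the $G_{r+1}$-conjugacy relation among the $E_r(U,f_\bullet,W)$ is independent of $r$ (Corollary~\ref{coroll:conjugacySequence}), and the Hom-sets between any two such subgroups, transported to the fixed target $W\times U$ resp.\ $W'\times U'$ via the $j_r^f$, are independent of $r$ (Corollary~\ref{coroll:injMaps}). So the strategy is: first verify \enref{propMain-1}, that $\mathcal{C}_r$ is a skeleton of $\Aa_p(G_{r+1})$ for each $r\ge r_0$; then verify \enref{propMain-2}, that the $\mathcal{C}_r$ are pairwise isomorphic as categories; and finally combine these with the standard fact that a category is equivalent to any of its skeleta.

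For \enref{propMain-1}: by Lemma~\ref{lemma:enoughElabs} every object of $\Aa_p(G_{r+1})$ is some $E_r(U,f_\bullet,W)$ with $(U,f_\bullet,W)\in\mathcal{E}$, and by Corollary~\ref{coroll:conjugacySequence} two such objects are conjugate in $G_{r+1}$ exactly when the corresponding ones are conjugate in $G_{r_0+1}$; hence the objects $E_r(U,f_\bullet,W)$ with $(U,f_\bullet,W)\in\mathcal{E}_0$ form a set of representatives for the isomorphism classes (conjugacy classes) of objects of $\Aa_p(G_{r+1})$, with no repetitions. As $\mathcal{C}_r$ is by definition the full subcategory on these objects, it is a skeleton.

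For \enref{propMain-2}: define a functor $\Phi_{r,s}\colon\mathcal{C}_r\to\mathcal{C}_s$ by sending the object $E_r(U,f_\bullet,W)$ to $E_s(U,f_\bullet,W)$ (same index $(U,f_\bullet,W)\in\mathcal{E}_0$), and sending a morphism $\phi\colon E_r(U,f_\bullet,W)\to E_r(U',f'_\bullet,W')$ to the morphism $E_s(U,f_\bullet,W)\to E_s(U',f'_\bullet,W')$ obtained by conjugating with $(j_r^{f'})^{-1}$, then $(j_s^f)$ — i.e.\ using Corollary~\ref{coroll:injMaps} to identify the two Hom-sets through the common model $\Hom(W\times U, W'\times U')$. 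Corollary~\ref{coroll:injMaps} says precisely that under the identifications $j_r^f$ these Hom-sets coincide as subsets of the set of homomorphisms $W\times U\to W'\times U'$, so $\Phi_{r,s}$ is a bijection on each Hom-set; it is the identity on the model, so $\Phi_{s,r}\circ\Phi_{r,s}=\id$. Functoriality (compatibility with composition and identities) is the one point needing a short check: composition of morphisms in $\Aa_p(G_{r+1})$ must correspond, under the $j_r^f$-identifications, to composition in the model independently of $r$. This follows because, spelling out the definition in the Notation preceding Proposition~\ref{prop:maps}, the composite of $W\times U\xrightarrow{j_r^f}E_r\xrightarrow{\text{hom}}E_r'\xrightarrow{(j_r^{f'})^{-1}}W'\times U'$ with the analogous map $W'\times U'\to W''\times U''$ is visibly the transported composite, and the transported map is $r$-independent by Corollary~\ref{coroll:injMaps}; identity morphisms are handled the same way. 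Hence $\Phi_{r,s}$ is an isomorphism of categories, proving \enref{propMain-2}.

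The concluding sentence then follows: each $\Aa_p(G_{r+1})$ is equivalent to its skeleton $\mathcal{C}_r$, and the $\mathcal{C}_r$ are all isomorphic, hence all equivalent; composing these equivalences gives $\Aa_p(G_{r_0+1})\simeq\Aa_p(G_{r+1})$ for all $r\ge r_0$, and therefore all the $\Aa_p(G_{r+1})$ are equivalent to one another. The main obstacle is not deep but organisational: making sure that the $r$-independence statements of Corollaries~\ref{coroll:conjugacySequence} and~\ref{coroll:injMaps} are strong enough to give a genuine functor (not just a bijection of Hom-sets object-by-object) — i.e.\ that the identifications are compatible with composition — and keeping track that the hypothesis $e\ge 3m$, $m\ge 1$ is exactly what those corollaries require.
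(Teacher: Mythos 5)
Your proposal is correct and follows essentially the same route as the paper: part \enref{propMain-1} combines Lemma~\ref{lemma:enoughElabs} and Corollary~\ref{coroll:conjugacySequence} exactly as the paper does, and part \enref{propMain-2} uses the same transport-along-$j^f_r$ isomorphism functor (the paper's $F$, your $\Phi_{r,s}$), with Corollary~\ref{coroll:injMaps} supplying Hom-set bijectivity and the functoriality check reducing, as you note, to the fact that the $j$-conjugations compose coherently. No gap.
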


\begin{proof}
$\mathcal{E}_0$ exists by Lemma~\ref{lemma:enoughElabs}\@.
\quad \enref{propMain-1}: Need to show that each conjugacy class $C$ in $G_{r+1}$ contains $E_r(U,f_{\bullet},W)$ for precisely one $(U,f_{\bullet},W) \in \mathcal{E}_0$. Corollary~\ref{coroll:conjugacySequence}: at most one such triple. Lemma~\ref{lemma:enoughElabs}: there is some $(U',f'_{\bullet},W') \in \mathcal{E}$ such that $E_r(U',f'_{\bullet},W')$ lies in~$C$. By construction of $\mathcal{E}_0$ there is $(U,f_{\bullet},W) \in \mathcal{E}_0$ such that $E_0(U,f_{\bullet},W)$, $E_0(U',f'_{\bullet},W')$ are $G_0$-conjugate. So $E_r(U,f_{\bullet},W)$ lies in~$C$ by Corollary~\ref{coroll:conjugacySequence}\@.

\enref{propMain-2}: For $r,r' \geq r_0$ and $(U,f_{\bullet},W) \in \mathcal{E}_0$ have isomorphism
\[
\lambda^f_{rr'} \colon E_r(U,f_{\bullet},W) \xrightarrow{\left(j^f_r\right)^{-1}} W \times U \xrightarrow{j^f_{r'}} E_{r'}(U,f_{\bullet},W) \, ,
\]
mit $\lambda^f_{r'r} = \left(\lambda^f_{rr'}\right)^{-1}$. For a morphism $E_r(U,f_{\bullet},W) \xrightarrow{\phi} E_r(U',f'_{\bullet},W')$ in $\mathcal{C}_r$, define $F(\phi)$ in $\mathcal{C}_{r'}$ thus:
\[
F(\phi) \colon E_{r'}(U,f_{\bullet},W) \xrightarrow{\lambda^f_{r'r}} E_r(U,f_{\bullet},W) \xrightarrow{\phi} E_r(U',f'_{\bullet},W') \xrightarrow{\lambda^{f'}_{rr'}} E_{r'}(U',f'_{\bullet},W') \, . 
\]
This is a bijection
\[
\mathcal{C}_r(E_r(U,f_{\bullet},W), E_r(U',f'_{\bullet},W')) \rightarrow \mathcal{C}_{r'}(E_{r'}(U,f_{\bullet},W), E_{r'}(U',f'_{\bullet},W'))
\]
by Corollary~\ref{coroll:injMaps}, and it is functorial since $F(\Id_{W \times U_r(f)}) = \lambda^f_{rr'} \Id \lambda^f_{r'r} = \Id_{W \times U_{r'}(f)}$, and for $E_r(U',f'_{\bullet},W') \xrightarrow{\psi} E_r(U'',f''_{\bullet},W'')$ in $\mathcal{C}_r$ have
\[
F(\psi) F(\phi) = \lambda^{f''}_{rr'} \psi \lambda^{f'}_{r'r} \circ \lambda^{f'}_{ir'} \phi \lambda^f_{r'r} = \lambda^{f''}_{rr'} \psi \phi \lambda^f_{r'r} = F(\psi \phi) \, .
\]
This establishes~\enref{propMain-2}\@. The last part follows from \enref{propMain-1}~and \enref{propMain-2}\@.
\end{proof}

\section{Examples}
\label{section:Examples}

\subsection{Main line maximal class groups}
\label{subsection:Gp}

\noindent
If $p$ is an odd prime then $\mathcal{G}(p,1)$ consists of one infinite tree, together with the isolated point $C_{p^2}$: so there is only one uniserial $p$-adic space group of coclass one. We recall the construction of the main line groups from Example 3.1.5(ii) of~\cite{LeedhamGreenMcKay:book}\@.

The $p$th local cyclotomic number field is $K = \qq_p(\theta)$, where $\theta$ has minimal polynomial $\Phi_p(X) = \frac{X^p-1}{X-1}$. The ring of integers in~$K$ is $\mathcal{O} = \zz_p[\theta]$, a free $\zz_p$-module of rank $p-1$ with basis $1,\theta,\ldots,\theta^{p-2}$. The coclass one uniserial $p$-adic space group is then $G := \mathcal{O} \rtimes C_p$, where the generator $\tau$~of $C_p$ acts as multiplication by~$\theta$; that is, ${}^{\tau} v = \theta v$ for $v \in \mathcal{O}$.

The valuation ring $\mathcal{O}$ has unique maximal ideal $\alpha \mathcal{O}$, where $\alpha = \theta - 1$. So $\gamma_i(G) = \alpha^{i-1} \mathcal{O}$ for $i \geq 2$; and by considering $\Phi_p(X+1)$ one observes that $p\mathcal{O} = \alpha^{p-1} \mathcal{O}$.
Since $1,\alpha,\ldots,\alpha^{p-2}$ is a $\zz_p$-basis of~$\mathcal{O}$, this means that $\mathcal{O}/\alpha\mathcal{O} \cong \f$, and hence $\gamma_i(G)/\gamma_{i+1}(G) \cong C_p$ for $i \geq 2$.

The main line groups are the quotients $G_i = G/\gamma_i(G)$. These main line groups fall into $p-1$ coclass families, where for $0 \leq r \leq p-2$ the $i$th group in the $r$th family is $G_{r+(p-1)i}$. From~\cite{quillen} (see also Proposition~\ref{prop:main}) we know that all groups in one coclass family have equivalent Quillen categories. But here a stronger result holds: all $p-1$ coclass families have the same equivalence class of Quillen categories.

\begin{lemma}
For this group $G = \mathcal{O} \rtimes C_p$, the Quillen category of $G/\gamma_i(G)$ is independent (up to equivalence of categories) of~$i$ for $i \geq p+1$.
\end{lemma}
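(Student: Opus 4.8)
The plan is to analyze the Quillen category $\Aa_p(G/\gamma_i(G))$ directly by understanding the elementary abelian subgroups of $G_i = G/\gamma_i(G)$ and the maps between them, and then to show that this combinatorial data stabilizes for $i \geq p+1$. First I would observe that since $G = \mathcal{O} \rtimes C_p$ has $p$-rank two for $i$ large (the translation part $\mathcal{O}/\gamma_i$ is homocyclic of rank $p-1$, but its socle $\Omega_1$ is one-dimensional, while $C_p$ contributes one more dimension), every elementary abelian subgroup $E \leq G_i$ has order at most $p^2$. The subgroups of order $p$ fall into: the unique central one $\Omega_1(\gamma_i/\gamma_{i+1}\text{-part}) = \gamma_{i-1}(G)/\gamma_i(G)$ sitting inside the translation subgroup, and the conjugacy classes of non-central ones generated by elements projecting nontrivially to $C_p$. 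The subgroups of order $p^2$ are the elementary abelian $E$ with $E \cap M = $ the central $C_p$ and $E M/M = C_p$, i.e. lifts of $C_p$ that centralize the socle.

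Next I would set this up in the language of the paper. Here $S = G$, $T = \mathcal{O}$ (or a power $\gamma_l(G)$), $P = G/\gamma_i(G)$ for a fixed small $i$, and the main-line groups for varying $i$ form (up to reindexing by the $p-1$ residue classes mod $p-1$) exactly families $G_{\bullet+1} = M_\bullet . P$ of the kind considered in Sections 2--7, with $M_\bullet = T/p^{\bullet+1}T$ and extension cochain sequence $\tau_\bullet$. The point of Proposition~\ref{prop:main} is that within one such family the Quillen categories are all equivalent; so it suffices to compare the $p-1$ different families. The key extra input for the maximal class case is that the set $\mathcal{E}$ of triples $(U,f_\bullet,W)$, together with the morphism sets computed $r$-independently in Proposition~\ref{prop:maps} and Corollary~\ref{coroll:injMaps}, actually does not depend on which residue class we are in — because the $C_p$-module $\mathcal{O}$ and its filtration by the $\alpha^j\mathcal{O}$ is ``self-similar'': multiplication by $\alpha$ (equivalently by $\theta$) carries $\gamma_j(G)/\gamma_k(G)$ isomorphically onto $\gamma_{j+1}(G)/\gamma_{k+1}(G)$ as $C_p$-modules, commuting with the $C_p$-action up to the automorphism $\tau \mapsto \tau$. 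Concretely, since $p\mathcal{O} = \alpha^{p-1}\mathcal{O}$, the shift by $p-1$ steps is just multiplication by $p$, which is already built into the cochain-sequence formalism; the shift by a single step is a genuinely new symmetry, and that is what upgrades ``equivalent within a family'' to ``equivalent across all $p-1$ families''.

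So the concrete steps are: (i) fix $i_0 = p+1$ and realize all $G/\gamma_i(G)$ with $i \geq p+1$ as the groups $G_{r+1}$ appearing in the $p-1$ families; (ii) invoke Proposition~\ref{prop:main} to reduce to comparing one representative $G/\gamma_{i_0}(G), G/\gamma_{i_0+1}(G), \ldots, G/\gamma_{i_0+p-2}(G)$; (iii) use the automorphism of $\mathcal{O}$ given by multiplication by $\theta$ (it normalizes the $C_p$-action and permutes the lattice filtration by a one-step shift) to construct, for each $i$, an explicit isomorphism $\Aa_p(G/\gamma_i(G)) \to \Aa_p(G/\gamma_{i+1}(G))$ at the level of the skeleton $\mathcal{C}_r$ of Proposition~\ref{prop:main}: it sends a triple $(U,f_\bullet,W)$ to its $\theta$-translate, and on morphism sets it is a relabeling that is well-defined because the $r$-independent morphism sets of Proposition~\ref{prop:maps} are computed purely from the $C_p$-module structure of $T$ and the cochain $\tau_\bullet$, both of which are $\theta$-equivariant; (iv) conclude that all these categories are isomorphic, hence all the $\Aa_p(G/\gamma_i(G))$ for $i \geq p+1$ are equivalent.

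The main obstacle is step (iii): checking that multiplication by $\theta$ really induces a functor on the relevant Quillen categories, i.e.\ that it carries elementary abelians to elementary abelians compatibly and, crucially, that it respects the $r$-independent bookkeeping of $\mathcal{E}$, $\mathcal{E}_0$, and the morphism sets — in particular that the extension cochain sequence $\tau_\bullet$ for the ``$\theta$-shifted'' family is carried to that of the original family under this automorphism (perhaps after adjusting by a coboundary, which by Lemma~\ref{lemma:preUsplit} and Lemma~\ref{lemma:coboundarySequence} does not affect anything). One must also be slightly careful about the indexing bookkeeping, since shifting by one step changes which residue class mod $p-1$ one is in and changes $m = \log_p|S_\ell|$ by $1$; but all the bounds in Sections 5--7 are of the form ``$r_0 \geq 3m$'', so enlarging $r_0$ (equivalently starting at $i = p+1$ rather than smaller) absorbs this. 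Everything else is routine once the $\theta$-equivariance is in hand.
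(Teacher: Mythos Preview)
Your approach differs substantially from the paper's, which does not use the cochain-sequence machinery at all here: it simply computes the elementary abelian subgroups of $G/\gamma_i(G)$ explicitly (using $(v\tau)^p = 1$ and $p\mathcal{O} = \alpha^{p-1}\mathcal{O}$), lists their conjugacy classes via a fixed transversal $v_1,\ldots,v_p$ of $\mathcal{O}/\alpha\mathcal{O}$ together with the subgroups of $\gamma_{i-p+1}(G)/\gamma_i(G) \cong \mathcal{O}/\alpha^{p-1}\mathcal{O}$, and observes that both the objects and the morphisms admit descriptions not involving~$i$.

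Your proposal has two concrete problems. First, the preliminary structural claim is wrong: for $i \geq p$ the socle $\Omega_1(\mathcal{O}/\alpha^{i-1}\mathcal{O})$ is $\alpha^{i-p}\mathcal{O}/\alpha^{i-1}\mathcal{O} \cong (C_p)^{p-1}$, not one-dimensional. So the $p$-rank of $G/\gamma_i(G)$ is $p-1$, not~$2$, and the Quillen category contains the rank-$(p-1)$ object $\gamma_{i-p+1}(G)/\gamma_i(G)$ with its internal $C_p$-action. (For $p=3$ you happen to get the right rank for the wrong reason.) Second, Proposition~\ref{prop:main} requires $r_0 \geq 3m$; even with the smallest admissible $P$ this yields a starting index well beyond $p+1$, so the machinery cannot deliver the stated bound. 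Your remark that ``enlarging $r_0$ \ldots\ absorbs this'' is backwards: the general theorem gives a \emph{worse} bound, not a better one.

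There is also a slip in step~(iii): multiplication by $\theta$ is exactly the $C_p$-action and fixes every $\alpha^j\mathcal{O}$ setwise; what shifts the filtration is multiplication by $\alpha = \theta-1$, which is not a unit. More seriously, the groups $G/\gamma_i(G)$ and $G/\gamma_{i+1}(G)$ have different orders, so the $\alpha$-shift only identifies $G/\gamma_i(G)$ with an index-$p$ subgroup of $G/\gamma_{i+1}(G)$, and the different coclass families genuinely sit over different base groups~$P$. Turning this self-similarity into an equivalence of Quillen categories is precisely the content of the lemma, and carrying it out amounts to the same explicit bookkeeping (transversals of $\mathcal{O}/\alpha\mathcal{O}$, the $C_p$-action on $\mathcal{O}/\alpha^{p-1}\mathcal{O}$, centralizers and normalizers of the $\langle v\tau\rangle$) that the paper does directly. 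So step~(iii), which you correctly flag as the main obstacle, is not a technicality to be checked but the whole argument.
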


\begin{rk}
For $p=3$, the first author and S.~King~\cite{EickKing:IsoProblem} have shown that $G/\gamma_5(G)$, $G/\gamma_6(G)$ and $G/\gamma_7(G)$ have isomorphic cohomology rings; and that these differ from the cohomology ring of~$G/\gamma_4(G) \cong 3^{1+2}_+$.
\end{rk}

\begin{proof}
If $v \in \mathcal{O}$ then $(v\tau)^p = (\Phi_{p-1}(\theta) \cdot v)\tau^p = 1$, and so $v\tau$ has order~$p$. So since $p\mathcal{O} = \alpha^{p-1} \mathcal{O}$, there are two kinds of order~$p$ elements of $G/\gamma_i(G)$:
\begin{itemize}
\item Elements of the form $v\tau^r \gamma_i(G)$, with $v \in \mathcal{O}$ and $1 \leq r \leq p-1$; and
\item Elements of $\gamma_{i-p+1}(G)/\gamma_i(G) \cong (C_p)^{p-1}$.
\end{itemize}
Moreover the conjugacy class of $v\tau$ in $G$ is $\{ w\tau \mid w \in v + \alpha \mathcal{O} \}$; and the centralizer of $v\tau \gamma_i(G)$ in $G/\gamma_i(G)$ is elementary abelian of rank~$2$, generated by $v\tau \gamma_i(G)$ and $\gamma_{i-1}(G)/\gamma_i(G)$. So as $\tau$ acts on $\gamma_{i-p+1}(G)/\gamma_i(G) = \alpha^{i-p} \mathcal{O} / \alpha^{i-1} \mathcal{O}$ as multiplication by $1+ \alpha$, the objects of the Quillen category form the following equivalence classes:
\begin{itemize}
\item The class of $\langle v_j \tau\gamma_i(G), \gamma_{i-1}(G)/\gamma_i(G) \rangle \cong C_p^2$ for some fixed transversal $v_1,\ldots,v_p$ of $\mathcal{O}/\alpha \mathcal{O}$;
\item The class of $\langle v_j \tau\gamma_i(G) \rangle \cong C_p$ for the same transversal $v_1,\ldots,v_p$; and
\item The conjugacy classes of subgroups of $\gamma_{i-p+1}(G)/\gamma_i(G) \cong \mathcal{O}/\alpha^{p-1} \mathcal{O} \cong C_p^{p-1}$ under the action of $C_p$ given by multiplication by $1 + \alpha$.
\end{itemize}
So the equivalence classes of objects admit a description which is independent of~$i$. From this description and the fact that $\langle v_j \tau \gamma_i(G), \gamma_{i-1}(G)/\gamma_i(G) \rangle$ has normalizer $\langle v_j \tau \gamma_i(G), \gamma_{i-2}(G)/\gamma_i(G) \rangle$, it follows that the morphisms between these representatives also admit a description which is independent of~$i$.
\end{proof}

\noindent
Since $p \in \alpha \mathcal{O}$ we may always take the transversal $v_1,\ldots,v_p$ to be $0,1,\ldots,p-1$. For $p=3$ and $i=4$ the Quillen category has skeleton
\[
\begin{tikzpicture}[x=1.75cm,y=1.25cm]
\node (I) at (0,0) {$1$};
\node (E0) at (-3,1) {$\left\langle \alpha^2 \right\rangle$};
\node (E1) at (-1,1) {$\left\langle 0\tau \right\rangle$};
\node (E2) at (1,1) {$\left\langle 1\tau \right\rangle$};
\node (E3) at (3,1) {$\left\langle 2\tau \right\rangle$};
\node (F0) at (-3,2) {$\left\langle \alpha^2, \alpha^3 \right\rangle$};
\node (F1) at (-1,2) {$\left\langle 0\tau, \alpha^3 \right\rangle$};
\node (F2) at (1,2) {$\left\langle 1\tau, \alpha^3 \right\rangle$};
\node (F3) at (3,2) {$\left\langle 2\tau, \alpha^3 \right\rangle$};
\node (G) at (0,3) {$\left\langle \alpha^3 \right\rangle$};
\foreach \a/\b in {I/G,I/E0,I/E1,I/E2,I/E3,G/F0,G/F1,G/F2,G/F3} \draw[postaction={decorate,sArrow}] (\a) -- (\b);
\foreach \a/\b in {E0/F0,E1/F1,E2/F2,E3/F3}
{
  \draw[postaction={decorate,sArrow}] (\a) -- (\b);
  \draw[transform canvas={xshift=-0.15cm},postaction={decorate,sArrow}] (\a) -- (\b);
  \draw[transform canvas={xshift=0.15cm},postaction={decorate,sArrow}] (\a) -- (\b);
}
% \begin{scope}[every loop/.style={min distance=10mm}]
% \draw[postaction={decorate,sArrow}] (F3) to[out=10,in=350,loop] (I);
% \end{scope}
\end{tikzpicture}
\]
where the three automorphisms of each rank two elementary abelian are omitted for clarity. Specifically, the three maps $\langle 2\tau \rangle \rightarrow \langle 2\tau,\alpha^3 \rangle$ are $2\tau \mapsto (2+\lambda \alpha^3)\tau$ for $\lambda = 0,1,2$; and three three autormorphisms of $\langle 2\tau, \alpha^3 \rangle$ fix $\alpha^3$ and act on $2\tau$ as one of these three maps.

\subsection{A more substantial example}
\label{subsection:G9}
Together with Leedham--Green, Newman and O'Brien, the first author studied the $3$-groups of coclass two in~\cite{E-LG-N-OB}\@. In particular they construct the skeleton groups in the four coclass trees (out of sixteen) whose branches have unbounded depth. Here we consider the skeleton groups in one of these unbounded depth trees: the tree associated to the pro-$3$-group which they denote as~$R$ (see their Theorem~4.2(a))\@.

We briefly recall the construction of the skeleton groups $R_{j-3,\gamma,m}$ from \cite[Sect.~5]{E-LG-N-OB}\@. Let $j \geq 7$. Let $K = \qq_3(\theta)$ be the ninth local cyclotomic number field, so $\theta$ is a root of $\Phi_9(X) = X^6 + X^3 + 1$. Let $\mathcal{O}$ be the ring of integers in~$K$; then $\mathcal{O} = \zz_3[\theta]$ is free as a $\zz_3$-module, with basis $1,\theta,\ldots,\theta^5$. Moreover, $\mathcal{O}$ is a local ring, with maximal ideal $\mathfrak{p} = (\theta-1)\mathcal{O}$. Observing that $(\theta-1)^6$ and $3$ are associates, one sees that $3\mathcal{O} = \mathfrak{p}^6$.

We now recall the twisting map $\mathfrak{p} \wedge \mathfrak{p} \rightarrow \mathcal{O}$, which we shall denote by~$\gamma_0$. Note however that in~\cite{E-LG-N-OB} it is called~$\vartheta$. It is the map
\[
\gamma_0(x \wedge y) = \sigma_2(x)\sigma_{-1}(y) - \sigma_{-1}(x) \sigma_2(y) \, ,
\]
where the automorphism $\sigma_r \in \operatorname{Gal}(K/\qq_3)$ is given by $\sigma_r(\theta) = \theta^r$. Lemma~5.1 of~\cite{E-LG-N-OB} shows that
\[
\gamma_0(\mathfrak{p}^i \wedge \mathfrak{p}^j) = \mathfrak{p}^{i+j+\eps} \quad \text{for} \quad \eps = \begin{cases} 3 & i \equiv j \pmod{3} \\ 2 & \text{otherwise} \end{cases} \, .
\]
Pick $j \geq 7$ and set $T = \mathfrak{p}^{j-3}$, $T_{\ell} = \mathfrak{p}^{j-3+\ell}$. Then $\gamma_0(T \wedge T) = T_j$, and $\gamma_0(T_j \wedge T) = T_k$ for
\[
k = \begin{cases} 2j & 3 \mid j \\ 2j-1 & 3 \nmid j \end{cases} \, .
\]
Now pick a unit $c \in \mathcal{O}^{\times}$ and set $\gamma = c \gamma_0$.
For any $m \in \{j,j+1,\ldots,k \}$ one defines $T_{j-3,\gamma,m}$ to be the group with underlying set $T/T_m$ and product
\[
(x + T_m) * (y + T_m) = \left( x + y + \frac12 \gamma(x \wedge y) \right) + T_m \, .
\]
Finally one sets $R_{j-3,\gamma,m} = T_{j-3,\gamma,m} \rtimes C$, where $C = \langle \tau \rangle$ has order~$9$ and acts on~$T$ via ${}^{\tau} v = \theta v$ for $v \in T$. Note that $\abs{R_{j-3,\gamma,m}} = 3^{m+2}$.

\begin{lemma}
\label{lemma:Rj}
Let $v,w \in T$.
\begin{enumerate}
\item \label{enum:Rj-1}
$(v+T_m)^r = rv+T_m$ in $T_{j-3,\gamma,m}$ for all $r \in \zz$.
\item \label{enum:Rj-2}
The order $3$ elements of $R_{j-3,\gamma,m}$ are:
\begin{itemize}
\bulit Elements of the form $(v + T_m)\tau^{3r}$, with $v \in T$ and $r \in \{1,2\}$;
\bulit Elements of the form $v + T_m$ with $v \in T_{m - 6}$.
\end{itemize}
\item \label{enum:Rj-3}
If $v+T_m$ has order~$3$ then $\gamma(v \wedge w) \in T_m$ for all $w \in T$. Hence $\Omega_1(T_{j-3,\gamma,m}) \leq Z(T_{j-3,\gamma,m})$.
% \item \label{enum:Rj-4}
% $\gamma(\theta^{3r} v, \theta^{3s}w) = \theta^{-3(r+s)} \gamma(v,w)$.
\end{enumerate}
\end{lemma}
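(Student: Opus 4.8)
The plan is to verify the three parts in turn by direct computation in the explicitly defined operations; \enref{Rj-1} and \enref{Rj-2} are algebraic identities in the group law, and \enref{Rj-3} then reduces to a valuation estimate in~$\mathcal{O}$.

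For \enref{Rj-1} one inducts on $r \geq 0$: since $\gamma$ is bilinear and alternating, $\gamma(rv \wedge v) = r\,\gamma(v \wedge v) = 0$, so $(rv+T_m)*(v+T_m) = (r+1)v + T_m$; and $-v + T_m$ is inverse to $v + T_m$, so the formula holds for every $r \in \zz$.

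For \enref{Rj-2} I would write a general element as $g = (v + T_m)\tau^{s}$ with $0 \leq s \leq 8$ and compute $g^{3}$ from the semidirect product law, using ${}^{\tau^{s}}(x + T_m) = \theta^{s} x + T_m$ (legitimate since $\theta$ is a unit). Because $\gamma$ is alternating, every twisting term arising in this computation has the form $\gamma(a v \wedge b v) = ab\,\gamma(v \wedge v) = 0$, so one obtains cleanly
\[
g^{3} = \bigl((1 + \theta^{s} + \theta^{2s})v + T_m\bigr)\tau^{3s} \, .
\]
Now I split on $s \bmod 3$. If $3 \nmid s$, then $\tau^{3s} \neq 1$ in~$C$, so $g$ cannot have order~$3$. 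If $s = 0$, then $g^{3} = 3v + T_m$ by \enref{Rj-1}; since $3\mathcal{O} = \mathfrak{p}^{6}$ we have $3\,T_{m-6} = T_m$, so $g^{3} = 1$ exactly when $v \in T_{m-6}$. If $s = 3r$ with $r \in \{1,2\}$, then $\zeta := \theta^{3r}$ is a primitive cube root of unity because $\theta$ has exact order~$9$, so $1 + \zeta + \zeta^{2} = 0$ in~$\mathcal{O}$ and the $T$-component of $g^{3}$ vanishes, while $\tau^{3s} = \tau^{9r} = 1$; hence every $(v+T_m)\tau^{3r}$ has order~$3$. I expect this last case — concretely the identity $1 + \theta^{3r} + \theta^{6r} = 0$, which is exactly where the hypothesis $\abs{C} = 9$ (rather than $3$) enters — to be the only point that needs to be written out with care; everything else is bookkeeping.

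For \enref{Rj-3}, suppose $v + T_m$ has order~$3$, so $v \in T_{m-6} = \mathfrak{p}^{j-3+m-6}$ by \enref{Rj-2}. For $w \in T = \mathfrak{p}^{j-3}$, writing $\gamma = c\gamma_0$ with $c \in \mathcal{O}^{\times}$ and invoking Lemma~5.1 of~\cite{E-LG-N-OB} quoted above (so the exponent shift $\eps$ satisfies $\eps \geq 2$),
\[
\gamma(v \wedge w) \in c \cdot \gamma_0\bigl(\mathfrak{p}^{j-3+m-6} \wedge \mathfrak{p}^{j-3}\bigr) = c \cdot \mathfrak{p}^{2j+m-12+\eps} = \mathfrak{p}^{2j+m-12+\eps} \subseteq \mathfrak{p}^{j-3+m} = T_m \, ,
\]
the last inclusion because $2j+m-12+\eps \geq j-3+m$ reduces to $j + \eps \geq 9$, which holds since $j \geq 7$ and $\eps \geq 2$. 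Finally, $\Omega_1(T_{j-3,\gamma,m})$ is generated by the elements $v + T_m$ with $v \in T_{m-6}$, and each such element is central: $(v+T_m)*(w+T_m)$ and $(w+T_m)*(v+T_m)$ differ by $\tfrac{1}{2}\bigl(\gamma(v \wedge w) - \gamma(w \wedge v)\bigr) = \gamma(v \wedge w) \in T_m$ (using that $2$ is a unit in~$\mathcal{O}$). Hence $\Omega_1(T_{j-3,\gamma,m}) \leq Z(T_{j-3,\gamma,m})$.
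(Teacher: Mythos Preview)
Your argument follows the same line as the paper's, and parts \enref{Rj-1} and \enref{Rj-3} are fine. There is, however, a genuine gap in your treatment of \enref{Rj-2}.

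You write that ``every twisting term arising in this computation has the form $\gamma(a v \wedge b v) = ab\,\gamma(v \wedge v) = 0$''. Here $a,b$ are elements such as $\theta^{s}$, $1+\theta^{s}$, $\theta^{2s}$, and pulling them out in this way presupposes that $\gamma$ is $\mathcal{O}$-bilinear. It is not: the Galois automorphisms $\sigma_{2}$ and $\sigma_{-1}$ are $\qq_{3}$-linear but not $K$-linear, so $\gamma_{0}$ (and hence $\gamma = c\gamma_{0}$) is only $\zz_{3}$-bilinear. Indeed a direct calculation gives
\[
\gamma_{0}(\theta^{a} v \wedge \theta^{b} v) = \bigl(\theta^{2a-b} - \theta^{2b-a}\bigr)\,\sigma_{2}(v)\sigma_{-1}(v) \, ,
\]
which vanishes if and only if $3(a-b) \equiv 0 \pmod 9$, i.e.\ $a \equiv b \pmod 3$. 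So for general $s$ your displayed formula for $g^{3}$ is not justified (and is in fact false when $3 \nmid s$).

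Fortunately this does not damage the logic: for $3 \nmid s$ you only use that the $C$-component of $g^{3}$ is $\tau^{3s} \neq 1$, which needs nothing about the $T$-component; and for $s \in \{0,3,6\}$ the twisting terms genuinely vanish by the computation above (all exponents involved are multiples of~$3$). So the fix is to restrict the computation of the $T$-component to $s \in \{0,3,6\}$ and justify the vanishing of the twist by the explicit formula for $\gamma_{0}$ rather than by a spurious $\mathcal{O}$-bilinearity. The paper's proof simply asserts $[(v+T_m)\tau^3]^3 = (1+\theta^3+\theta^6)v+T_m$ without comment; your write-up should include the one-line verification above.
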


\begin{proof}
\enref{Rj-1}: Follows by induction, since $\gamma(v \wedge rv) = r \gamma(v \wedge v) = 0$. 
% \item \enref{Rj-4}: $\gamma(\theta^{3r}v,\theta^{3s}w) = (\theta^{6r-3s} \sigma_2(v) \sigma_{-1}(w) - \theta^{6s-3r} \sigma_{-1}(v) \sigma_2(w)$. But $\theta^{6r-3s} = \theta^{6s-3r} = \theta^{-3(r+s)}$.
\item \enref{Rj-2}: Firstly, $[(v + T_m)\tau^3]^3 = (1+\theta^3 + \theta^6)v+T_m = 0$. Secondly: $(v+T_m)^3 = 3v+T_m$. This is zero for $v \in 3^{-1}T_m = T_{m-6}$.
\item \enref{Rj-3}: Suppose that $v \in T_{m-6}$ and $w \in T$. Then $\gamma(v \wedge w)$ lies in $T_{j+m-9+\eps}$, with $\eps \in \{2,3\}$. Since $\eps \geq 2$ and $j \geq 7$, this means that $\gamma(v \wedge w)$ lies in $T_m$.
\end{proof}

\begin{lemma}
\label{lemma:cclRj}
\begin{enumerate}
\item \label{enum:cclRj-0}
The orbit of $(v + T_m)\tau^3$ under conjugation by $T_{j-3,\gamma,m}$ is
\[
\left\{ (v' + T_m)\tau^3 \:\middle|\: v \in v + T_3 \right\} \, .
\]
\item \label{enum:cclRj-1}
$(v + T_m)\tau^3$ and $(w + T_m)\tau^3$ are conjugate in $R_{j-3,\gamma,m}$ if and only if $v+T_3$ and $w+T_3$ lie in the same orbit under the action of $C$ on $T/T_3$.
\ignore{
The conjugacy class of $(v + T_m)\tau^3$ in $R_{j-3,\gamma,m}$ consists of those $(w + T_m)\tau^3$ with
\[
w \in (v + T_3) \cup (\theta v + T_3) \cup (\theta^2 v + T_3) \, .
\]
These three residue classes coincide if $v \in T_2$, and are distinct otherwise.
}
\item \label{enum:cclRj-2}
The action of $R_{j-3,\gamma,m}$ on $T_{m-6}/T_m$ factors through $C$, and coincides wth the action of $C$ on $T/T_6$ via the isomorphism $v+T_6 \mapsto (\theta-1)^{m-6}v + T_m$.
\ignore{
If $v \in T_{m-6}$ then the conjugacy class of $v+T_m$ in $R_{j-3,\gamma,m}$ is
\[
\{\theta^r v+T_m \mid 0 \leq r \leq 8 \} \, .
\]
This has length $1$ for $v \in T_{m-1}$; length $3$ if $v \in T_{m-3} \setminus T_{m-1}$; and length nine otherwise.
}
\item \label{enum:cclRj-3}
$C_{T_{j-3,\gamma,m}}((v+T_m)\tau^3) = T_{m-3}/T_m$.
\end{enumerate}
\end{lemma}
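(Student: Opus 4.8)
The plan is to work explicitly in $R_{j-3,\gamma,m} = T_{j-3,\gamma,m}\rtimes C$, using the multiplication law and the estimate $\gamma(\mathfrak{p}^i\wedge\mathfrak{p}^j) = \mathfrak{p}^{i+j+\eps}$ with $\eps\ge 2$ (and $\eps = 3$ when the two arguments are congruent mod~$3$) from Lemma~5.1 of~\cite{E-LG-N-OB}. Two preliminary facts drive everything. First, since $3\mathcal{O} = \mathfrak{p}^6$ one has $\theta^3-1 = (\theta-1)^3 u$ for a unit $u\in\mathcal{O}^\times$, so multiplication by $1-\theta^3$ carries $\mathfrak{p}^\ell$ onto $\mathfrak{p}^{\ell+3}$; in particular $(1-\theta^3)T = T_3$. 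Second, in $T_{j-3,\gamma,m}$ the commutator of $x+T_m$ and $y+T_m$ is the central element $\gamma(x\wedge y)+T_m$, and every iterated expression such as $\gamma(\gamma(x\wedge y)\wedge z)$ vanishes modulo $T_m$ for $x,y,z\in T$ --- a valuation count using $j\ge 7$ and $m\le k$, invoking the sharper value $\eps = 3$ in the congruent cases, confirms this, and it is exactly what makes $T_{j-3,\gamma,m}$ associative --- so throughout we may treat $\gamma$ as a central-valued bilinear form.

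The technical heart is a single conjugation identity. For $t\in T$ and $x = (v+T_m)\tau^3$, expanding the product gives
\[
{}^{(t+T_m,\,1)}x = \bigl(v + (1-\theta^3)t + \delta + T_m\bigr)\tau^3 ,
\]
where $\delta$ is a $\tfrac12$-combination of $\gamma(t\wedge v)$, $\gamma(t\wedge\theta^3 t)$ and $\gamma(v\wedge\theta^3 t)$; if $t\in\mathfrak{p}^a$ then $\delta\in\mathfrak{p}^{a+j-1}\subseteq\mathfrak{p}^{2j-4}\subseteq T_3$. This identity yields part~\enref{cclRj-3} at once: $(t+T_m,1)$ centralises $x$ iff $(1-\theta^3)t+\delta\in T_m$; if $t\in T_{m-3}$ then both summands lie in $T_m$ by the bounds (using $j\ge 4$), while if $t\notin T_{m-3}$, say $t\in\mathfrak{p}^a\setminus\mathfrak{p}^{a+1}$ with $a\le j+m-7$, then $(1-\theta^3)t$ has valuation exactly $a+3 < j+m-3$, which is strictly less than the valuation $\ge a+6$ of $\delta$, so $(1-\theta^3)t+\delta\notin T_m$. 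Hence $C_{T_{j-3,\gamma,m}}(x) = T_{m-3}/T_m$. Part~\enref{cclRj-0} now follows by orbit--stabiliser: the identity shows the $T_{j-3,\gamma,m}$-orbit of $x$ lies inside $\{(v'+T_m)\tau^3\mid v'\in v+T_3\}$, a set of size $[T_3:T_m] = 3^{m-3} = [T:T_{m-3}]$, which is the orbit length, so the orbit is the whole set.

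Part~\enref{cclRj-1} follows by bringing in the $C$-action: conjugating $x$ by $(t+T_m,\tau^a)$ equals conjugating $(\theta^a v+T_m)\tau^3$ by $(t+T_m,1)$, so by part~\enref{cclRj-0} the full $R_{j-3,\gamma,m}$-conjugacy class of $x$ is $\{(v'+T_m)\tau^3\mid v'+T_3\in C\cdot(v+T_3)\}$, which is the assertion, since $C = \langle\tau\rangle$ has order~$9$ and acts on $T/T_3$ by multiplication by~$\theta$. For part~\enref{cclRj-2}, the subgroup $T_{m-6}/T_m$ is central in $T_{j-3,\gamma,m}$ by Lemma~\ref{lemma:Rj}\enref{Rj-3}, so $T_{j-3,\gamma,m}$ acts trivially on it and the conjugation action of $R_{j-3,\gamma,m}$ factors through $C$, which sends $v+T_m$ to $\theta v+T_m$; and multiplication by $(\theta-1)^{m-6}$ is an $\mathcal{O}$-linear bijection $T/T_6\to T_{m-6}/T_m$ (onto because $\mathfrak{p}^{m-6}T = T_{m-6}$, with kernel $\mathfrak{p}^{j+3} = T_6$) intertwining the two $\theta$-actions. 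The main obstacle I anticipate is not conceptual but the valuation bookkeeping for the $\gamma$-correction terms, especially confirming that the iterated $\gamma$-expressions vanish modulo $T_m$ for every admissible $m$ up to $k$, where the refined estimate $\eps = 3$ in the congruent case is precisely what makes the inequalities close.
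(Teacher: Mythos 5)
Your argument is correct, but it reaches parts \enref{cclRj-0} and \enref{cclRj-3} by a route genuinely different from the paper's. The paper proves the orbit description \enref{cclRj-0} by an iterative coset-correction: conjugation by $w+T_m$ shifts $v$ by $(1-\theta^3)w$ modulo $T_7$, so one first achieves the target up to an error in $T_7$ and then, adding $u\in T_s$ to~$w$, improves the error from $T_{s+3}$ to $T_{s+6}$ and iterates; the centraliser statement \enref{cclRj-3} is afterwards read off from the $C$-action description in \enref{cclRj-2}. You invert that logic: you compute the centraliser \enref{cclRj-3} directly, noting that for $t\notin T_{m-3}$ the leading term $(1-\theta^3)t$ has valuation strictly below both $j+m-3$ and the valuation of the $\gamma$-corrections, and then obtain \enref{cclRj-0} at once by orbit--stabiliser counting, since the orbit is contained in a set of cardinality $[T_3:T_m]=[T:T_{m-3}]$. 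This avoids the iteration entirely, at the cost of having to verify once and for all that the iterated term $\gamma\bigl(t\wedge\gamma(v\wedge\theta^3 t)\bigr)$ vanishes modulo~$T_m$ --- which at $m=k$ is tight and genuinely requires the refined value $\eps=3$ from Lemma~5.1 of~\cite{E-LG-N-OB}, a boundary case the paper's iteration sidesteps by never looking more than three valuation levels ahead at a time. You flagged that subtlety correctly; it would be worth writing out the two lines of arithmetic for both residues of $j$ mod~$3$. Parts \enref{cclRj-1} and \enref{cclRj-2} are handled essentially as in the paper.
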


\begin{proof}
\enref{cclRj-0}:
Since $T_m$ and the image of $\gamma$ lie in $T_j \subseteq T_7$ we have
\begin{align*}
{}^{(w+T_m)}[(v+T_m)\tau^3] & = [(w+T_m)*(v+T_m)*(-\theta^3 w+T_m)]\tau^3
\\ & \in \left( v+(1-\theta^3)w + T_7 \right) \tau^3 \, .
\end{align*}
Since $\mathfrak{p} = (\theta-1)\mathcal{O}$ and $3\mathcal{O} = \mathfrak{p}^6$ it follows that $(1-\theta^3)T = (1-\theta)^3 T = \mathfrak{p}^3 T = T_3$.
\item
So for each $v' \in v+T_3$ we find $w \in T$ with ${}^{(w+T_m)}[(v+T_m)\tau^3]  = (v''+T_m)\tau^3$ and $v'' \in v' + T_7$. If we now adjust $w$ by adding $u \in T_r$, then since $\gamma(T \wedge T_r) = T_{j+r-3+\varepsilon} \subseteq T_{r+6}$ we alter $v'$ by an element of $(1-\theta^3)u + T_{r+6}$. So if the error $v''-v'$ lies in  $T_{s+3}$, then with one correction we can reduce to an error in~$T_{s+6}$. Iterating reduces the error to an element of~$T_m$.
\item \enref{cclRj-1}:
Follows from \enref{cclRj-0}\@.
\ignore{
The description of the conjugacy class follows, since conjugation by $\tau^3$ has the same effect as conjugation by $-v+T_m \in T_{j-3,\gamma,m}$. Finally, the three residue classes coincide if $(\theta - 1)v \in T_3$, that is if $v \in T_2$.
} % end ignore
\qquad \enref{cclRj-2}:
The action factors by Lemma~\ref{lemma:Rj}\,\enref{Rj-3}\@. The second statement follows, since $C$ acts as multiplication by~$\theta$.
\ignore{
$v+T_m$ is central in $T_{j-3,\gamma,m}$, so the conjugacy class is as stated, and the length is $1$, $3$ or $9$\@. The length is one if $(\theta - 1)v \in T_m$, i.e.,\@ if $v \in T_{m-1}$; and if not then it is three if $(\theta^3-1)v \in T_m$. Since $3v \in T_m$ this is equivalent to $(\theta-1)^3v \in T_m$, i.e.,\@ $v \in T_{m-3}$. This also demonstrates~\enref{cclRj-3}\@.
} % end ignore
\item \enref{cclRj-3}: Follows from \enref{cclRj-2}, since $T_3/T_6$ is the subspace of $T/T_6$ consisting of elements fixed by $\tau^3$.
\end{proof}

\noindent
Let $d$ be the number of orbits\footnote{One easily verifies that $d=11$.} for the action of $C$~on $T/T_3$.
Pick $v_1,\ldots,v_d \in T$ such that the $v_i+T_3$ fom a set of orbit representatives for this action.

\begin{lemma}
\label{lem:Rjmaxels}
Every maximal elementary abelian subgroup of $R_{j-3,\gamma,m}$ is conjugate to precisely one of the following:
\begin{enumerate}
\item \label{enum:Rjmaxels-1}
$d$ rank four groups of the form $V_i = \langle (v_i + T_m)\tau^3 \rangle \times T_{m-3}/T_m$;
\item \label{enum:Rjmaxels-2}
$V_0 = T_{m-6}/T_m$ of rank six.
\end{enumerate}
If $U \leq V_i$ is not contained in $V_0$, then it is not conjugate to a subgroup of any other $V_j$.
\end{lemma}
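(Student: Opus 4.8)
The plan is to determine all maximal elementary abelian subgroups by first pinning down $\Omega_1(T_{j-3,\gamma,m})$ and then classifying an elementary abelian subgroup $E$ according to its image in the complement $C=\langle\tau\rangle$. First I would identify $\Omega_1(T_{j-3,\gamma,m})=T_{m-6}/T_m$: by Lemma~\ref{lemma:Rj}\,\enref{Rj-1} we have $(v+T_m)^3=3v+T_m$, which vanishes exactly when $v\in 3^{-1}T_m=T_{m-6}$ (using $3\mathcal{O}=\mathfrak{p}^6$). Writing $V_0:=T_{m-6}/T_m$, Lemma~\ref{lemma:Rj}\,\enref{Rj-3} says $V_0$ is central in $T_{j-3,\gamma,m}$, so it is elementary abelian of rank $\dim_{\f[3]}\mathfrak{p}^{m-6}/\mathfrak{p}^m=6$; and it is maximal, because an order-$3$ element outside $T_{j-3,\gamma,m}$ is of the form $(v+T_m)\tau^{3s}$ with $s\in\{1,2\}$ (Lemma~\ref{lemma:Rj}\,\enref{Rj-2}), acts on $V_0$ as multiplication by $\theta^{3s}$, and since $\theta^{3s}-1$ generates $\mathfrak{p}^3$ one has $(\theta^{3s}-1)T_{m-6}=T_{m-3}\not\subseteq T_m$, so no such element centralises $V_0$.

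Next I would classify $E$. Its image $\bar E$ in $R_{j-3,\gamma,m}/T_{j-3,\gamma,m}\cong C_9$ is elementary abelian, hence of order $1$ or $3$. If $\bar E=1$ then $E\le\Omega_1(T_{j-3,\gamma,m})=V_0$. Otherwise $\bar E=\langle\tau^3\rangle$, so $E$ contains an element with image $\tau^3$, necessarily $g=(v+T_m)\tau^3$ by Lemma~\ref{lemma:Rj}\,\enref{Rj-2}. Because $E$ is abelian, $E\cap T_{j-3,\gamma,m}\le\Omega_1(T_{j-3,\gamma,m})\cap C_{T_{j-3,\gamma,m}}(g)$, and by Lemma~\ref{lemma:cclRj}\,\enref{cclRj-3} together with $T_{m-3}\subseteq T_{m-6}$ this equals $T_{m-3}/T_m$. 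Since $(\theta^3-1)T_{m-3}=T_m$ the element $g$ centralises $T_{m-3}/T_m$, so $V(v):=\langle g\rangle\times T_{m-3}/T_m$ is elementary abelian of rank $4$; and as $|E|=3\,|E\cap T_{j-3,\gamma,m}|$ we get $E=\langle g\rangle\,(E\cap T_{j-3,\gamma,m})\le V(v)$. Thus $V(v)$ is the unique maximal elementary abelian containing $g$, and every maximal elementary abelian subgroup of $R_{j-3,\gamma,m}$ is either $V_0$ or equals some $V(v)$, $v\in T$.

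Finally I would sort the groups $V(v)$ out up to conjugacy. The subgroup $T_{m-3}/T_m$ is normal in $R_{j-3,\gamma,m}$ (it is $\mathfrak{p}$-power, hence $\theta$-stable, and central in $T_{j-3,\gamma,m}$), so ${}^yV(v)=\langle{}^yg\rangle\times T_{m-3}/T_m$; writing ${}^yg=(v''+T_m)\tau^3$ this is $V(v'')$, and one checks $V(v)=V(v')$ iff $v\equiv v'\pmod{T_{m-3}}$. Since $m\ge j\ge 7$ forces $T_{m-3}\subseteq T_3$, combining this with Lemma~\ref{lemma:cclRj}\,\enref{cclRj-1} gives
\[
V(v)\sim V(v')\iff (v+T_m)\tau^3\sim(v'+T_m)\tau^3\text{ in }R_{j-3,\gamma,m}\iff v+T_3\sim v'+T_3\text{ under }C .
\]
Hence the conjugacy classes of maximal elementary abelians meeting $C$ correspond to the $C$-orbits on $T/T_3$, a transversal being the $V_i=\langle(v_i+T_m)\tau^3\rangle\times T_{m-3}/T_m$, $1\le i\le d$; adding $V_0$, which has rank $6$ and so is conjugate to none of them, produces the stated list with no repetitions. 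For the last sentence: if $U\le V_i$ is not contained in $V_0$ then, since $V_i\cap T_{j-3,\gamma,m}=T_{m-3}/T_m\le V_0$, the group $U$ contains an element outside $T_{j-3,\gamma,m}$, which after replacing it by its square if necessary is $(v_i+x+T_m)\tau^3$ with $x\in T_{m-3}$; if some conjugate of $U$ were contained in $V_j$, then $V_j$ would contain an element outside $T_{j-3,\gamma,m}$, forcing $j\ne 0$ and, by Lemma~\ref{lemma:cclRj}\,\enref{cclRj-1}, $v_i+T_3=v_j+T_3$, i.e.\ $i=j$.

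The step that needs real care is the middle equivalence in the display: Lemma~\ref{lemma:cclRj}\,\enref{cclRj-1} controls the conjugacy of $(v+T_m)\tau^3$ and $(v'+T_m)\tau^3$ only modulo $T_3$, whereas the subgroup $V(v)$ sees $v$ only modulo $T_{m-3}$; these are reconciled by the inclusion $T_{m-3}\subseteq T_3$, and one has to keep straight which coset datum is conjugation-invariant. Everything else is routine bookkeeping with $\mathfrak{p}$-adic valuations and element orders.
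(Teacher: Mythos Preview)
Your proof is correct and follows essentially the same approach as the paper's: classify elementary abelians by their image in $C\cong C_9$, use Lemma~\ref{lemma:cclRj}\,\enref{cclRj-3} to bound the rank-four case, and invoke $m-3\geq 3$ (hence $T_{m-3}\subseteq T_3$) together with Lemma~\ref{lemma:cclRj}\,\enref{cclRj-1} to separate the $V_i$. The paper's proof is considerably terser---it compresses your entire argument into three sentences---so your version usefully makes explicit the maximality of $V_0$ and the equivalence $V(v)\sim V(v')\iff (v+T_m)\tau^3\sim(v'+T_m)\tau^3$; one tiny slip is that in the final clause Lemma~\ref{lemma:cclRj}\,\enref{cclRj-1} gives that $v_i+T_3$ and $v_j+T_3$ lie in the same $C$-orbit rather than being equal, but since the $v_i$ are orbit representatives this still forces $i=j$.
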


\begin{proof}
Any elementary abelian outside $T_{m-6}/T_m$ must contain some element of the form $(v+T_m)\tau^3$ and is therefore contained in $\langle (v+T_m)\tau^3 \rangle \times C_{T_{j-3,\gamma,m}}((v+T_m)\tau^3)$, that is $\langle (v+T_m)\tau^3 \rangle \times T_{m-3}/T_m$. Since $m \geq j \geq 7$ and therefore $m-3 \geq 3$, no two of the rank four elementary abelians in~\enref{Rjmaxels-1} are conjugate. This argument also demonstrates the last part.
\end{proof}

\begin{theorem}
\label{thm:Rj}
Up to equivalence of categories, the Quillen category of the skeleton group $R_{j-3,\gamma,m}$ is independent of $j,\gamma,m$.
\end{theorem}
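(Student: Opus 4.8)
The plan is to mimic the structure of the proof of Proposition~\ref{prop:main}: exhibit a skeleton of the Quillen category $\Aa_3(R_{j-3,\gamma,m})$ whose objects and morphisms admit a description independent of $j$, $\gamma$ and~$m$. First I would record, using Lemma~\ref{lem:Rjmaxels} together with Lemma~\ref{lemma:cclRj}, a complete list of conjugacy classes of elementary abelian subgroups. Every elementary abelian is contained in one of the maximal ones $V_0 = T_{m-6}/T_m$ (rank six) or $V_i = \langle(v_i+T_m)\tau^3\rangle \times T_{m-3}/T_m$ (rank four), so it suffices to enumerate $C$-orbits of subgroups of $T/T_6$ (for the subgroups of~$V_0$) together with the subgroups of each~$V_i$ and their fusion. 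By Lemma~\ref{lemma:cclRj}\enref{cclRj-2} the action of $R_{j-3,\gamma,m}$ on $V_0 = T_{m-6}/T_m$ is, via the isomorphism $v+T_6 \mapsto (\theta-1)^{m-6}v+T_m$, simply the action of $C$ on the fixed six-dimensional $\f[3]$-vector space $T/T_6$ by multiplication by~$\theta$; this is manifestly independent of $j,\gamma,m$. Likewise each~$V_i$, with its distinguished decomposition $\langle(v_i+T_m)\tau^3\rangle \times T_{m-3}/T_m$, is identified (again shifting by $(\theta-1)^{m-3}$ on the second factor) with $C_3 \times (T/T_3)$ carrying the $\theta$-action on the second factor, and the last sentence of Lemma~\ref{lem:Rjmaxels} shows that the only fusion among subgroups of the $V_i$ that needs to be tracked is fusion into~$V_0$, i.e.\ fusion of subgroups of $T_{m-3}/T_m \cong T/T_3 \hookrightarrow T/T_6$.

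Second, I would check that the normalizers (equivalently, the $R_{j-3,\gamma,m}$-fusion between subgroups of these maximal elementary abelians) also stabilize. The essential point is that all the relevant conjugating elements can be taken modulo an early term of the filtration: by Lemma~\ref{lemma:cclRj}\enref{cclRj-0} the conjugation action of $T_{j-3,\gamma,m}$ on an element $(v+T_m)\tau^3$ depends only on $v$ modulo~$T_3$, because $\gamma(T\wedge T) \subseteq T_j \subseteq T_7$; and conjugation by $\tau$ is the external $\theta$-action throughout. So the normalizer of $V_i$, and the induced action on~$V_i \cong C_3 \times T/T_3$, is computed entirely inside $C \ltimes (T/T_3)$, independently of $j,\gamma,m$; the same holds for $V_0$ using $C \ltimes (T/T_6)$. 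Combining, one gets an explicit finite category $\mathcal{C}$ — objects the conjugacy classes listed above, morphisms described through the $\theta$-action on $T/T_3$ and $T/T_6$ and the inclusion $T/T_3 \hookrightarrow T/T_6$ — together with, for each choice of $(j,\gamma,m)$ with $j \geq 7$ and $m \in \{j,\ldots,k\}$, a functor $\mathcal{C} \to \Aa_3(R_{j-3,\gamma,m})$ that is an equivalence onto a skeleton. Hence all these Quillen categories are equivalent to~$\mathcal{C}$, and therefore to each other.

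The main obstacle I anticipate is \emph{not} the existence of maps but showing the morphism \emph{sets} are exactly the same — that is, that no extra fusion or collapsing of morphisms appears for small~$m$ (near $m=j$) that would disappear for larger~$m$, and conversely. Concretely: a homomorphism between two of the chosen representatives is realized by conjugation in $R_{j-3,\gamma,m}$, and I must verify that the set of such homomorphisms is exactly the $C\ltimes(T/T_3)$-description above, with no correction terms coming from the nontrivial cocycle $\frac12\gamma$. The bound $j \geq 7$ is what makes this work — it forces $\gamma(T_{m-6}\wedge T)$, and more generally all the error terms produced by adjusting a conjugating element, to land in~$T_m$, so that conjugation on $\Omega_1$ is "untwisted". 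This is exactly the role played by the inequality $r_0 \geq 3m$ and the level estimates in Sections~\ref{CohSeqs}--\ref{section:change}; here it has to be re-established by the direct filtration estimates of Lemma~\ref{lemma:cclRj}, which I would organize into a single lemma asserting that every $R_{j-3,\gamma,m}$-fusion isomorphism between subgroups of $V_0 \cup \bigcup_i V_i$ is induced by an element of $C \ltimes (T/T_6)$ whose action is independent of the chosen lift.
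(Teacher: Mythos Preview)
Your proposal is correct and follows essentially the same route as the paper's proof: use Lemma~\ref{lem:Rjmaxels} to reduce to the maximal elementary abelians $V_0$ and the $V_i$, invoke Lemma~\ref{lemma:cclRj}\,\enref{cclRj-2} to see that conjugation on $V_0 \cong T/T_6$ is the fixed $C$-action, and then use Lemma~\ref{lemma:cclRj}\,\enref{cclRj-0}--\enref{cclRj-1} to show that fusion among subgroups of a given $V_i$ is governed by the $\theta$-action on $T/T_3$, independently of $j,\gamma,m$. The paper's write-up is terser---it simply checks that for $U \leq V_i$ with $U \not\subseteq V_0$ the set of Quillen morphisms $U \to V_i$ depends only on whether $\theta^r v_i \in v_i + T_3$ and on the $\theta^r$-action on $T_{m-3}/T_m$---whereas you package the same calculation into an explicit model category $\mathcal{C}$; but the content is the same.
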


\begin{proof}
$V_0$ is a normal subgroup, and Lemma~\ref{lemma:cclRj}\,\enref{cclRj-2}\ describes the conjugation action. So by the last part of Lemma~\ref{lem:Rjmaxels} it suffices to show that if $U \leq V_i$ is not contained in~$V_0$, then the set of homomorphisms $U \rightarrow V_i$ lying in the Quillen category is independent of $j,\gamma,m$.

So $U = \langle (v + T_m)\tau^3 \rangle \times A$ for some $A \leq T_{m-3}/T_m$ and some $v \in v_i + T_{m-3}$. Consider conjugation by $(u+T_m)\tau^r$: by Lemma~\ref{lemma:cclRj} this can only send $(v + T_m)\tau^3$ to an element of~$V$ if $\theta^r v_i$ lies in $v_i + T_3$; and if $\theta^r v_i$ does lie there, then by adjusting $u$ we may send $(v + T_m)\tau^3$ to any element of the form $(v'+T_m)\tau^3$ with $v' \in v_i + T_{m-3}$. Moreover, the restriction to~$A$ of conjugation by $(u+T_m)\tau^r$ only depends on~$r$.
\end{proof}

\ignore{
Write $\alpha = \theta - 1$, so $T = \alpha^{j-3}\mathcal{O}$ and $T_{\ell} = \alpha{\ell+j-3}\mathcal{O}$. Since $3\mathcal{O} = \alpha^6 \mathcal{O}$ it follows that $\mathcal{O}/\alpha^3\mathcal{O}$ is a three-dimensional $\f[3]$-vector space with basis $1 + \alpha^3 \mathcal{O},\alpha + \alpha^3 \mathcal{O},\alpha^2 + \alpha^3 \mathcal{O}$. Hence $T/T_3$ has basis $\alpha^{j-3} + T_3$, $\alpha^{j-2} + T_3$ and $\alpha^{j-1} + T_3$, with $\alpha^{j-1}+T_3$ being a basis of $T_2/T_3$. Observe that the action of $\langle \tau^3 \rangle$ on $T/T_3$ induced by the action of $C$~on $T$ is $v + T_3 \mapsto (1 + \alpha) v + T_3$, which has eleven orbits:
\begin{itemize}
\item Each element of $T_2/T_3$ constitutes an orbit of length one (three orbits);
\item The elements $\alpha^{j-2}+T_3$ and $2\alpha^{j-2}+T_3$ of $T_1/T_3$ have orbits of length three (two orbits);
\item Each element of the form $\lambda \alpha^{j-3} + \mu \alpha^{j-1} + T_3$ with $\lambda \in \f[3]^{\times}$ and $\mu \in \f[3]$ has length three (six orbits)\@.
\end{itemize}
Let us label these representatives of the length three orbits as $x_1,\ldots,x_8 \in T$, where $x_7,x_8$ are the two elements of $T_1$. Taking $x_9=\alpha^{j-1}$, $x_{10} = 2\alpha^{j-1}$ and $x_{11} = 0 \in T$, we have representatives of all eleven orbits.
} % end ignore

\subsection{The generalized quaternion groups}

\noindent
Let $G$ be a finite group, and $k$ a field of characteristic~$p$. Write
\[
\bar{H}^*(G,k) = \lim_{E \in \Aa_p(G)} H^*(E,k) \, .
\]
Quillen~\cite[Th.\@ 6.2]{Quillen:spectrum} proved that the induced homomorphism $\phi_G \colon H^*(G,k) \rightarrow \bar{H}^*(G,k)$ induces a homeomorphism between prime ideal spectra.

Our result shows that if $G_r$ is a coclass family, then $\bar{H}^*(G_r,k)$ is independent of~$r$. However this does \emph{not} mean that the map $\phi_{G_r}$ is an isomorphism for large~$r$.
The (generalised) quaternion groups $Q_{2^n}$ ($n \geq 3$) provide a good example.

The quaternion groups form a coclass sequence. The mod-2 cohomology ring $H^*(Q_{2^n},\f[2])$ is well-known\footnote{To our knowledge the earliest references are \cite[p.~253-4]{CartanEilenberg} for the additive structure and \cite[p.~244]{Munkholm:Circle} for the ring structure. By 1987, Rusin~\cite[p.~316]{Rusin:metacyclic} could quote the result without needing to give a reference.}:
\begin{align*}
H^*(Q_8,\f[2]) & \cong \f[2][x,y,z]/(x^2+xy+y^2,x^2y+xy^2) \\
H^*(Q_{2^n},\f[2]) & \cong \f[2][x,y,z]/(x^2+xy,y^3) \quad (n \geq 4) \, ,
\end{align*}
with $x,y \in H^1$ and $z \in H^4$.
Since $H^1(G,\f[2]) = \Hom(G,\f[2])$ and all order two elements lie in the Frattini subgroup, it follows that $x,y \in \ker(\phi_{Q_{2^n}})$. In fact $\bar{H}^*(Q_{2^n},\f[2]) \cong \f[2][z]$, since $z$ restricts to the central $C_2$ as $t^4 \in H^*(C_2,\f[2]) \cong \f[2][t]$: see Rusin's construction of~$z$ as a top Stiefel--Whitney class~\cite[p.~316]{Rusin:metacyclic}\@. So both $H^*(Q_{2^n},\f[2])$ and $\bar{H}^*(Q_{2^n},\f[2])$ are constant for $n \geq 4$, but $\phi_{Q_{2^n}}$ is never injective.

In fact one can demonstrate that $\phi_{Q_{2^n}}$ is never injective without even knowing the cohomology of $Q_{2^n}$.
Recall that a class $x \in H^n(G,k)$ is called \emph{essential} if its restriction to every proper subgroup $H < G$ vanishes: so if $G$ is not elementary abelian, then every essential class lies in the kernel of $\phi_G$. Now, Adem and Karagueuzian showed~\cite{AdKa:Ess} that $H^*(G,\f)$ is Cohen--Macaulay and has non-zero essential elements if and only if $G$ is a $p$-group and all order $p$ elements are central. As $Q_{2^n}$ satisfies this group-theoretic condition, it follows that $\ker(\phi_{Q_{2^n}}) \neq 0$.

% \bibliographystyle{abbrv}
% \bibliography{united.bib}

\begin{thebibliography}{10}

\bibitem{AdKa:Ess}
A.~Adem and D.~Karagueuzian.
\newblock Essential cohomology of finite groups.
\newblock {\em Comment. Math. Helv.}, 72(1):101--109, 1997.

\bibitem{AtiMac}
M.~F. Atiyah and I.~G. Macdonald.
\newblock {\em Introduction to commutative algebra}.
\newblock Addison-Wesley Publishing Co., Reading, Mass.-London-Don Mills, Ont.,
  1969.

\bibitem{Benson:I}
D.~J. Benson.
\newblock {\em Representations and Cohomology. {I}}.
\newblock Cambridge Studies in Advanced Math., vol.~30. Cambridge University
  Press, Cambridge, second edition, 1998.

\bibitem{CartanEilenberg}
H.~Cartan and S.~Eilenberg.
\newblock {\em Homological algebra}.
\newblock Princeton University Press, Princeton, N. J., 1956.

\bibitem{Cou}
M.~Couson.
\newblock {\em On the character degrees and automorphism groups of finite
  $p$-groups by coclass}.
\newblock PhD thesis, TU Braunschweig, 2013.

\bibitem{Eic06b}
B.~Eick.
\newblock Automorphism groups of 2-groups.
\newblock {\em J. Algebra}, 300(1):91--101, 2006.

\bibitem{EFe10b}
B.~Eick and D.~Feichtenschlager.
\newblock Computation of low-dimensional (co)homology groups for infinite
  sequences of {$p$}-groups with fixed coclass.
\newblock {\em Internat. J. Algebra Comput.}, 21(4):635--649, 2011.

\bibitem{quillen}
B.~Eick and D.~J. Green.
\newblock The {Q}uillen categories of $p$-groups and coclass theory.
\newblock {\em Israel J. Math.}, 206(1):183--212, 2015.
  
\bibitem{EickKing:IsoProblem}
B.~Eick and S.~King.
\newblock The isomorphism problem for graded algebras and its application to
  mod-$p$ cohomology rings of small $p$-groups.
\newblock Submitted, Mar. 2015.
\newblock arXiv:1503.04666 [math.RA].

\bibitem{ELG08}
B.~Eick and C.~Leedham-Green.
\newblock On the classification of prime-power groups by coclass.
\newblock {\em Bull. London Math. Soc.}, 40:274--288, 2008.

\bibitem{E-LG-N-OB}
B.~Eick, C.~R. Leedham-Green, M.~F. Newman, and E.~A. O'Brien.
\newblock On the classification of groups of prime-power order by coclass: the
  3-groups of coclass 2.
\newblock {\em Internat. J. Algebra Comput.}, 23(5):1243--1288, 2013.

\bibitem{Evens:book}
L.~Evens.
\newblock {\em The cohomology of groups}.
\newblock Oxford Univ.\@ Press, Oxford, 1991.

\bibitem{LeedhamGreenMcKay:book}
C.~R. Leedham-Green and S.~McKay.
\newblock {\em The structure of groups of prime power order}, volume~27 of {\em
  London Mathematical Society Monographs. New Series}.
\newblock Oxford University Press, Oxford, 2002.
\newblock Oxford Science Publications.

\bibitem{LeedhamGreenNewman}
C.~R. Leedham-Green and M.~F. Newman.
\newblock Space groups and groups of prime-power order. {I}.
\newblock {\em Arch. Math. (Basel)}, 35(3):193--202, 1980.

\bibitem{Munkholm:Circle}
H.~J. Munkholm.
\newblock {${\rm Mod}\,2$} cohomology of {$D2^{n}$} and its extensions by
  {$Z_{2}$}.
\newblock In {\em Conf. on {A}lgebraic {T}opology ({U}niv. of {I}llinois at
  {C}hicago {C}ircle, {C}hicago, {I}ll., 1968)}, pages 234--252. Univ. of
  Illinois at Chicago Circle, Chicago, Ill., 1969.

\bibitem{Quillen:spectrum}
D. Quillen.
\newblock The spectrum of an equivariant cohomology ring. {I, II}.
\newblock {\em Ann. of Math. (2)}, 94: 573--602, 1971.

\bibitem{Rusin:metacyclic}
D.~J. Rusin.
\newblock The mod-{$2$} cohomology of metacyclic {$2$}-groups.
\newblock {\em J. Pure Appl. Algebra}, 44(1-3):315--327, 1987.

\end{thebibliography}

% \ignore{

% } % end ignore

\end{document}